\newtheorem{Def}{Definition}[section]
\newtheorem{Prop}[Def]{Proposition}
\newtheorem{Theo}[Def]{Theorem}
\newtheorem{Lem}[Def]{Lemma}
\newtheorem{Koro}[Def]{Corollary}
 \DeclareMathOperator{\add}{add}
\DeclareMathOperator{\domdim}{dom.dim}
 \DeclareMathOperator{\End}{End}
 \DeclareMathOperator{\Ext}{Ext}
 \DeclareMathOperator{\gldim}{gl.dim}
 \DeclareMathOperator{\repdim}{rep.dim}
 \DeclareMathOperator{\stHom}{\underline{Hom}}
 \DeclareMathOperator{\rd}{rd}
 \DeclareMathOperator{\SE}{{\sf SE}}
 \DeclareMathOperator{\rigdim}{rig.dim}
 \DeclareMathOperator{\sgn}{sgn}
\newcommand{\defCategory}[2]{
  \newcommand{#1}{#2\defvariable}}
\newcommand{\defvariable}[2][]{
\if\relax\detokenize{#1}\relax  
\if\relax\detokenize{#2}\relax
    \else  ({#2})  \fi
    \else  ^{{\rm #1}}({#2})  \fi}
\defCategory{\C}{\mathscr{C}}
\defCategory{\K}{\mathscr{K}}
\defCategory{\D}{\mathscr{D}}
\defCategory{\wseq}{{\bf k}}
\newcommand{\kk}{\wseq{}}
\xdef\csname scr\x\endcsname{\noexpand\ensuremath{\noexpand\mathscr{\x}}}
\xdef\csname bb\x\endcsname{\noexpand\ensuremath{\noexpand\mathbb{\x}}}}
\def\modcat#1{{#1}\mbox{-}{\sf mod}}
\def\stmodcat#1{{#1}\mbox{-}\underline{\sf mod}}
\newcommand{\lra}{\longrightarrow}
\newcommand{\ra}{\rightarrow}
\newcommand{\numseq}[1]{{\bf #1}}
\newcommand{\Fb}{{\sf F}}
\renewcommand{\leq}{\leqslant}
\renewcommand{\geq}{\geqslant}
\newcommand{\rem}[1]{[#1]}
\title{Rigidity degrees of indecomposable modules over representation-finite self-injective algebras}
\author{Wei Hu, Xiaojuan Yin$^*$}
\date{}
\begin{document}
\maketitle

\renewcommand{\thefootnote}{\alph{footnote}}
\setcounter{footnote}{-1} \footnote{ $^*$ Corresponding author.}
\renewcommand{\thefootnote}{\alph{footnote}}
\setcounter{footnote}{-1} \footnote{2010 Mathematics Subject
Classification: 16G10; 16E10, 11A05.}
\renewcommand{\thefootnote}{\alph{footnote}}
\setcounter{footnote}{-1} \footnote{Keywords: Rigidity dimension; Rigidity degree;
Euclidean algorithm.}

\begin{abstract}
The rigidity degree of a generator-cogenerator determines the dominant dimension of its endomorphism algebra, and is closely related to a recently introduced homological dimension --- rigidity dimension. In this paper, we give explicit formulae for the rigidity degrees of all indecomposable modules over representation-finite self-injective algebras by developing combinatorial methods from the Euclidean algorithm.  As an application, the  rigidity dimensions of some algebras of types $A$ and $E$ are given.
\end{abstract}


\section{Introduction}
Dominant dimension and global dimension are two fundamental homological dimensions of finite dimensional algebras. Their interplay occurs in Auslander's definition of representation dimensions \cite{Auslander1970}, or more generally, Iyama's definition of higher representation dimensions \cite{Iyama2007}. For a given algebra $\Lambda$, its dominant and global dimension are denoted by $\domdim(\Lambda)$ and $\gldim(\Lambda)$, respectively. The $n$-representation dimension of $\Lambda$, denoted by $\repdim_n(\Lambda)$, is defined as follows
$$\repdim_n(\Lambda):=\inf\left\{\gldim\End_{\Lambda}(M)\left|\begin{array}{l}
	M\mbox{ is a generator-cogenerator and }\\
	\domdim \End_{\Lambda}(M)\geq n+1\end{array}\right.\right\}.$$
Recently, a somewhat dual version of representation dimension called {\em rigidity dimension}, denoted by $\rigdim$, is introduced in \cite{chen2021rigidity}
$$\rigdim(\Lambda):=\sup\left\{\domdim\End_{\Lambda}(M)\left|\begin{array}{l}
	M\mbox{ is a generator-cogenerator and }\\
	\gldim \End_{\Lambda}(M)<\infty \end{array}\right.\right\}.$$
Representation dimension is intended to measure how far an algebra is from being representation-finite, while rigidity dimension is introduced for a complete different purpose, it is intended to measure the quality of the best resolutions of $A$, and to compare homological invariants of $A$ {with} its resolutions. For instance, it is proved  in \cite{chen2021rigidity} that, if $\rigdim(A)=n$, then the Hochschild cohomology ring $HH^*(A)$ can have non-nilpotent homogenous generators at degree zero and degrees larger than $n-2$.

Little is known for the precise value of the rigidity dimension of a given algebra. It is even unknown whether this dimension is always finite, although its finiteness does follow if  Yamagata's conjecture (the dominant dimension of an algebra is bounded by a function that depends on the number of isomorphism classes of simple modules) holds true. All Morita algebras \cite{Fang2011,Kernel2013} with dominant dimension $2$ have rigidity dimension 2 and all group algebras have finite rigidity dimension. Chen and Xing \cite{chenxing} calculated the rigidity dimension of certain Hochschild extension of hereditary algebra of type $D$.


By M\"{u}ller's criterion \cite{Mueller1968a}, the dominant dimension of the endomorphism algebra of a generator-cogenerator $M$ is precisely the rigidity degree of $M$ plus two, where the {\em rigidity degree} of $M$ is the maximal non-negative integer $n$, or $\infty$, such that $\Ext_A^i(M, M)$ vanishes for all $1\leq i\leq n$. This tells us that rigidity dimension also depends highly on the Ext-structure of the module category.  Concerning infinite rigidity degree, M\"{u}ller \cite{Mueller1968a} proved that the Nakayama Conjecture is true for all Artin algebras if and only if for all Artin algebras, a generator-cogenerator $M$ with rigidity degree infinity would implies that $M$ projective. 

We  focus on the rigidity degrees of modules.  Our main results give formulae for rigidity degrees of all indecomposable modules over a representation-finite self-injective algebra over an algebraically closed field. Let $\Lambda$ be  a non-semisimple representation-finite self-injective algebra of type $(\Delta, u, s)$, where $\Delta$ is a Dynkin diagram with $r$ vertices.  Let $m_{\Delta}$ be the smallest positive integer such that all paths of length $m_{\Delta}$ in the mesh category $k(\Delta)$ are zero, and let $h_{\Delta}=m_{\Delta}+1$ be the Coxeter number. Then the stable Auslander-Reiten quiver of $\Lambda$ is $\mathbb{Z}\Delta/\langle\tau^n\phi\rangle$, where $n=u m_{\Delta}$ and $\phi$ is  an automorphism of $\mathbb{Z}\Delta$ with a fixed vertex. It is quite surprising to us that the rigidity degrees of indecomposable $\Lambda$-modules are closely related to certain combinatorics arising from the Euclidean algorithm for integers $h^*_{\Delta}$ and $n$ (see, Theorem \ref{theo-typeA}, Theorem \ref{theo-typeD} and Theorem \ref{theo-typeE} below), where $h^*_{\Delta}$ is $h_{\Delta}$ for type $A$ and  $h_{\Delta}/2$ for types $D$ and $E$.

\medskip
This paper is organized as follows.
In Section \ref{sect-preliminaries}, we recall some basic definitions and facts.
In Section \ref{sect-combinatorics}, we study combinatorics arising from Euclidean algorithm, introduce weighted Fibonacci sequences and develop Proposition \ref{prop-rm-range} which is crucial for the later proofs.
Explicit formulae of rigidity degrees of indecomposable modules {for} type $A, D$ and $E$ are given in Sections 4-6, respectively. The main results are Theorem \ref{theo-typeA}, \ref{theo-typeD} and \ref{theo-typeE}. Finally, in Section 7, the rigidity dimensions of certain algebras of types $A$ and $E$ are calculated.

\section{Preliminaries}\label{sect-preliminaries}
Throughout this paper, all algebras are connected, non-semisimple and finite dimensional  over an algebraically closed field $k$. We use $\mathbb{N}$ to represent the set of positive integers. For an algebra $\Lambda$, $\modcat{\Lambda}$ denotes the category of all left $\Lambda$-modules; $\stmodcat{\Lambda}$ denotes the stable module category of $\modcat{\Lambda}$. The syzygy and cosyzygy operators of $\Lambda$-mod are denoted by $\Omega_{\Lambda}$ and $\Omega^-_{\Lambda}$. Let $M$ be an $\Lambda$-module, we denote the smallest full subcategory of $\Lambda$-mod containing direct sums and direct summands of $M$ by $\add(M)$.

\begin{Def}\label{def-domdim}
  Let $\Lambda$ be an Artin algebra, and let
  $$0\ra {}_{\Lambda}\Lambda\ra I^0\ra I^1\ra\cdots $$
  be a minimal injective resolution of ${}_{\Lambda}\Lambda$. The {\em dominant dimension}, denoted by $\domdim \Lambda$, is defined to be the largest integer $d\geq 0$ (or $\infty$) such that $I^i$ is projective for all $i<d$ (or $\infty$). For a module $M$ over an algebra $\Lambda$, its {\em rigidity degree}, denoted by $\rd(M)$, is defined as the maximal integer $n>0$ (or $\infty$) such that $\Ext_{\Lambda}^i(M, M)$ vanishes for all $1\leq i\leq n$.
\end{Def}

The connection between rigidity degree and dominant dimension is provided in \cite{Mueller1968a} due to  M\"{u}ller.
\begin{Theo}[\cite{Mueller1968a}]
\label{relat-rig-dom}
 Let $\Lambda$ be an algebra and $M$ a generator-cogenerator of $\Lambda$-mod. Then the dominant dimension of the endomorphism algebra $\End _{\Lambda}(M)$ is precisely  $\rd(M)+2$.
\end{Theo}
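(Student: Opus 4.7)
Set $B = \End_\Lambda(M)$ and $F := \Hom_\Lambda(M, -)$. The plan is to identify the projective-injective left $B$-modules with $F(\add(D\Lambda))$, and then compare the minimal injective resolution of ${}_BB$ to $F$ applied to a minimal injective coresolution of $M$. For the identification: since $M$ is a generator, Morita theory makes $M$ a progenerator as right $B$-module, so $DM = \Hom_k(M, k)$ is an injective cogenerator of $\modcat{B}$ and $\add(DM) = \imodcat{B}$. Since $M$ is a cogenerator, $D\Lambda \in \add(M)$, and the hom-tensor adjunction gives $F(D\Lambda) \cong DM$. Composed with the standard equivalence $F\colon \add(M) \xrightarrow{\sim} \pmodcat{B}$, this shows that $F$ restricts to an equivalence $\add(D\Lambda) \xrightarrow{\sim} \pmodcat{B}\cap\imodcat{B}$.

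Take a minimal injective coresolution $0\to M\to E^0\to E^1\to\cdots$ in $\modcat{\Lambda}$; since $M$ is a cogenerator, each $E^i \in \add(D\Lambda)$. Applying $F$ gives a complex
$$0\to B\to F(E^0)\to F(E^1)\to F(E^2)\to\cdots$$
of projective-injective $B$-modules, automatically exact at $B$ and at $F(E^0)$ by left-exactness of $F$, with cohomology $\Ext^i_\Lambda(M,M)$ at $F(E^i)$ for $i\ge 1$. If $\rd(M)=n$, the sequence is exact at the $n+2$ consecutive positions $B, F(E^0), \ldots, F(E^n)$, and a routine induction pulling back injective envelopes through the sequence shows that the first $n+2$ terms $I^0, \ldots, I^{n+1}$ of the minimal injective resolution of ${}_BB$ are summands of the corresponding $F(E^i)$, hence projective. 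Therefore $\domdim(B) \ge n+2$.

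For the reverse inequality, suppose $\domdim(B) \ge n+2$. Then $I^0, \ldots, I^{n+1}$ are projective-injective, hence of the form $F(E^i)$ with $E^i \in \add(D\Lambda)$ by the identification above. Since $M$ is a progenerator, hence flat, as right $B$-module, the inverse equivalence $G := M \otimes_B -$ is exact, so applying $G$ to the exact sequence $0\to B\to I^0 \to\cdots\to I^{n+1}$ produces an exact sequence $0\to M\to E^0\to\cdots\to E^{n+1}$ in $\add(D\Lambda)$; extending it to a full injective coresolution of $M$ and reapplying $F$ identifies exactness at $F(E^i)$ for $1\le i\le n$ with the vanishing $\Ext^i_\Lambda(M,M)=0$, so $\rd(M)\ge n$. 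The main obstacle is the opening identification of $\pmodcat{B}\cap\imodcat{B}$ with $F(\add(D\Lambda))$: this is the single place where the generator and cogenerator hypotheses both intervene, and it is what converts the dominant-dimension question over $B$ into an $\Ext$-vanishing question over $\Lambda$. Once it is in place, the two inequalities become a matched pair of induction arguments tracking injective envelopes through $F$ and exactness through $G$.
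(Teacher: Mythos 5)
The paper offers no proof of this statement --- it is quoted directly from M\"uller's 1968 paper --- so there is nothing internal to compare against; I am judging your argument on its own. Your architecture is the standard one (and essentially M\"uller's): identify $\pmodcat{B}\cap\imodcat{B}$ with $F(\add(D\Lambda))$, then play the minimal injective resolution of ${}_BB$ against $F$ applied to an injective coresolution of $M$. Both comparison inductions, and the use of the exact functor $M\otimes_B-$ in the converse direction, are correct as written.

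The genuine gap is in the opening identification, and it is exactly at the step you yourself flag as the crux. For a generator $M$ that is not projective, $M$ is a finitely generated \emph{projective} right $B$-module (since $\Lambda\in\add(M)$ gives $M\cong\Hom_\Lambda(\Lambda,M)$ as a summand of $\Hom_\Lambda(M^n,M)=B^n$), but it is \emph{not} a progenerator: $\Hom_\Lambda(\Lambda,M)$ involves only the primitive idempotents of $B$ attached to the projective indecomposable summands of $M$, so $M_B$ generates $\modcat{B^{op}}$ only when $\add(M)=\add(\Lambda)$. Consequently $DM$ is injective but not an injective cogenerator, and $\add(DM)\subsetneq\imodcat{B}$ in general; already for $\Lambda=k[x]/(x^2)$ and $M=\Lambda\oplus k$, the Auslander algebra $B$ has two indecomposable injective left modules and only one lies in $\add(DM)$. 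What this breaks is precisely the inclusion $\pmodcat{B}\cap\imodcat{B}\subseteq F(\add(D\Lambda))$ needed for your second inequality ($\domdim B\geq n+2\Rightarrow\rd(M)\geq n$); the first inequality survives because it only uses that $F(D\Lambda)\cong DM$ is injective, which follows from $M_B$ being projective. The repair is short and does not go through $\imodcat{B}$ at all: if $F(X)$ is injective for an indecomposable $X\in\add(M)$, embed $X$ into its injective envelope $E(X)\in\add(D\Lambda)\subseteq\add(M)$; left exactness makes $F(X)\to F(E(X))$ a monomorphism, it splits because $F(X)$ is injective, and since $F$ is an equivalence from $\add(M)$ onto $\pmodcat{B}$ the splitting descends to $X\to E(X)$, so $X$ is injective. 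With that substitution your proof closes up.
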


\medskip
For a quiver $B$ without loops, the translation quiver  $\mathbb{Z}B$ introduced by Riedtmann \cite{Riedtmann1980a} is defined as follows.
The vertices are $(m,x)$ where $m$ is an integer and $x$ is a vertex of $B$. The arrows of the quiver $\mathbb{Z}B$ are as follows. Each arrow $x\ra y$ in $B$ forms arrows $(m,x)\ra (m,y)$ and $(m,y)\ra (m-1,x)$ for all integers $m$. Note that if the underlying graph of $B$ is a tree without multiple edges, then the translation quiver $\mathbb{Z}B$ is independent of the orientation in $B$. For example, if $B$ is the quiver $x\ra y$, then $\mathbb{Z}B$ is as follows.
\begin{center}
	\begin{tikzpicture}
	\pgfmathsetmacro{\ul}{0.7};
	\draw[->] (1.1*\ul,4.9*\ul)--(1.9*\ul,4.1*\ul);
	\draw[->] (2.1*\ul,4.1*\ul)--(2.9*\ul,4.9*\ul);
	\draw[->] (3.1*\ul,4.9*\ul)--(3.9*\ul,4.1*\ul);
	\draw[->] (4.1*\ul,4.1*\ul)--(4.9*\ul,4.9*\ul);
	\draw[->] (5.1*\ul,4.9*\ul)--(5.9*\ul,4.1*\ul);
	\fill[fill=black]  (5*\ul,5*\ul)  node[] {${\scriptscriptstyle (1,x)}$};
	\fill[fill=black]  (6*\ul,4*\ul)  node[] {${\scriptscriptstyle (1,y)}$};
		\fill[fill=black]  (3*\ul,5*\ul)  node[] {${\scriptscriptstyle (2,x)}$};
	\fill[fill=black]  (4*\ul,4*\ul)  node[] {${\scriptscriptstyle (2,y)}$};
		\fill[fill=black]  (1*\ul,5*\ul)  node[] {${\scriptscriptstyle (3,x)}$};
	\fill[fill=black]  (2*\ul,4*\ul)  node[] {${\scriptscriptstyle (3,y)}$};
		\fill[fill=black]  (7*\ul,4.5*\ul)  node[] {$\cdots$};
		\fill[fill=black]  (0*\ul,4.5*\ul)  node[] {$\cdots$};
	\end{tikzpicture}
\end{center}
The {\em translation} $\tau$ on $\mathbb{Z}B$ sends $(m, x)$ to $(m+1, x)$. An automorphism $g$ of $\mathbb{Z}B$ is a quiver automorphism which commutes with the translation $\tau$. For a vertex $v$ in $\mathbb{Z}B$, denote by $v^-$ the set of immediate predecessors of $v$ and by $v^+$ the set of immediate successors of $v$. A group $G$ of automorphisms of $\mathbb{Z}B$ is admissible if each orbit of $G$ meets $\{v\}\cup v^+$ in at most one vertex and meets $\{v\}\cup v^-$ in at most one vertex for each vertex $v$ in $\mathbb{Z}B$. In this case, the orbits of $G$ form a translation quiver $\mathbb{Z}B/G$: the vertices are the orbits of $G$, and there is an arrow from $Gv$ to $Gw$ precisely when $Gw\cap v^+\neq\emptyset$, and the translation is given by sending $Gv$ to $G\tau(v)$ which is the same as $\tau Gv$ since all elements of $G$ commute with $\tau$.

\medskip
For a Dynkin quiver $\Delta$ ($A_r, D_r, E_6, E_7, E_8$) with $r$ vertices, let $m_{\Delta}$ be the smallest positive integer such that all paths of length $m_{\Delta}$ in the mesh category $k(\Delta)$ are zero, and let $h_{\Delta}=m_{\Delta}+1$ be the Coxeter number.  It is well-known that $m_{\Delta}=r$ for type $A$, $2r-3$ for type $D$, and $11,17,29$ for type $E_6, E_7$ and $E_8$ respectively.

\medskip
Let $\Lambda$ be an indecomposable representation-finite self-injective non-semisimple algebra, and let $\Gamma_s(\Lambda)$ be the stable AR-quiver of $\Lambda$. It is well-known that there is a Dynkin quiver $\Delta$ such that $\Gamma_s(\Lambda)=\mathbb{Z}\Delta/G$ for some admissible group $G$ (see \cite{Riedtmann1980a}). The group $G$ is generated by an automorphism $\tau^n\phi$, where $n$ is a positive integer and $\phi$ is an automorphism of $\mathbb{Z}\Delta$ with a fixed vertex. Let $s$ be the order of $\phi$. Then $(\Delta, n/m_{\Delta}, s)$ is called the type of $\Lambda$. The complete list of types is as  follows (\cite{BLR}, see also \cite{Asashiba1999}).

\medskip
\begin{itemize}
	\item $(A_{r}, n/r, 1)$, $r,n\in\mathbb{N}$;
	\item $(A_{2p+1}, u, 2)$, $p, s\in \mathbb{N}$;
	\item $(D_r, u, 1)$, $r, u\in\mathbb{N}, r\geq 4$;
	\item $(D_{3w},u/3,1)$, $w, u\in\mathbb{N}, w\geq 2,  3\nmid u$;
	\item $(D_r, u, 2)$, $r,u\in\mathbb{N}, r\geq 4$;
	\item $(D_4, u, 3)$, $u\in\mathbb{N}$;
	\item $(E_r, u, 1)$, $r=6,7,8$, $u\in\mathbb{N}$;
	\item $(E_6, u, 2)$, $u\in\mathbb{N}$.
\end{itemize}

\medskip
Let $\pi: \mathbb{Z}\Delta\lra \mathbb{Z}\Delta/G$ be the natural morphism of translation quivers sending each vertex $v$ to its orbit $Gv$ under $G$. Identifying $\mathbb{Z}\Delta/G$ with $\Gamma_s(\Lambda)$, $\pi$ can be viewed as a morphism from $\mathbb{Z}\Delta$ to $\Gamma_s(\Lambda)$.  The automorphism $\Omega$ of $\Gamma_s(\Lambda)$
induced by the syzygy functor $\Omega_{\Lambda}: \stmodcat{\Lambda}\lra\stmodcat{\Lambda}$ lifts to an automorphism $\omega$ of $\mathbb{Z}\Delta$, that is, $\Omega\pi=\pi\omega$. For each indecomposable $\Lambda$-module $X$, we define
$$H^-(X):=\{Y\in \Gamma_s(\Lambda)|\stHom_{\Lambda}(Y, X)\neq 0\}.$$
$$H^+(X):=\{Y\in \Gamma_s(\Lambda)|\stHom_{\Lambda}(X, Y)\neq 0\}$$
There are natural isomorphisms
$$\stHom_{\Lambda}(Y,\tau X)\cong D\Ext^1_{\Lambda}(X, Y)\cong D\stHom_{\Lambda}(\Omega_{\Lambda}X, Y),$$
where $\tau$ is the Auslander-Reiten translation.  This implies that $H^+(\Omega_{\Lambda}X)=H^-(\tau X)$. For a vertex $x\in \mathbb{Z}\Delta$, one can define combinatorially a set of vertices $H^-(x)$  such that $\pi$ induces a bijection between $H^-(x)$ and $H^-(\pi(x))$.  For the precise construction of $H^-(x)$, we refer to   \cite[4.4.2]{Iyama2005c}. $H^+(x)$ can be then defined as $H^-(\omega^{-1}\tau x)$. For each vertex $x$ of $\mathbb{Z}\Delta$, let $X:=\pi(x)$ be the corresponding indecomposable $A$-module. We define
$$\SE(X):=\{i>0|\Ext_A^i(X,X)\neq 0\}.$$
Then it is easy to see that
$$\begin{aligned}
\SE(X) & =\{i>0|\Ext_A^i(X,X)\neq 0\}\\
& = \{i>0|\stHom_A(\Omega^iX,X)\neq 0\}\\
& = \{i>0|\Omega^iX\in H^-(X)\}
\end{aligned}$$
Similarly we define
$$\SE_G(x):=\{i>0|G\omega^i(x)\cap H^-(x)\neq\emptyset\}$$
Then by definition $\rd(X)=\inf\SE(X)-1$ if $\SE(X)$ is not empty and $\rd(X)=\infty$ otherwise. The notion $\rd_G(x)$ can be similarly defined.

\medskip
The following will be used frequently in our later proofs.
\begin{Lem}
\label{lem-SE-basic-property}
Keep the notations above. The following hold.

$(1)$. $\SE_G(x)=\SE(X)$.

$(2)$.  $\SE_G(x)=\SE_G(\tau(x))$, $\SE_G(x)=\SE_G(\omega (x))$.

$(3)$. $\rd_{G}(x)=\rd(X)$.
\end{Lem}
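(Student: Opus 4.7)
The plan is to prove (1) directly by chasing both definitions through the covering map $\pi$, to read off (3) as an immediate formal consequence, and to reduce (2) to the observation that both $\tau$ and $\omega$ commute, in the appropriate sense, with $G$ and with the combinatorial construction $x\mapsto H^-(x)$.

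For (1), the two ingredients recorded in the excerpt are the intertwining relation $\Omega\pi=\pi\omega$ and the bijection $\pi\colon H^-(x)\to H^-(\pi(x))$ induced by the covering map. Writing $X=\pi(x)$ and iterating, one gets $\Omega^iX=\pi(\omega^ix)$ for every $i>0$. Using the characterisation of $\SE(X)$ recorded just before the lemma, $i\in\SE(X)$ if and only if $\Omega^iX\in H^-(X)=\pi(H^-(x))$; since two vertices of $\mathbb{Z}\Delta$ have the same image under $\pi$ precisely when they lie in the same $G$-orbit, this translates to $G\omega^i(x)\cap H^-(x)\neq\emptyset$, which is exactly the defining condition for $i\in\SE_G(x)$. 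Hence $\SE_G(x)=\SE(X)$, and part (3) is then an immediate reformulation, since $\rd(X)=\inf\SE(X)-1$ and $\rd_G(x)=\inf\SE_G(x)-1$ by definition.

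For (2), I will invoke the following general principle. If $\sigma$ is an automorphism of $\mathbb{Z}\Delta$ that commutes with $\omega$, satisfies $\sigma G=G\sigma$ as sets of bijections on vertices, and obeys $\sigma(H^-(x))=H^-(\sigma x)$ for every vertex $x$, then $\SE_G(\sigma x)=\SE_G(x)$; this is because
\[
G\omega^i(\sigma x)\cap H^-(\sigma x)=\sigma G\omega^i(x)\cap \sigma H^-(x)=\sigma\bigl(G\omega^i(x)\cap H^-(x)\bigr),
\]
which is nonempty for exactly the same set of integers $i$. For $\sigma=\tau$, commutation with $\omega$ is built into the definition of a translation-quiver automorphism, and $\tau G=G\tau$ follows from $G=\langle\tau^n\phi\rangle$ together with the fact that $\phi$, being a quiver automorphism of $\mathbb{Z}\Delta$, commutes with $\tau$. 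For $\sigma=\omega$, the identity $\omega G=G\omega$ is precisely the assertion that $\omega$ descends to the automorphism $\Omega$ on the quotient $\mathbb{Z}\Delta/G$, i.e., the given relation $\Omega\pi=\pi\omega$.

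The main remaining point is the naturality clause $\sigma(H^-(x))=H^-(\sigma x)$ for $\sigma\in\{\tau,\omega\}$; this is the only step that requires unwinding the explicit construction of $H^-(x)$ from \cite[4.4.2]{Iyama2005c}. Since that construction refers only to paths in the mesh category $k(\Delta)$ and to the translation $\tau$, both of which are preserved by any quiver automorphism of $\mathbb{Z}\Delta$, I expect the verification to be essentially formal once the construction is recalled, but it is the single point in the argument where genuine care is required.
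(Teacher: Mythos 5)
Your argument is correct and follows essentially the same route as the paper, whose entire proof consists of the two observations that every element of $G$ commutes with $\tau$ and $\omega$ and that $\pi H^-(x)=H^-(X)$; you have simply made the orbit-chasing through $\pi$ explicit. The naturality $\sigma(H^-(x))=H^-(\sigma x)$ that you flag as the one delicate point can, if you prefer, be sidestepped entirely by deducing $(2)$ from $(1)$ together with the isomorphisms $\Ext^i_{\Lambda}(X,X)\cong\Ext^i_{\Lambda}(\tau X,\tau X)\cong\Ext^i_{\Lambda}(\Omega_{\Lambda}X,\Omega_{\Lambda}X)$, which hold because $\tau$ and $\Omega_{\Lambda}$ are auto-equivalences of the stable module category of the self-injective algebra $\Lambda$.
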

\begin{proof}
Note that all elements in $G$ commute with $\tau$ and $\omega$. Since $\pi H^{-}(x)=H^{-}(X)$, the lemma follows easily.
\end{proof}

This lemma reduces the question of finding rigidity degrees  to a combinatorial problem on $\mathbb{Z}\Delta$ together with the information of $G$.
It is closely related to the combinatorics we will develop in the next section.

\section{Combinatorics from the Euclidean algorithm}\label{sect-combinatorics}
Give two positive integers $m$ and $n$, we denote by $\rem{m}_{n}$ the remainder of $m$ modulo $n$, that is, a non-negative integer less than $n$ which is  congruent to $m$ modulo $n$. The combinatorics we need in this paper is to determine the range of $\rem{rm}_{n}$
for  positive integers $r$. The main result of this section is Proposition \ref{prop-rm-range} which will be used frequently in later proofs.

\subsection{Weighted Fibonacci sequences}
Given a sequence $\numseq{a}: a_s, a_{s+1}, \cdots, a_r$ of positive integers ($s\leq r$), we define recursively
\begin{equation*}
\Fb_l(\mathbf{a}):=\left\{
    \begin{array}{ll}
      0, & l=s-2; \\
      1, & l=s-1; \\
      a_{l}\Fb_{l-1}(\mathbf{a})+\Fb_{l-2}(\mathbf{a}), & s\leq l\leq r.
    \end{array}
  \right.
\end{equation*}
The sequence $\Fb_{s-2}(\numseq{a}), \Fb_{s-1}(\numseq{a}), \cdots, \Fb_r(\numseq{a})$ is called the {\em weighted Fibonacci sequence} with {\em weight sequence} $\numseq{a}$. This can be written in matrix form as
$$\begin{bmatrix}
\Fb_{l-1}(\mathbf{a})\\
\Fb_l(\mathbf{a})
\end{bmatrix}=\begin{bmatrix}
0&1\\
1& a_l
\end{bmatrix}\begin{bmatrix}
\Fb_{l-2}(\mathbf{a})\\
\Fb_{l-1}(\mathbf{a})
\end{bmatrix},$$
where $s\leq l\leq r$. For simplicity, we write
$$A(a):=\left[\begin{array}{cc}0&1\\1&a\end{array}\right]$$
for each number $a$. For each sequence $\numseq{a}: a_{s}, a_{s+1},\cdots,a_{r}$, the sequence $a_{s+1},\cdots,a_{r}$ obtained by removing the starting number is denoted by $\numseq{a'}$.  Then it is straightforward to check that
$$\begin{bmatrix}
\Fb_{l}(\numseq{a'})& \Fb_l(\mathbf{a})\\
\Fb_{l+1}(\numseq{a'}) & \Fb_{l+1}(\mathbf{a})
\end{bmatrix}=A(a_{l+1})\cdots A(a_s).$$
Note that if we shift the sequence, say a sequence $\numseq{b}$ is obtained from $\numseq{a}$ by setting $b_{i}=a_{i+t}$ for some fixed $t$ and for all $i$, then $\Fb_{i}(\numseq{b})=\Fb_{i+t}(\numseq{a})$ for all $i$.

\subsection{The range of the remainders}
For the rest of this section, we fix two positive integers $m$ and $n$, and set
$$s_{-1}:=m, \quad s_{0}:=n$$
The Euclidean algorithm gives rise to a sequence of equations:
\begin{align*}
 s_{-1} & =k_0s_0+s_1\\
    s_0 &=k_{1}s_{1}+s_{2}\\
    & \cdots\cdots\\
    s_d&=k_{d+1}s_{d+1}+s_{d+2}\\
     s_{d+2}&=0
\end{align*}
where $0<s_i<s_{i-1}$ for all $1\leq i\leq d+1$.
Here we get a sequence of positive integers
$$k_1, \cdots, k_{d+1}$$
which is called the {\em weight sequence} of $m$ and $n$, denoted by $\wseq{m,n}$, $d+1$ is called the length of the weight sequence, denoted by $|\wseq{m,n}|$, the sequence $s_1, \cdots, s_{d+1}$ is called the {\em remainder sequence} of $m,n$.

\medskip
For the rest of this section, we write $\kk$ for $\wseq{m,n}$ for simplicity.

\medskip
Note that the equations given by the Euclidean algorithm can also be written as matrices multiplications, namely, for $0\leq l\leq d+1$,
$$\begin{bmatrix} s_{l+1} & s_l\end{bmatrix}\cdot A(k_l)=\begin{bmatrix} s_l & s_{l-1}\end{bmatrix}$$
$$ A(k_l)\cdot\begin{bmatrix}s_{l-1}\\-s_l\end{bmatrix}=\begin{bmatrix}-s_{l}\\s_{l+1}\end{bmatrix}$$

\medskip
The rest of this section is devoted to studying $\rem{rm}_{n}$
for positive integers $r$.  Since $s_{-1}=m,\, s_{0}=n$, and  $s_{-1}=k_{0}s_{0}+s_{1}$,  we have $\rem{rm}_{n}= \rem{rs_{1}}_{s_{0}}$.
The following lemma deals with the case that $r$ is $\Fb_i(\numseq{k})$ for some $i$.

\begin{Lem}
\label{lemma-Bls1-rem}
Keep the notations above. For $1\leq l\leq d+2$, we have $$\Fb_{l-1}(\numseq{k})s_1\equiv (-1)^{l-1}s_l\pmod{s_0}.$$
\end{Lem}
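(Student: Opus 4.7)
The plan is to prove the congruence by induction on $l$, with the Fibonacci recurrence at each step feeding off the corresponding Euclidean equation.

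First, the base cases $l=1$ and $l=2$ follow immediately from the definitions. For $l=1$, the convention $\Fb_0(\numseq{k}) = 1$ reduces the claim to the trivial identity $s_1 \equiv s_1 \pmod{s_0}$. For $l=2$, we have $\Fb_1(\numseq{k}) = k_1$, and the first Euclidean equation $s_0 = k_1 s_1 + s_2$ gives $k_1 s_1 \equiv -s_2 \pmod{s_0}$, matching the sign $(-1)^1$.

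For the inductive step, assume the congruence has been verified at indices $l-1$ and $l$, with $2 \leq l \leq d+1$; we deduce it at $l+1$. Applying the recurrence $\Fb_l(\numseq{k}) = k_l \Fb_{l-1}(\numseq{k}) + \Fb_{l-2}(\numseq{k})$, multiplying through by $s_1$, and substituting the two hypotheses gives
$$\Fb_l(\numseq{k})\, s_1 \equiv k_l (-1)^{l-1} s_l + (-1)^{l-2} s_{l-1} = (-1)^{l-2}\bigl(s_{l-1} - k_l s_l\bigr) \pmod{s_0}.$$
The $l$-th Euclidean equation $s_{l-1} = k_l s_l + s_{l+1}$ collapses the parenthesized expression to $s_{l+1}$, so $\Fb_l(\numseq{k}) s_1 \equiv (-1)^{l-2} s_{l+1} = (-1)^l s_{l+1} \pmod{s_0}$, which is precisely the statement at $l+1$.

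The ranges of validity align without friction: the Fibonacci recurrence applies for $1 \leq l \leq d+1$, and the Euclidean equations used live in the same range, so the induction carries through up to $l+1 = d+2$ as required. There is no genuine obstacle here; the main item to watch is sign arithmetic, which is why it was worth checking $l=2$ separately to anchor the pattern $(-1)^{l-1}$. As an alternative, one could iterate the matrix identity $A(k_l)\bigl[\begin{smallmatrix}s_{l-1}\\-s_l\end{smallmatrix}\bigr] = \bigl[\begin{smallmatrix}-s_l\\s_{l+1}\end{smallmatrix}\bigr]$ recorded earlier to obtain $A(k_l)\cdots A(k_1)\bigl[\begin{smallmatrix}s_0\\-s_1\end{smallmatrix}\bigr] = (-1)^l\bigl[\begin{smallmatrix}s_l\\-s_{l+1}\end{smallmatrix}\bigr]$, then read off the second coordinate via the expression of $A(k_l)\cdots A(k_1)$ in terms of $\Fb_\bullet(\numseq{k})$ and $\Fb_\bullet(\numseq{k'})$ from Section 3.1; but the direct induction above is the cleanest route.
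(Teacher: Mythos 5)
Your proof is correct, and it is essentially the paper's argument in scalar form: the paper telescopes the matrix identity $A(k_l)\cdots A(k_1)\bigl[\begin{smallmatrix}-s_0\\ s_1\end{smallmatrix}\bigr]=(-1)^l\bigl[\begin{smallmatrix}-s_l\\ s_{l+1}\end{smallmatrix}\bigr]$ and reads off the entry involving $\Fb_{l-1}(\numseq{k})$, which is exactly your two-term induction combining the Fibonacci recurrence with the Euclidean equations (the alternative you mention at the end is the paper's actual proof). The sign bookkeeping and index ranges in your write-up check out.
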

\begin{proof}
This is clear for $l=1$. Now assume that $l>1$. Recall that we denote by $\numseq{k}'$ the sequence $k_{2}, \cdots, k_{d+1}$. Then
\begin{align*}
\begin{bmatrix}
\Fb_{l-1}(\numseq{k'})& \Fb_{l-1}(\numseq{k})\\
\Fb_{l}(\numseq{k'})& \Fb_{l}(\numseq{k})
\end{bmatrix}
\begin{bmatrix}
-s_{0}\\ s_{1}
\end{bmatrix}&=A(k_{l})\cdots A(k_{1})
\begin{bmatrix}
-s_{0}\\ s_{1}
\end{bmatrix}\\
&= -A(k_{l})\cdots A(k_{2})
\begin{bmatrix}
-s_{1}\\ s_{2}
\end{bmatrix}\\
&=\cdots = (-1)^{l}
\begin{bmatrix}
-s_{l}\\ s_{l+1}
\end{bmatrix}.
\end{align*}
It follows that
$\Fb_{l-1}(\numseq{k})s_{1}-\Fb_{l-1}(\numseq{k'})s_{0}=(-1)^{l-1}s_{l}$ and the lemma follows.
 \end{proof}
A particular case of the above lemma is
$$\Fb_{d+1}(\numseq{k})s_{1}\equiv (-1)^{d+1}s_{d+2}\equiv 0 \pmod{s_{0}}.$$
This means that we only need to consider positive integers less than $\Fb_{d+1}(\numseq{k})$. The next lemma expresses such integers $r$ as linear combinations of $\Fb_i(\numseq{k})$, which is useful when we consider the remainder $\rem{rs_{1}}_{s_{0}}$.
\begin{Lem}
\label{lemma-rs1-rem}
Let $1\leq l\leq d+1$, and $0<r\leq \Fb_l(\numseq{k})$. Then $r$ can be written as
$$r=\sum_{i=1}^l\lambda_i\Fb_{i-1}(\numseq{k})$$
such that $0\leq \lambda_i\leq k_i$ for all $1\leq i\leq l$ and $\lambda_1>0$. Furthermore, we have
$$rs_{1}\equiv \sum_{i=1}^l(-1)^{i-1}\lambda_is_i\pmod{s_0}.$$
\end{Lem}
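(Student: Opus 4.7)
The plan is to establish both parts simultaneously by induction on $l$, building up the coefficients from the top index downward. For the base case $l=1$, note that $\Fb_0(\numseq{k})=1$ and $\Fb_1(\numseq{k})=k_1$, so the unique choice $\lambda_1:=r\in\{1,\dots,k_1\}$ satisfies all requirements, and the asserted congruence reduces to the tautology $rs_1\equiv \lambda_1 s_1\pmod{s_0}$.

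For the inductive step, assume the result for $l-1$, and let $0<r\leq \Fb_l(\numseq{k})$. I split into two cases. If $r\leq \Fb_{l-1}(\numseq{k})$, I apply the induction hypothesis directly to $r$ with index $l-1$ and set $\lambda_l:=0$. Otherwise $\Fb_{l-1}(\numseq{k})<r\leq \Fb_l(\numseq{k})$, and I choose $\lambda_l$ to be the unique integer satisfying
$$0<r-\lambda_l\Fb_{l-1}(\numseq{k})\leq \Fb_{l-1}(\numseq{k}),$$
which exists because the interval has length exactly $\Fb_{l-1}(\numseq{k})$. The lower bound $\lambda_l\geq 1$ is immediate from $r>\Fb_{l-1}(\numseq{k})$. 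The upper bound $\lambda_l\leq k_l$ follows from combining $r\leq \Fb_l(\numseq{k})=k_l\Fb_{l-1}(\numseq{k})+\Fb_{l-2}(\numseq{k})$ with the inequality $\Fb_{l-2}(\numseq{k})\leq \Fb_{l-1}(\numseq{k})$ (valid for $l\geq 2$): this forces $\lambda_l\Fb_{l-1}(\numseq{k})<r\leq (k_l+1)\Fb_{l-1}(\numseq{k})$, hence $\lambda_l\leq k_l$. Applying the induction hypothesis to $r'':=r-\lambda_l\Fb_{l-1}(\numseq{k})\in(0,\Fb_{l-1}(\numseq{k})]$ with index $l-1$ yields coefficients $\lambda_1,\dots,\lambda_{l-1}$ with $\lambda_1>0$ and $0\leq \lambda_i\leq k_i$; assembling them with the chosen $\lambda_l$ gives the desired decomposition of $r$.

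The congruence is then almost automatic from the representation: Lemma \ref{lemma-Bls1-rem} gives $\Fb_{i-1}(\numseq{k})s_1\equiv (-1)^{i-1}s_i\pmod{s_0}$ for every $1\leq i\leq l\leq d+1$, so multiplying by $\lambda_i$ and summing produces $rs_1\equiv \sum_{i=1}^l(-1)^{i-1}\lambda_i s_i\pmod{s_0}$.

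The only non-routine point is verifying that the top coefficient $\lambda_l$ constructed in the second case lies in $\{1,\dots,k_l\}$; this is the step where the specific recursion $\Fb_l(\numseq{k})=k_l\Fb_{l-1}(\numseq{k})+\Fb_{l-2}(\numseq{k})$ is actually used, so care is needed to check the boundary behaviour when $l=2$ (where $\Fb_{l-3}(\numseq{k})=0$ can allow $\Fb_0(\numseq{k})=\Fb_1(\numseq{k})$ if $k_1=1$). Beyond this bookkeeping, everything proceeds by direct induction.
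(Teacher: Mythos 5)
Your proof is correct and follows essentially the same route as the paper: induction on $l$ using the recursion $\Fb_l=k_l\Fb_{l-1}+\Fb_{l-2}$ to peel off the top coefficient, followed by Lemma \ref{lemma-Bls1-rem} for the congruence. The only difference is bookkeeping — you place the remainder in the interval $(0,\Fb_{l-1}]$ instead of $[0,\Fb_{l-1})$, which lets you avoid the paper's separate treatment of the case $q=0$ at the cost of the (correctly verified) bound $\lambda_l\leq k_l$.
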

\begin{proof}
For simplicity, we write $\Fb_{i}$ for $\Fb_{i}(\numseq{k})$ for all $i$.  We use induction on $l$. If $l=1$, then each $0<r\leq \Fb_{1}$ is of the form $\lambda_{1}\Fb_{0}$ with $0<\lambda_{1}\leq k_{1}$.
Assume now that $l>1$. Since
$$\Fb_{l}=k_{l}\Fb_{l-1}+\Fb_{l-2},$$
each $0<r\leq \Fb_{l}$ is of the form $r=p\Fb_{l-1}+q$ with $0\leq p\leq k_{l}$ and $0\leq q<\Fb_{l-1}$.

If $q=0$, then $p>0$ since $r>0$. By induction, we can assume that $\Fb_{l-1}=\sum_{i=1}^{l-1}\lambda_{i}\Fb_{i-1}$ with $0<\lambda_{1}\leq k_{1}$ and $0\leq \lambda_{i}\leq k_{i}$ for $2\leq i\leq l-1$. Thus
$$r=p\Fb_{l-1}=(p-1)\Fb_{l-1}+\sum_{i=1}^{l-1}\lambda_{i}\Fb_{i-1}=\sum_{i=1}^{l}\lambda_{i}\Fb_{i-1}, \quad (\lambda_{l}:=p-1)$$
as desired.

If $q>0$, then $q<\Fb_{l-1}$, and we can assume that $q=\sum_{i=1}^{l-1}\lambda_{i}\Fb_{i-1}$ with $0<\lambda_{1}\leq k_{1}$ and $0\leq \lambda_{i}\leq k_{i}$ for $2\leq i\leq l-1$. Defining $\lambda_{l}:=p$, we have $r=\sum_{i=1}^{l}\lambda_{i}\Fb_{i-1}$ with the desired properties.

Together with Lemma \ref{lemma-Bls1-rem}, the rest of the lemma follows.
\end{proof}

The following lemma justifies the expression of $r$ as a linear combination of $\Fb_{i}(\numseq{k})$.
\begin{Lem}
\label{lemma-rem-range}
Suppose that $1\leq l\leq d+1$ and $0\leq \lambda_i\leq k_i$ for all $1\leq i\leq l$. If
   $\lambda_1>0$, then
$$0\leq \sum_{i=1}^l(-1)^{i-1}\lambda_is_i\leq s_0.$$
Moreover, setting $r=\sum_{i=1}^{l}\lambda_{i}\Fb_{i-1}$, either of the equalities holds if and only if $r=\Fb_{d+1}$.
\end{Lem}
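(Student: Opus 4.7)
I will regard $T_{l}:=\sum_{i=1}^{l}(-1)^{i-1}\lambda_{i}s_{i}$ as a linear function of $(\lambda_{1},\dots,\lambda_{l})$ with coefficients $(-1)^{i-1}s_{i}$. Since $l\le d+1$, all of $s_{1},\dots,s_{l}$ are strictly positive, so these coefficients are nonzero and $T_{l}$ is \emph{strictly} monotone in every $\lambda_{i}$ (increasing for odd $i$, decreasing for even $i$). Over the box $\{0\le\lambda_{i}\le k_{i},\ \lambda_{1}\ge 1\}$ both extrema are therefore attained at unique corners: the maximum at $\lambda_{i}=k_{i}$ for odd $i$ and $\lambda_{i}=0$ for even $i$; the minimum at $\lambda_{1}=1$, $\lambda_{i}=0$ for odd $i\ge 3$, and $\lambda_{i}=k_{i}$ for even $i$.

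To evaluate these extrema I will rewrite the Euclidean relation $s_{i-1}=k_{i}s_{i}+s_{i+1}$ as the telescoping identity $k_{i}s_{i}=s_{i-1}-s_{i+1}$. Summing over the odd indices $1,3,\dots$ up to $l$ collapses the maximum to $s_{0}-s_{l+1}$ when $l$ is odd and to $s_{0}-s_{l}$ when $l$ is even, and summing over the even indices $2,4,\dots$ shows that the minimum equals $s_{l}$ when $l$ is odd and $s_{l+1}$ when $l$ is even. Since $s_{l}>0$ for $l\le d+1$ and $s_{l+1}\ge 0$, this yields $0\le T_{l}\le s_{0}$, with equality forcing $s_{l+1}=0$, i.e.\ $l=d+1$: specifically, $T_{l}=s_{0}$ only in the ``max'' corner with $l=d+1$ odd, and $T_{l}=0$ only in the ``min'' corner with $l=d+1$ even.

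For the ``if and only if'' statement I will work on both sides. In the forward direction I plug the two extremal $\lambda$-configurations into $r=\sum\lambda_{i}\Fb_{i-1}(\numseq{k})$ and use the analogous Fibonacci telescoping $k_{i}\Fb_{i-1}=\Fb_{i}-\Fb_{i-2}$: in both parities of $l=d+1$ the sum collapses to $\Fb_{l}=\Fb_{d+1}$ (the odd-$l$ sum telescopes to $\Fb_{l}-\Fb_{-1}$; in the even-$l$ case the extra $\lambda_{1}=1$ cancels the $-\Fb_{0}=-1$ produced by the telescope). For the converse, multiplying Lemma~\ref{lemma-Bls1-rem} by $\lambda_{i}$ and summing gives $rs_{1}\equiv T_{l}\pmod{s_{0}}$ for any admissible $\lambda$, so if $r=\Fb_{d+1}$ then Lemma~\ref{lemma-Bls1-rem} applied with $l=d+2$ yields $rs_{1}\equiv 0\pmod{s_{0}}$, hence $T_{l}\equiv 0\pmod{s_{0}}$, which combined with $0\le T_{l}\le s_{0}$ forces $T_{l}\in\{0,s_{0}\}$.

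The main bookkeeping obstacle is keeping the parity cases straight: which of $s_{l}$ and $s_{l+1}$ appears in the extremal value, which odd/even telescoping sum to use, and making sure that exactly one of the two equalities is attainable in each parity (and only when $l=d+1$). Once that is organized, everything reduces to the two telescoping identities $k_{i}s_{i}=s_{i-1}-s_{i+1}$ and $k_{i}\Fb_{i-1}=\Fb_{i}-\Fb_{i-2}$ together with the modular identity inherited from Lemma~\ref{lemma-Bls1-rem}.
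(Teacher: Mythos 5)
Your argument is correct and follows essentially the same route as the paper: bound the alternating sum $\sum_{i=1}^l(-1)^{i-1}\lambda_is_i$ by its values at the extremal corners of the box and collapse those via the telescoping identities $k_is_i=s_{i-1}-s_{i+1}$ and $k_i\Fb_{i-1}=\Fb_i-\Fb_{i-2}$, with equality forcing $s_{l+1}=0$, i.e.\ $l=d+1$. Your treatment of the converse direction, deducing that the sum lies in $\{0,s_0\}$ from the congruence $rs_1\equiv\sum_{i=1}^l(-1)^{i-1}\lambda_is_i\pmod{s_0}$ and Lemma \ref{lemma-Bls1-rem} with $l=d+2$, is in fact slightly more explicit than the paper's, which simply asserts the equivalence of the extremal coefficient configuration with $r=\Fb_{d+1}$.
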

\begin{proof}
Clearly, we have
$$\lambda_{1}s_{1}-\sum_{i\mbox{ \small is even}}\lambda_{i}s_{i}\leq  \sum_{i=1}^l(-1)^{i-1}\lambda_is_i\leq \sum_{i\mbox{ \small is odd}}\lambda_{i}s_{i}.$$
Since $\lambda_{i}\leq k_{i}$ for all $i$ and $\lambda_{1}>0$, we further get
$$s_{1}-\sum_{i\mbox{ \small is even}}k_{i}s_{i}\leq  \sum_{i=1}^l(-1)^{i-1}\lambda_is_i\leq \sum_{i\mbox{ \small is odd}}k_{i}s_{i}.$$
Now the term of left hand side is
$s_{1}-(s_{1}-s_{3})-(s_{3}-s_{5})-\cdots =s_{t}\geq 0$
with $t-1$ the maximal even integer $\leq l$. Thus, the equality on the left hand side holds if and only if $\lambda_{1}=1$, $s_{t}=0$, $\lambda_{i}=k_{i}$ for all even $i$ and zero for all the other odd $i$, that is, $t=d+2$ and $r=\Fb_{0}+k_{2}\Fb_{1}+k_{4}\Fb_{3}+\cdots+k_{d+1}\Fb_{d}=\Fb_{d+1}$.

The right hand side is
$(s_{0}-s_{2})+(s_{2}-s_{4})+\cdots =s_{0}-s_{t}\leq s_{0},$
where $t-1$ is the maximal odd integer $\leq l$.  Again, the equality on this side holds if and only if $s_{t}=0$, $\lambda_{i}=k_{i}$ for all odd $i$ and zero for even $i$. This happens precisely when $t=d+2$ and $r=k_{1}\Fb_{0}+k_{3}\Fb_{2}+\cdots+k_{d+1}\Fb_{d}=\Fb_{d+1}$.
\end{proof}

\medskip
 An immediate consequence of Lemma \ref{lemma-rs1-rem} and \ref{lemma-rem-range} is that, for $0<r<\Fb_{d+1}(\numseq{k})$, the remainders $\rem{rs_{1}}_{s_{0}}$ are non-zero and pairwise distinct. Particularly $\Fb_{d+1}(\kk)\leq s_0$.   The following proposition is technically crucial in our later proofs.

\begin{Prop}
\label{prop-rm-range}
Let $m, n$ be positive integers, and let $\kk=\wseq{m,n}$ be the weight sequence with $|\kk|=d+1$. Suppose that $0<l\leq d+1$, and  $0<r\leq \Fb_l(\mathbf{k})$ (respectively, $0<r<\Fb_l(\mathbf{k})$) when $l\leq d$ is odd (respectively, $l$ is even or $l=d+1$). Then

\begin{itemize}
\item[$(1)$] $\rem{rm}_{n}\geq s_l$ and $\rem{(r-1)m}_{n}\leq n-s_l$.
\item[$(2)$]  If $l$ is odd, then  $\rem{rm}_{n}=s_l$ if and only if $r=\Fb_{l-1}(\mathbf{k})$, and  $\rem{(r-1)m}_{n}=n-s_l$  if and only if $d$ is even, $l=d+1$ and $r=\Fb_{d+1}(\mathbf{k})-\Fb_d(\mathbf{k})+1$.
\item[$(3)$] If $l$ is even, then $\rem{rm}_{n}= s_l$ if and only if $d$ is odd, $l=d+1$ and $r=\Fb_{d+1}(\mathbf{k})-\Fb_d(\mathbf{k})$, and  $\rem{(r-1)m}_{n}=n-s_l$ if and only if $r=\Fb_{l-1}(\mathbf{k})+1$.
\end{itemize}
\end{Prop}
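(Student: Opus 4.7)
My plan is to induct on $l$ and reduce each claim to a telescoping estimate on a signed integer sum. As a preliminary, I would apply Lemma \ref{lemma-rs1-rem} to expand $r=\sum_{i=1}^{l}\lambda_{i}\Fb_{i-1}(\mathbf{k})$ with $\lambda_{1}\geq 1$ and $0\leq\lambda_{i}\leq k_{i}$, and invoke Lemma \ref{lemma-rem-range} to promote the congruence $rs_{1}\equiv S\pmod{s_{0}}$ to the integer equality $\rem{rm}_{n}=S$, where $S:=\sum_{i=1}^{l}(-1)^{i-1}\lambda_{i}s_{i}$. This upgrade is legitimate because the range assumptions on $r$ force $r<\Fb_{d+1}(\mathbf{k})$, ruling out the endpoint cases $S\in\{0,s_{0}\}$ of Lemma \ref{lemma-rem-range}. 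The same procedure applied to $r-1$ yields $\rem{(r-1)m}_{n}=T$ for the analogous signed sum $T$, and all subsequent manipulations will rest on the Euclidean identity $k_{i}s_{i}=s_{i-1}-s_{i+1}$.

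The case analysis splits according to whether the final sign $(-1)^{l-1}$ on $\lambda_{l}s_{l}$ cooperates with the bound sought. In the easy combinations (lower bound with $l$ odd, upper bound with $l$ even) I would keep $\lambda_{1}s_{1}$ at its floor $s_{1}$, push the opposite-parity coefficients to their ceiling $k_{i}$, and let the middle $k_{i}s_{i}$'s telescope to $s_{1}-s_{l}$ or $s_{0}-s_{l}$. This gives $\rem{rm}_{n}\geq s_{l}$ or $\rem{(r-1)m}_{n}\leq n-s_{l}$ immediately; chasing the tight configuration through one further telescoping identifies it as $r=\Fb_{l-1}(\mathbf{k})$ or $r=\Fb_{l-1}(\mathbf{k})+1$, matching the equality clauses of (2) and (3).

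The hard combinations (lower bound with $l$ even, upper bound with $l$ odd) have the wrong sign on $\lambda_{l}s_{l}$, so I would split on whether $\lambda_{l}\leq k_{l}-1$ or $\lambda_{l}=k_{l}$ (and similarly for the coefficient $\mu_{l}$ in the expansion of $r-1$). In the strict sub-case an extra $\pm s_{l+1}$ margin surfaces in the telescoped estimate, already giving strict inequality except when $s_{l+1}=0$, which forces $l=d+1$; tracing the tight configuration in this boundary case pins down $r=\Fb_{d+1}(\mathbf{k})-\Fb_{d}(\mathbf{k})$ or $r=\Fb_{d+1}(\mathbf{k})-\Fb_{d}(\mathbf{k})+1$, matching the exceptional clauses. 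In the saturated sub-case $\lambda_{l}=k_{l}$, the remainder $r':=r-k_{l}\Fb_{l-1}(\mathbf{k})$ lies in $(0,\Fb_{l-2}(\mathbf{k}))$ and $S=\rem{r'm}_{n}-k_{l}s_{l}$, so I would invoke the inductive hypothesis at parameter $l-2$ together with the arithmetic bound $s_{l-2}\geq (k_{l}+1)s_{l}$ (obtained from $s_{l-2}=k_{l-1}s_{l-1}+s_{l}\geq s_{l-1}+s_{l}\geq (k_{l}+1)s_{l}$) to push $S$ back to at least $s_{l}$.

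The main obstacle will be managing this inductive branch cleanly: I need to verify that $r'$ lands in precisely the range to which the inductive hypothesis at level $l-2$ applies, and then check that the inductive equality clauses cannot fire for such $r'$ (both demanding $l-2=d+1$, which contradicts $l\leq d+1$), so that the composite bound is strict and no spurious equality case appears. The base cases $l=1$ and $l=2$ are direct, since the saturated sub-case $\lambda_{l}=k_{l}$ is vacuous when $l-2\leq 0$, leaving only the easy telescoping estimates.
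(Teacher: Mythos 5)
Your proposal is correct, and it shares the paper's overall scaffolding: expand $r$ and $r-1$ via Lemma \ref{lemma-rs1-rem}, use Lemma \ref{lemma-rem-range} to identify $\rem{rm}_n$ with the signed sum $\sum_i(-1)^{i-1}\lambda_is_i$, and settle the two ``easy'' sign-compatible bounds by telescoping $k_is_i=s_{i-1}-s_{i+1}$, reading off the equality configurations $r=\Fb_{l-1}(\kk)$ and $r=\Fb_{l-1}(\kk)+1$. Where you genuinely diverge is in the two ``hard'' bounds. The paper handles these in one shot, without induction: since $r-1\neq\Fb_l(\kk)$ (resp.\ $r\neq\Fb_l(\kk)$), the coefficient vector must deviate from the extremal pattern at some index $t\leq l$, and extracting the resulting $-s_t$ from the telescoped sum gives $\rem{(r-1)m}_n\leq s_0-s_{l+1}-s_t\leq s_0-s_l$ directly, with the equality analysis forcing $s_{l+1}=0$, $t=l$ and the top coefficient equal to $k_l-1$. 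You instead split on whether the top coefficient saturates: the non-saturated branch reproduces the paper's margin $s_{l+1}+s_l$, while the saturated branch peels off $k_l\Fb_{l-1}(\kk)$ and recurses to level $l-2$, closed up by the estimate $s_{l-2}\geq(k_l+1)s_l$ and the observation that the level-$(l-2)$ equality clauses would require $l-2=d+1$. Both are sound; the paper's deviation-index trick avoids induction entirely and is shorter, whereas your recursion makes the role of the continued-fraction structure (peeling one convergent at a time) more transparent, at the cost of having to verify that $r'$ lands in the correct range and that no spurious equality cases propagate --- which you do correctly, including the vacuousness of the saturated branch for $l\leq 2$ and the trivial case $r=1$ that the paper also disposes of separately.
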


\begin{proof}
Let $s_{-1}=m, s_0=n$, $s_1, \cdots, s_{d+1}$ be the remainder sequence. For simplicity, we write $\Fb_{l}$ for $\Fb_{l}(\mathbf{k})$ throughout this proof.

Note that $\rem{rm}_n=\rem{rs_1}_{s_0}$ for all integers $r$. The proposition is clear for the case $r=1$. In the following, we assume that $1<r<\Fb_{d+1}$. By Lemma \ref{lemma-rs1-rem}, $r$ and $r-1$ can be written as
$$r=\sum_{i=1}^{l}\lambda_{i}\Fb_{i-1},\quad r-1=\sum_{i=1}^{l}\mu_{i}\Fb_{i-1}$$
with $0<\lambda_{1}, \mu_{1}\leq k_{1}$ and $0 \leq \lambda_{i},\mu_{i}\leq k_{i}$ for all $2\leq i\leq l$. Since $r<\Fb_{d+1}$, by Lemma \ref{lemma-rs1-rem} and \ref{lemma-rem-range}, we see that
$$\rem{rs_{1}}_{s_{0}}=\sum_{i=1}^{l}(-1)^{i-1}\lambda_{i}s_{i}, \quad \rem{(r-1)s_{1}}_{s_{0}}=\sum_{i=1}^{l}(-1)^{i-1}\mu_{i}s_{i}.$$
Let us consider the case $l$ is odd. Then
\begin{align*}
\rem{rs_{1}}_{s_{0}}&= \sum_{i=1}^{l}(-1)^{i-1}\lambda_{i}s_{i}\geq s_{1}-\sum_{i\mbox{ \small is even}} \lambda_{i}s_{i}\\
&\geq s_{1}-k_{2}s_{2}-k_{4}s_{4}-\cdots-k_{l-1}s_{l-1}\\
&=s_{1}-(s_{1}-s_{3})-\cdots-(s_{l-2}-s_{l})=s_{l}
\end{align*}
The equality holds if and only if $\lambda_{1}=1$, $\lambda_{i}=k_{i}$ for all even $i$, and $\lambda_{i}=0$ for all odd $i>1$. Equivalently
$$r=\sum_{i=1}^{l}\lambda_{i}\Fb_{i-1}=\Fb_{0}+k_{2}\Fb_{1}+k_{4}\Fb_{3}+\cdots+k_{l-1}\Fb_{l-2}= \Fb_{l-1}$$
Next, we consider $r-1$, which is
$$r-1=\sum_{i=1}^{l}\mu_{i}\Fb_{i-1}.$$
Note that $\Fb_{l}=k_{l}\Fb_{l-1}+k_{l-2}\Fb_{l-3}+\cdots+k_{3}\Fb_{2}+k_{1}\Fb_{0}$.  It can not happen that $\mu_{i}=0$ for all even $i$ and $\mu_{i}=k_{i}$ for all odd $i$ since $r-1\neq \Fb_{l}$. Let $t\leq l$ be such that $\mu_{t}>0$ when $t$ is even or $\mu_{t}<k_{t}$ when $t$ is odd.
\begin{align*}
\rem{(r-1)s_{1}}_{s_{0}} &=\mu_{1}s_{1}-\mu_{2}s_{2}+\cdots+\mu_{l}s_{l}\\
&\leq k_{1}s_{1}+k_{3}s_{3}+\cdots+k_{l}s_{l}-s_{t}\\
&=(s_{0}-s_{2})+(s_{2}-s_{4})+\cdots+(s_{l-1}-s_{l+1})-s_{t}\\
& =s_{0}-s_{l+1}-s_{t}\\
&\leq s_{0}-s_{t}\leq s_{0}-s_{l}
\end{align*}
The equality holds if and only if $s_{l+1}=0$, $t=l$, $\mu_{l}=k_{l}-1$, $\mu_{i}=k_{i}$ for all odd $i<l$, and $\mu_{i}=0$ for all even $i$. Equivalently, $l=d+1$, and
$r-1=k_{1}\Fb_{0}+k_{3}\Fb_{2}+\cdots+(k_{d+1}-1)\Fb_{d}=\Fb_{d+1}-\Fb_{d}$,
that is, $r=\Fb_{d+1}-\Fb_{d}+1$.

Assume now that $l$ is even and $r<\Fb_{l}$. In this case $$\Fb_{l}=k_{l}\Fb_{l-1}+k_{l-2}\Fb_{l-3}+\cdots+k_{2}\Fb_{1}+\Fb_{0}.$$
It cannot happen that $\lambda_{1}=1$, $\lambda_{i}=0$ for all odd $i>1 $ and $\lambda_{i}=k_{i}$ for all even $0<i\leq l$ since $r\neq \Fb_{l}$. Let $0<t\leq l$ be such that $\lambda_{t}>1$ when $t=1$, $\lambda_{t}>0$ when $t>1$ is odd or $\lambda_{t}<k_{t}$ when $t$ is even. Then

\begin{align*}
\rem{rs_{1}}_{s_{0}}&=\sum_{i=1}^{l}(-1)^{i-1}\lambda_{i}s_{i}\\
&\geq s_{1}-k_{2}s_{2}-k_{4}s_{4}-\cdots-k_{l}s_{l}+s_{t}\\
&=s_{l+1}+s_{t}\geq s_{t}\geq s_{l}
\end{align*}
The equality holds if and only if $s_{l+1}=0$, $t=l$, $\lambda_{l}=k_{l}-1$, $\lambda_{i}=0$ for all odd $1<i<l$,  $\lambda_{i}=k_{i}$ for all even $0<i<l$, and $\lambda_{1}=1$. Equivalently, $l=d+1$, and $r=\Fb_{0}+k_{2}\Fb_{1}+k_{4}\Fb_{3}+\cdots+(k_{d+1}-1)\Fb_{d}=\Fb_{d+1}-\Fb_{d}$. Finally
\begin{align*}
\rem{(r-1)s_{1}}_{s_{0}}&=\sum_{i=1}^{l}(-1)^{i-1}\mu_{i}s_{i}\leq \sum_{i\mbox{ \small is odd}}k_{i}s_{i}\\
&=s_{0}-s_{2}+s_{2}-s_{4}+\cdots+s_{l-1}-s_{l}\\
&=s_{0}-s_{l}.
\end{align*}
The equality holds if and only if $\mu_{i}=0$ for all even $i$ and $\mu_{i}=k_{i}$ for all odd $i$, that is,
$r-1=k_{1}\Fb_{0}+k_{3}\Fb_{2}+\cdots+k_{l-1}\Fb_{l-2}=\Fb_{l-1}$.
\end{proof}

\section{Rigidity degrees of indecomposable modules: type $A$}\label{rig-nakayama}

In this section,  we shall present explicit formulae for rigidity degrees of indecomposable modules over self-injective algebras of type $A$. Suppose that $\Lambda$ is a representation-finite self-injective algebra of type $A$. Its stable Auslander-Reiten quiver is $\mathbb{Z}A_{m-1}/G$, where $G$ is an admissible automorphism group of $\mathbb{Z}A_{m-1}$. Note that the Coxeter number is $(m-1)+1=m$ in this case.

We coordinate the  translation quiver $\mathbb{Z}A_{m-1}$  as follows.
\begin{center}
\begin{tikzpicture}
\pgfmathsetmacro{\ul}{0.7};
\draw[->] (1.1*\ul,4.9*\ul)--(1.9*\ul,4.1*\ul);
\draw[->] (2.1*\ul,3.9*\ul)--(2.9*\ul,3.1*\ul);
\draw[dotted] (3.1*\ul,2.9*\ul)--(3.9*\ul,2.1*\ul);
\draw[->] (4.1*\ul,1.9*\ul)--(4.9*\ul,1.1*\ul);
\draw[->] (5.1*\ul,0.9*\ul)--(5.9*\ul,0.1*\ul);
\draw[->] (3.1*\ul,4.9*\ul)--(3.9*\ul,4.1*\ul);
\draw[->] (4.1*\ul,3.9*\ul)--(4.9*\ul,3.1*\ul);
\draw[dotted] (5.1*\ul,2.9*\ul)--(5.9*\ul,2.1*\ul);
\draw[->] (6.1*\ul,1.9*\ul)--(6.9*\ul,1.1*\ul);
\draw[->] (7.1*\ul,0.9*\ul)--(7.9*\ul,0.1*\ul);
\draw[->] (5.1*\ul,4.9*\ul)--(5.9*\ul,4.1*\ul);
\draw[->] (6.1*\ul,3.9*\ul)--(6.9*\ul,3.1*\ul);
\draw[dotted] (7.1*\ul,2.9*\ul)--(7.9*\ul,2.1*\ul);
\draw[->] (8.1*\ul,1.9*\ul)--(8.9*\ul,1.1*\ul);
\draw[->] (9.1*\ul,0.9*\ul)--(9.9*\ul,0.1*\ul);
\draw[->] (2.1*\ul,4.1*\ul)--(2.9*\ul,4.9*\ul);
\draw[->] (3.1*\ul,3.1*\ul)--(3.9*\ul,3.9*\ul);
\draw[->] (4.1*\ul,4.1*\ul)--(4.9*\ul,4.9*\ul);
\draw[->] (5.1*\ul,3.1*\ul)--(5.9*\ul,3.9*\ul);
\draw[->] (5.1*\ul,1.1*\ul)--(5.9*\ul,1.9*\ul);
\draw[->] (6.1*\ul,0.1*\ul)--(6.9*\ul,0.9*\ul);
\draw[->] (7.1*\ul,1.1*\ul)--(7.9*\ul,1.9*\ul);
\draw[->] (8.1*\ul,0.1*\ul)--(8.9*\ul,0.9*\ul);
\fill[fill=black]  (1*\ul,5*\ul)  node[] {${\scriptscriptstyle (3,m-1)}$};
\fill[fill=black]  (2*\ul,4*\ul) node[] {${\scriptscriptstyle (3,m-2)}$};
\fill[fill=black]  (3*\ul,3*\ul) node[] {${\scriptscriptstyle (3,m-3)}$};
\fill[fill=black]  (4*\ul,2*\ul) node[] {${\scriptscriptstyle (3,3)}$};
\fill[fill=black]  (5*\ul,1*\ul) node[] {${\scriptscriptstyle (3,2)}$};
\fill[fill=black]  (6*\ul,0*\ul) node[] {${\scriptscriptstyle (3,1)}$};
\fill[fill=black]  (3*\ul,5*\ul) node[] {${\scriptscriptstyle (2,m-1)}$};
\fill[fill=black]  (4*\ul,4*\ul) node[] {${\scriptscriptstyle (2,m-2)}$};
\fill[fill=black]  (5*\ul,3*\ul) node[] {${\scriptscriptstyle (2,m-3)}$};
\fill[fill=black]  (6*\ul,2*\ul) node[] {${\scriptscriptstyle (2,3)}$};
\fill[fill=black]  (7*\ul,1*\ul) node[] {${\scriptscriptstyle (2,2)}$};
\fill[fill=black]  (8*\ul,0*\ul) node[] {${\scriptscriptstyle (2,1)}$};
\fill[fill=black]  (5*\ul,5*\ul) node[] {${\scriptscriptstyle (1,m-1)}$};
\fill[fill=black]  (6*\ul,4*\ul) node[] {${\scriptscriptstyle (1,m-2)}$};
\fill[fill=black]  (7*\ul,3*\ul) node[] {${\scriptscriptstyle (1,m-3)}$};
\fill[fill=black]  (8*\ul,2*\ul) node[] {${\scriptscriptstyle (1,3)}$};
\fill[fill=black]  (9*\ul,1*\ul) node[] {${\scriptscriptstyle (1,2)}$};
\fill[fill=black]  (10*\ul,0*\ul) node[] {${\scriptscriptstyle (1,1)}$};
\draw[dotted] (0,2.5*\ul)--(2.2*\ul,2.5*\ul);
\draw[dotted] (8.5*\ul,2.5*\ul)--(11*\ul,2.5*\ul);
\end{tikzpicture}
\end{center}
There are two classes of types:

\medskip
(1) $(A_{m-1}, n/(m-1), 1)$, $m,n\in\mathbb{N}, m\geq 2$ , and

(2) $(A_{2p+1}, u, 2)$, $p, u\in\mathbb{N}$.

\medskip
The main result of this section is the following theorem.
\begin{Theo}\label{theo-typeA}
	Suppose that $\Lambda$ is a representation-finite self-injective algebra of type $(A_{m-1},u,s)$. Set $$M=m,\quad N=u(m-1),\mbox{ when }s=1, \mbox{ and }$$
	$$M=u(m-1)+m/2,\quad  N=2u(m-1), \mbox{ when }s=2.$$
	Let $\kk =\wseq{M,N}$ be the weight sequence, and let $s_i$, $\Fb_i(\kk)$, $-1\leq i\leq |\kk|$ be the remainder sequence and the corresponding weighted Fibonacci sequence respectively. Suppose that  $X$ is an indecomposable $\Lambda$-module corresponding to the vertex $(x,t)$ in $\mathbb{Z}A_{m-1}$ with $t\leq m/2$. Then the rigidity degree $\rd(X)$ is listed in Table \ref{tab-typeA}.
	\renewcommand{\arraystretch}{1.7}
	\begin{table}[h]
		\centering
	\begin{tabular}{l|l}
	\hline
	 \quad\quad\quad  $\rd(X)$ & condition \\
	 \hline
	  $\frac{2}{s}\Fb_l({\bf k})-1$, & $s_{l+1}<t<s_l$, $l$ is even, or $l=|\wseq{}|$; \\
	 $\frac{2}{s}\Fb_l({\bf k})$, &  $s_{l+1}\leq t\leq s_l$, $l<|\wseq{}|$ is odd;\\
	 $\frac{2}{s}(\Fb_{|\kk|}({\bf k})-\Fb_{|\kk|-1}({\bf k}))$, & $|\wseq{}|$ is odd and $t=s_{|\kk|}\leq m/2$.\\
	\hline
	\end{tabular}
	\caption{\label{tab-typeA} Rigidity degrees: Type $A$}
\end{table}
	\end{Theo}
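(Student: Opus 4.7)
The plan is to use Lemma~\ref{lem-SE-basic-property} to reduce the problem to a question about remainders modulo $N$, and then to read off the answer from Proposition~\ref{prop-rm-range}.

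First, by Lemma~\ref{lem-SE-basic-property}(3), $\rd(X)=\inf\SE_G(x)-1$ for any lift $x=(i,t)\in\mathbb{Z}A_{m-1}$ of $X$. I would then write down, in the coordinates of the picture above, both the lift $\omega$ of the syzygy $\Omega_{\Lambda}$ and the generator of $G$. In type $(A_{m-1},u,1)$ one has $\phi=\mathrm{id}$, $G=\langle\tau^{u(m-1)}\rangle$, and a direct computation with the Nakayama functor (using $\Omega^{2}\cong\tau\nu$ in the stable category) shows that $\omega$ acts essentially as a horizontal shift by $m$ composed, when necessary, with the level-reflection $t\leftrightarrow m-t$; the restriction $t\le m/2$ in the statement is exactly what allows us to absorb this reflection. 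In type $(A_{2p+1},u,2)$, $\phi$ is the non-trivial Dynkin automorphism and $\omega^{2}$ rather than $\omega$ is a pure $\tau$-power; passing to the square is precisely what doubles $N$ and produces the offset $m/2$ in $M$ prescribed in the statement, and it is also responsible for the factor $\tfrac{2}{s}$ appearing in every row of Table~\ref{tab-typeA}.

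Next I would use the explicit description of $H^-(x)$ on $\mathbb{Z}A_{m-1}$ from \cite[4.4.2]{Iyama2005c}, which for type $A$ is the familiar triangular hammock of vertices of level at most $t$ sitting in a strip of width $t$ to one side of $x$, in order to prove the key equivalence
\[ G\omega^{r}(x)\cap H^-(x)\neq\emptyset \ \Longleftrightarrow\ \rem{rM}_{N}\in[0,\,t-1]\cup[N-t+1,\,N-1]. \]
Thus $\inf\SE_G(x)$ is the smallest positive integer $r$ at which the remainder $\rem{rM}_{N}$ escapes the closed interval $[t,\,N-t]$, and this is precisely the quantity controlled by Proposition~\ref{prop-rm-range} with $(m,n)$ replaced by $(M,N)$.

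Finally, I would match the three rows of Table~\ref{tab-typeA} against the three cases of Proposition~\ref{prop-rm-range}. In the first row, $s_{l+1}<t<s_l$ with $l$ even (or $l=|\mathbf{k}|$ regardless of parity): part~(1) together with $t<s_l$ gives $\rem{rM}_{N}\ge s_l>t$ for every $r$ in the allowed range, a symmetric argument with $r-1$ handles the upper threshold, and the first escape occurs at $r=\Fb_{l}(\mathbf{k})$, yielding $\rd(X)=\tfrac{2}{s}\Fb_{l}(\mathbf{k})-1$. The second row, $s_{l+1}\le t\le s_l$ with $l<|\mathbf{k}|$ odd, is handled symmetrically using parts (1) and (2) and gives $\rd(X)=\tfrac{2}{s}\Fb_{l}(\mathbf{k})$; the third row uses the extremal value $r=\Fb_{|\mathbf{k}|}(\mathbf{k})-\Fb_{|\mathbf{k}|-1}(\mathbf{k})$ that is highlighted in parts~(2) and~(3) of the proposition. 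The main obstacle will be the first step: writing down $\omega$ precisely (especially its interaction with $\phi$ in the $s=2$ case) and verifying that the shape of $H^-((i,t))$ really does translate to the clean arithmetic condition $\rem{rM}_{N}\notin[t,N-t]$; once this dictionary is established, the three rows of Table~\ref{tab-typeA} follow directly from the three parts of Proposition~\ref{prop-rm-range} by a straightforward bookkeeping of the parities involved.
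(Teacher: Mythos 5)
Your overall strategy --- reduce $\rd(X)$ to a question about which multiples of $M$ land in a prescribed set of residues modulo $N$, and then invoke Proposition \ref{prop-rm-range} --- is exactly the strategy of the paper. But the dictionary you propose as the ``key equivalence'' is not correct, and the error is not cosmetic: it changes the answer. You claim
$$G\omega^{r}(x)\cap H^-(x)\neq\emptyset \iff \rem{rM}_{N}\in[0,\,t-1]\cup[N-t+1,\,N-1],$$
a symmetric window condition on the single remainder $\rem{rM}_{N}$. What is actually true is asymmetric and involves \emph{two different multiples}. For $s=2$ the correct statement (Proposition \ref{prop-mob-SE}) is that $r\in\SE_G(t)$ if and only if $\rem{rM}_N<t$ \emph{or} $\rem{(r-1)M}_N\geq N-t$; since $M=n+m$ is roughly $N/2$ modulo $N=2n+m$, the condition $\rem{(r-1)M}_N\geq N-t$ is in no way equivalent to $\rem{rM}_N>N-t$. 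For $s=1$ the situation is worse for your dictionary: with $M=m$, $N=n$ the correct conditions (Proposition \ref{prop-inSE}) are $2k\in\SE_G(t)\iff\rem{km}_n<t$ and $2k+1\in\SE_G(t)\iff\rem{km}_n\geq n-t$, i.e.\ the relevant multiple is $\lfloor r/2\rfloor m$, not $rm$. This pairing of $2k$ and $2k+1$ with the same remainder $\rem{km}_n$ is the true source of the factor $\frac{2}{s}$ in Table \ref{tab-typeA} (it equals $2$ precisely when $s=1$), whereas you attribute that factor to the doubling of $N$ in the $s=2$ case --- which would make it disappear when $s=1$. With your window condition and $(M,N)=(m,n)$, the first escape at $r$ would give $\rd=r-1$ with no doubling, so the $s=1$ rows would come out wrong by a factor of $2$.

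The asymmetric two-multiple form of the criterion is also exactly what Proposition \ref{prop-rm-range} is designed for: it bounds $\rem{rm}_n$ from below and $\rem{(r-1)m}_n$ from above \emph{separately}, and its equality cases are what distinguish the strict inequalities $s_{l+1}<t<s_l$ in the first row of the table from the non-strict ones $s_{l+1}\leq t\leq s_l$ in the second, as well as producing the exceptional value $\Fb_{|\kk|}(\kk)-\Fb_{|\kk|-1}(\kk)$ in the third. Finally, even with the correct dictionary, ``straightforward bookkeeping'' undersells the remaining step: Proposition \ref{prop-rm-range} only pins down $\SE_G(t)$ at special values of $t$, and the paper must first prove that $\rd_G(t)$ is monotone in $t$ (Proposition \ref{prop-inSE}(3), Corollary \ref{koro-mob-rd-decreasing}) and then characterize the ``endpoints'' where it jumps (Propositions \ref{prop-cutoff-pts} and \ref{prop-mob-cut-off}) in order to propagate the computation from $t=s_{l+1}$ or $t=s_{l+1}+1$ to the whole intervals appearing in the table. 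Without the monotonicity-plus-endpoint argument you control $\rd_G(t)$ only at isolated values of $t$.
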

The rest of this section is devoted to giving a proof of Theorem \ref{theo-typeA}.

\medskip
For each vertex $(x,t)$ of $\mathbb{Z}A_{m-1}$, one can check that $H^{-}(x,t)$ consists of  vertices in the  rectangle  and its boundary below.
$$\begin{tikzpicture}
\pgfmathsetmacro{\ul}{0.4};
\draw [-,fill=gray!10] (5*\ul,0) node[circle,fill=black,inner sep=1pt] {} node[left] {$\scriptstyle (x+t-1,1)$}--(7*\ul,2*\ul) node[circle,fill=black,inner sep=1pt] {} node[right] {${\scriptstyle (x,t)}$} --(4*\ul,5*\ul) node[circle,fill=black,inner sep=1pt] {} node[right] {$\scriptstyle (x,m-1)$}  --(2*\ul,3*\ul) node[circle,fill=black,inner sep=1pt] {} node[left] {$\scriptstyle (x+t-1,m-t)$} -- cycle; 
\node at (4.5*\ul,2.5*\ul) {$\scriptstyle H^-(x,t)$};
\end{tikzpicture}$$
and $\omega(x,t)=(x+t,m-t)$. Thus, for each integer $k$,  we have
 $$\omega^{2k}(x,t)=(x+km,t),\quad \omega^{2k+1}(x,t)=(x+km+t,m-t)$$

\begin{Lem}
\label{lemma-HfA}
For $0<t\leq m/2$, and $t\leq t'\leq m-t$, $(y,t')\in H^-(0,t)$ if and only if  $0\leq y<t$.
\end{Lem}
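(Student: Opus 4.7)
The plan is to read off $H^-(0,t)$ directly from the parallelogram picture drawn just above the lemma. With $x=0$, its four corners are $(t-1,1)$, $(0,t)$, $(0,m-1)$, and $(t-1,m-t)$. The two sides of constant first coordinate sit at $y=0$ and $y=t-1$, while the two antidiagonal sides lie on the lines $y+t'=t$ (the segment from $(t-1,1)$ to $(0,t)$) and $y+t'=m-1$ (the segment from $(0,m-1)$ to $(t-1,m-t)$). Consequently, the lattice points of the closed parallelogram are
$$H^-(0,t)=\{(y,t')\,:\,0\leq y\leq t-1,\ t-y\leq t'\leq m-1-y\}.$$

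With this description in hand, the lemma reduces to an elementary inequality check inside the strip $t\leq t'\leq m-t$. The forward direction is immediate: any $(y,t')\in H^-(0,t)$ already satisfies $0\leq y\leq t-1$, which is the same as $0\leq y<t$. For the converse, assume $0\leq y<t$ and $t\leq t'\leq m-t$. Since $y\geq 0$ we have $t-y\leq t\leq t'$, and since $y\leq t-1$ we have $t'\leq m-t\leq m-1-y$; both defining inequalities of the parallelogram are therefore satisfied, so $(y,t')\in H^-(0,t)$.

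There is no real obstacle here; the whole content is coordinate bookkeeping against the picture. The assumption $t\leq m/2$ enters only to guarantee that the strip $t\leq t'\leq m-t$ is non-empty, so that the statement is not vacuous; the bound $t'\leq m-t$ is precisely what is needed to make the upper constraint $t'\leq m-1-y$ automatic, leaving $0\leq y<t$ as the sole surviving condition.
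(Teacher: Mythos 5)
Your proof is correct and follows the same route as the paper, which simply declares the statement ``obvious from the above picture'': you read off the parallelogram $H^-(0,t)=\{(y,t'): 0\leq y\leq t-1,\ t\leq y+t'\leq m-1\}$ from the figure and verify that on the strip $t\leq t'\leq m-t$ the two antidiagonal constraints become automatic, leaving only $0\leq y<t$. The extra coordinate bookkeeping you supply is exactly the verification the paper leaves implicit, so there is nothing to add or correct.
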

\begin{proof}
This is obvious from the above picture.
\end{proof}

Since the vertices in the same $\langle\tau,\omega\rangle$-orbit have the same rigidity degree, it suffices to consider $\rd_G(0,t)$ for $0<t\leq m/2$. For simplicity, we write $\SE_G(t)$ for $\SE_G(0,t)$ and $\rd_G(t)$ for $\rd_G(0,t)$.

\medskip
The group $G$ is cyclic, and is generated by $\tau^n$ for type $(A_{m-1}, u, 1)$ with $n=u(m-1)$, and is generated by $\tau^n\omega$ for type $(A_{2p+1}, u,2)$ with $n=u(2p+1)-(p+1)$. We shall divide the proof of Theorem \ref{theo-typeA} into two cases.
 \subsection{Case I: $G=\langle\tau^n\rangle$}

This happens for type $(A_{m-1}, n/(m-1),1)$. The following proposition collects some basic properties of $\SE_{G}(t)$.
\begin{Prop}
\label{prop-inSE}
Keep the notations above. Let $0<t\leq m/2$. Then

\begin{itemize}
	\item[$(1)$] $2k\in\SE_G(t)$ if and only if $\rem{km}_n<t$.
	\item[$(2)$]  $2k+1\in\SE_G(t)$ if and only if $\rem{km}_{n}\geq n-t$.
	\item[$(3)$] If $t>1$, then $\SE_{G}(t-1)\subseteq\SE_{G}(t)$. In particular, $\rd_G(t)\leq\rd_G(t-1)$.
\end{itemize}
\end{Prop}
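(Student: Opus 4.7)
Since $\SE_G(t)=\SE_G(0,t)$ by definition involves an intersection of $G$-orbits of $\omega^i(0,t)$ with $H^-(0,t)$, the plan is to make both sides of this intersection fully explicit, and then read off (1), (2), (3).

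\textbf{Step 1: Identify the $G$-orbit of $\omega^i(0,t)$.} From the preamble to this section, we already have the formulas
$$\omega^{2k}(0,t)=(km,t),\qquad \omega^{2k+1}(0,t)=(km+t,m-t).$$
Since $G=\langle\tau^n\rangle$ acts by $(a,b)\mapsto(a+n,b)$, the $G$-orbits of these two vertices are
$$\{(km+jn,\,t):j\in\mathbb Z\}\quad\text{and}\quad\{(km+t+jn,\,m-t):j\in\mathbb Z\}$$
respectively, all sitting at a single "height" ($t$ or $m-t$).

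\textbf{Step 2: Use Lemma \ref{lemma-HfA} to describe $H^-(0,t)$.} The hypothesis $t\le m/2$ gives $t\le m-t$, so both of the heights $t$ and $m-t$ produced in Step 1 fall into the allowed range $t\le t'\le m-t$ of Lemma \ref{lemma-HfA}. Hence membership in $H^-(0,t)$ reduces purely to a condition on the first coordinate: $0\le y<t$. Note also that $t\le m/2<m-1<u(m-1)=n$, so the interval $[0,t)$ is strictly contained in one period of the $G$-action.

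\textbf{Step 3: Deduce (1) and (2).} For even $i=2k$, membership in $H^-(0,t)$ amounts to asking whether some representative of $km\pmod n$ lies in $[0,t)$; since $t<n$, this holds iff the canonical representative $\rem{km}_n$ satisfies $\rem{km}_n<t$, giving (1). For odd $i=2k+1$, membership amounts to asking whether $km+t\pmod n$ lies in $[0,t)$, equivalently whether $km\pmod n$ lies in $[-t,0)$, equivalently $\rem{km}_n\ge n-t$, giving (2).

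\textbf{Step 4: Deduce (3).} Both conditions in (1) and (2) are visibly monotone in $t$: replacing $t$ by $t-1$ shrinks the set $[0,t)$ and the set $[n-t,n)$, so each $2k$ or $2k+1$ that lies in $\SE_G(t-1)$ automatically lies in $\SE_G(t)$. Hence $\SE_G(t-1)\subseteq\SE_G(t)$, and taking infima and subtracting $1$ yields $\rd_G(t)\le\rd_G(t-1)$.

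\textbf{Expected difficulty.} There is no real obstacle here; the whole content is a bookkeeping exercise matching the orbit description of $\omega^i(0,t)$ against the rectangular shape of $H^-(0,t)$. The only mild care needed is verifying $t<n$ so that one can pass from "some representative lies in $[0,t)$" to a statement about $\rem{km}_n$, which is why one uses the standing assumption $t\le m/2$ together with $n=u(m-1)\ge m-1$. This proposition is the combinatorial gateway: once it is in place, the later formulae of Theorem \ref{theo-typeA} will follow by applying Proposition \ref{prop-rm-range} to the resulting remainder conditions.
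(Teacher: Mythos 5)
Your argument is correct and follows essentially the same route as the paper: apply the formulas $\omega^{2k}(0,t)=(km,t)$ and $\omega^{2k+1}(0,t)=(km+t,m-t)$, use Lemma \ref{lemma-HfA} to reduce membership in $H^-(0,t)$ to the first coordinate lying in $[0,t)$ modulo $n$, and read off the remainder conditions, with (3) following by monotonicity. (The only quibble is your chain $m/2<m-1<u(m-1)$, which fails for $m=2$ or $u=1$; the non-strict version $t\leq m/2\leq m-1\leq n$ is all that is needed and always holds.)
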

\begin{proof}
(1). Since $\omega^{2k}(0, t)=(km,t)$, one gets that $2k\in\SE_{G}(t)$ if and only if there is some integer $i$ such that $0\leq km+in< t$ by Lemma \ref{lemma-HfA}, that is, $\rem{km}_{n}<t$. This proves (1).

$(2)$. Recall that $\omega^{2k+1}(0,t)=(km+t, m-t)$. By Lemma \ref{lemma-HfA} again,
we see that $2k+1\in \SE_{G}(t)$ if and only if there is an integer $i$ such that $0\leq km+t+in<t$, that is, $\rem{km}_{n}\geq n-t$.

$(3)$ follows immediately from $(1)$ and $(2)$.
\end{proof}

\begin{Def}
	A positive integer $t\leq m/2$ is called an {\bf endpoint}  if $t=1$ or $\rd_G(t)<\rd_G(t-1)$.
\end{Def}

From the statement (3) of Proposition \ref{prop-inSE}, we see that when $t$ goes from $m/2$ down to $1$, the rigidity degree $\rd_G(t)$ is (not strictly) increasing.  Thus, to find $\rd_G(t)$ for $t\leq m/2$, it suffices to find all the endpoints and their rigidity degrees. The following proposition gives a characterization of endpoints.

\begin{Prop}
\label{prop-cutoff-pts}
Let  $0<t\leq m/2$ and let $r>0$. Then
\begin{itemize}
	\item[$(1)$] $t$ is an endpoint of rigidity degree $2r-1$ if and only if, $\rd_G(t)\geq 2r-1$ and $\rem{rm}_n=t-1$.
	\item[$(2)$] $t$ is an endpoint of rigidity degree $2(r-1)$ if and only if, $\rd_G(t)\geq 2(r-1)$ and $\rem{(r-1)m}_n=n-t$.
\end{itemize}
\end{Prop}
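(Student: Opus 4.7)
My plan is to prove both statements by directly unpacking the definition of ``endpoint'' together with Proposition \ref{prop-inSE}, which translates membership in $\SE_G(t)$ into explicit conditions on $\rem{km}_n$. The key observation is that, by part (3) of Proposition \ref{prop-inSE}, $\SE_G(t-1)\subseteq\SE_G(t)$, so $\rd_G(t)\leq \rd_G(t-1)$ and the strict inequality $\rd_G(t)<\rd_G(t-1)$ occurs precisely when the minimal element $N:=\inf\SE_G(t)$ satisfies $N\notin\SE_G(t-1)$. Thus $t$ being an endpoint (with $t>1$) is equivalent to the existence of some $N\in\SE_G(t)\setminus\SE_G(t-1)$ with $N=\inf\SE_G(t)$; the boundary case $t=1$ will be handled as a short separate remark, since then $t$ is an endpoint by definition.

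For part (1), assume first that $t$ is an endpoint of rigidity degree $2r-1$. Then $\inf\SE_G(t)=2r$, and by Proposition \ref{prop-inSE}(1) this gives $\rem{rm}_n<t$. If $t>1$, the fact that $2r\notin\SE_G(t-1)$ yields $\rem{rm}_n\geq t-1$, hence $\rem{rm}_n=t-1$; if $t=1$ the condition $\rem{rm}_n<1$ directly gives $\rem{rm}_n=0=t-1$. Conversely, assume $\rd_G(t)\geq 2r-1$ and $\rem{rm}_n=t-1$. Then $\rem{rm}_n<t$, so Proposition \ref{prop-inSE}(1) gives $2r\in\SE_G(t)$, and hence $\rd_G(t)\leq 2r-1$. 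Combined with $\rd_G(t)\geq 2r-1$, we obtain $\rd_G(t)=2r-1$, and in particular $\inf\SE_G(t)=2r$. On the other hand, $\rem{rm}_n=t-1\not<t-1$ shows $2r\notin\SE_G(t-1)$, so $\inf\SE_G(t-1)>2r>\rd_G(t)$, proving $\rd_G(t-1)>\rd_G(t)$ when $t>1$; and when $t=1$ the endpoint condition is automatic. Either way, $t$ is an endpoint of rigidity degree $2r-1$.

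The proof of part (2) is completely parallel, replacing Proposition \ref{prop-inSE}(1) with Proposition \ref{prop-inSE}(2). If $t$ is an endpoint of rigidity degree $2(r-1)$, then $\inf\SE_G(t)=2r-1$, so $\rem{(r-1)m}_n\geq n-t$; and $2r-1\notin\SE_G(t-1)$ gives $\rem{(r-1)m}_n<n-(t-1)$, i.e., $\rem{(r-1)m}_n\leq n-t$, whence $\rem{(r-1)m}_n=n-t$. Conversely, the equality $\rem{(r-1)m}_n=n-t$ forces $2r-1\in\SE_G(t)$ but $2r-1\notin\SE_G(t-1)$, so the hypothesis $\rd_G(t)\geq 2(r-1)$ combined with $2r-1\in\SE_G(t)$ pins down $\rd_G(t)=2(r-1)$, and the endpoint condition follows as before.

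There is essentially no obstacle beyond careful bookkeeping; the main thing to be alert to is the treatment of the boundary $t=1$ and the correct matching between the parity of $\inf\SE_G(t)$ and the corresponding clause of Proposition \ref{prop-inSE}. Once those are in place, the argument is a direct translation of the definition of an endpoint into a statement about $\rem{rm}_n$.
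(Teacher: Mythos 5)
Your proof is correct and follows essentially the same route as the paper: both arguments unpack the definition of an endpoint via the containment $\SE_G(t-1)\subseteq\SE_G(t)$ from Proposition \ref{prop-inSE}(3) and then translate the conditions $2r\in\SE_G(t)$, $2r\notin\SE_G(t-1)$ (respectively $2r-1\in\SE_G(t)$, $2r-1\notin\SE_G(t-1)$) into the stated equalities on remainders using parts (1) and (2) of that proposition, with the boundary case $t=1$ treated separately. The only difference is presentational: the paper compresses each case into a single chain of equivalences under the hypothesis $\rd_G(t)\geq 2r-1$ (resp.\ $\geq 2(r-1)$), while you argue the two implications separately via $\inf\SE_G(t)$.
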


\begin{proof}
Suppose that $t=1$. In this case $t$ is by definition an endpoint. If $\rd_G(1)=2r-1$, then $2r\in\SE_{G}(1)$, and thus $\rem{rm}_{n}<1$ and hence $\rem{rm}_{n}=0=t-1$ by Proposition \ref{prop-inSE} (1). If $\rd_G(1)=2(r-1)$, then $2r-1\in\SE_{G}(1)$. Hence $\rem{(r-1)m}_{n}\geq n-1$ and thus $\rem{(r-1)m}_{n}=n-1=n-t$ by Proposition \ref{prop-inSE}  (2).

Now assume that $t>1$. Then, under the hypothesis $\rd_{G}(t)\geq 2r-1$, $t$ is an endpoint of rigidity degree $2r-1$ if and only if $2r\in\SE_{G}(t)$ and $2r\not\in\SE_{G}(t-1)$, which is equivalent to $\rem{rm}_{n}=t-1$ by Proposition \ref{prop-inSE} (1).  Under the hypothesis $\rd_{G}(t)\geq 2(r-1)$, $t$ is an endpoint of rigidity degree $2(r-1)$ if and only if $2r-1\in\SE_{G}(t)$ and $2r-1\not\in\SE_{G}(t-1)$. This is equivalent to $\rem{(r-1)m}_{n}=n-t$ by Proposition \ref{prop-inSE} (2).
\end{proof}

\begin{proof}[{\bf Proof of Theorem \ref{theo-typeA}($s=1$)}]
In this case, $M=m, N=n$. For simplicity, we write $\Fb_l$ for $\Fb_l(\mathbf{k})$ and $d+1=|\kk|$ in this proof. What we need to prove is, for each $1\leq t\leq m/2$,
$$\rd_G(t)=\begin{cases}
	2\Fb_l-1, & s_{l+1}<t<s_l, l\mbox{ is even, or }l=d+1;\\
	2\Fb_l, &  s_{l+1}\leq t\leq s_l, l<d+1\mbox{ is odd};\\
	2(\Fb_{d+1}-\Fb_{d}), & d\mbox{ is even and }t=s_{d+1}\leq m/2.\\
\end{cases}$$
If $l=-1$ and $s_{l+1}=n\leq m/2$, then $n$ is an endpoint of rigidity degree $0$ by Proposition \ref{prop-cutoff-pts} (2).  Hence $\rd_G(t)=2\Fb_{-1}=0$ for all $s_0\leq t\leq m/2$. Suppose that $0\leq l\leq d+1$ and $s_{l+1}<t<s_l$. For each $r<\Fb_l$, by Proposition \ref{prop-rm-range}, one gets $\rem{rm}_n\geq s_l>t$ and
$\rem{(r-1)m}_{n}\leq n-s_{l}<n-t$. It follows that $2r\not\in\SE_{G}(t)$ and $2r-1\not\in\SE_{G}(t)$ for all $r<\Fb_{l}$ by Proposition \ref{prop-inSE}. Hence $\rd_{G}(t)\geq 2\Fb_{l}-2$. To show that $\rd_{G}(t)\geq 2\Fb_{l}-1$, it suffices to prove that $2\Fb_{l}-1\not\in\SE_{G}(t)$, or equivalently,
$$\rem{(\Fb_{l}-1)m}_{n}<n-t.$$

Assume that $l$ is even.  Then $\rem{(\Fb_{l}-1)m}_{n}\equiv (-1)^{l}s_{l+1}-s_{1}\equiv s_{l+1}-s_{1}\pmod{n}$. If $l=0$, then $\rem{(\Fb_{l}-1)m}_{n}=0= n-s_{l}<n-t$.
If $l\geq 2$, then $-n\leq s_{l+1}-s_{1}<0$ and thus
$$\begin{aligned}
\rem{(\Fb_{l}-1)m}_{n} & =n+s_{l+1}-s_{1}\\
& =n -(k_{2}s_{2}+s_3)+s_{l+1}\\
& \leq n-k_{2}s_{2}\\
&\leq n-s_{2}\leq n-s_{l}<n-t.
\end{aligned}$$

Now assume that $l=d+1$ and $t<s_{d+1}$. Since $s_{d+1}$ is a greatest common divisor of $m$ and $n$, the remainder $\rem{rm}_{n}$ is always a multiple of $s_{d+1}$. Hence $\rem{rm}_{n}\leq n-s_{d+1}$ for all integers $r$. Particularly,
$$\rem{(\Fb_{l}-1)m}_{n}=\rem{(\Fb_{d+1}-1)m}_{n}\leq n-s_{d+1}<n-t.$$

Altogether, we have shown that for $l$ is even or $l=d+1$, and $s_{l+1}<t<s_{l}$, there is an inequality $\rd_{G}(t)\geq 2\Fb_{l}-1$. Moreover, $\rem{\Fb_{l}m}_{n}=(-1)^{l}s_{l+1}=s_{l+1}$ when $l$ is even; $\rem{\Fb_{d+1}m}_{n}=(-1)^{d+1}s_{d+2}=0$.  This means that $s_{l+1}+1$ is an endpoint of rigidity degree $2\Fb_{l}-1$ by Proposition \ref{prop-cutoff-pts} (1).
Together with Proposition \ref{prop-inSE} (3), we deduce that
$\rd_{G}(t)=2\Fb_{l}-1$  for all $s_{l+1}<t<s_{l}$.

\medskip
Now assume that $l\leq d$ is odd and $s_{l+1}\leq t\leq s_{l}$. For any $r\leq \Fb_{l}$, by Proposition \ref{prop-rm-range}, we have $\rem{rm}_{n}\geq s_{l}$ and $\rem{(r-1)m}_{n}\leq n-s_{l}$. In the later inequality, equality possibly holds only if  $l=d+1$ which contradicts to our assumption $l\leq d$. Hence $\rem{(r-1)m}_{n}< n-s_{l}$. Since $t\leq s_{l}$, it follows that $\rem{rm}_{n}\geq s_{l}\geq t$ and $\rem{(r-1)m}_{n}<n-s_{l}\leq n-t$ for all $r\leq \Fb_{l}$. By Proposition \ref{prop-inSE}, this means that $2r\not\in\SE_{G}(t)$ and $2r-1\not\in\SE_{G}(t)$ for all $r\leq \Fb_{l}$. Hence $\rd_{G}(t)\geq 2\Fb_{l}$ for all $s_{l+1}\leq t\leq s_{l}$. Moreover,
$$
\rem{\Fb_{l}m}_{n} = \rem{(-1)^{l}s_{l+1}}_{n}
 =\rem{-s_{l+1}}_{n}
 =n-s_{l+1}.
$$
By Proposition \ref{prop-cutoff-pts} (2), one deduces that $s_{l+1}$ is an endpoint of rigidity degree $2\Fb_{l}$. Together with Proposition \ref{prop-inSE} (3), we conclude that $\rd_{G}(t)=2\Fb_{l}$ for all $s_{l+1}\leq t\leq s_{l}$.

Finally, we consider the case that $d$ is even and $s_{d+1}\leq m/2$. Then $l=d+1$ is odd. By Proposition \ref{prop-rm-range}, $\rem{rm}_{n}\geq s_{d+1}$ and  $\rem{(r-1)m}_{n}\leq n-s_{d+1}$ for all $0<r<\Fb_{l}$. Moreover, $\rem{(r-1)m}_{n}= n-s_{d+1}$ if and only if $r=\Fb_{d+1}-\Fb_{d}+1$. Hence $\rem{rm}_{n}\geq s_{d+1}$ and $\rem{(r-1)m}_{n}<n-s_{d+1}$ for all $r\leq \Fb_{d+1}-\Fb_{d}$. It follows that $2r, 2r-1\not\in\SE_{G}(s_{d+1})$ for all $0<r\leq \Fb_{d+1}-\Fb_{d}$, and thus $\rd_{G}(s_{d+1})\geq 2(\Fb_{d+1}-\Fb_{d})$. However, the fact
$$\rem{(\Fb_{d+1}-\Fb_{d})m}_{n}=n-s_{d+1}$$
implies that $2(\Fb_{d+1}-\Fb_{d})+1\in\SE_{G}(s_{d+1})$. Hence $\rd_{G}(s_{d+1})=2(\Fb_{d+1}-\Fb_{d})$.
\end{proof}

\subsection{Case II: $G=\langle\tau^n\omega\rangle$} \label{subsect-mob}
The case happens in type $(A_{m-1}, u, 2)$ and $n=u(m-1)-m/2$. Then
$$M:=n+m,\quad N:=2n+m.$$

\begin{Prop}
	\label{prop-mob-SE}
	Let $t\leq m/2$ be a positive integer and let $r$ be a positive integer. Then $r\in\SE_{G}(t)$ if and only if $\rem{rM}_{N}<t$ or $\rem{(r-1)M}_{N}\geq N-t$.
	\end{Prop}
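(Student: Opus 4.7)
The plan is to unfold the condition $r\in\SE_G(t)$ using Lemma \ref{lem-SE-basic-property} and Lemma \ref{lemma-HfA}, and then translate the resulting congruences into the claimed form using the key arithmetic identity $2M\equiv m\pmod N$ (which follows from $M=n+m$, $N=2n+m$).

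First I would describe the action of $G=\langle \tau^n\omega\rangle$ explicitly. Computing from $(\tau^n\omega)(y,t')=(y+t'+n,m-t')$ and iterating, one gets
$$(\tau^n\omega)^{2j}(y,t')=(y+jN,t'), \qquad (\tau^n\omega)^{2j+1}(y,t')=(y+t'+n+jN,m-t').$$
Since $t\leq m/2$, Lemma \ref{lemma-HfA} says $(y,t')\in H^-(0,t)$ requires $t'\in\{t,m-t\}$ and $0\leq y<t$. Thus $r\in\SE_G(t)$ iff $\omega^r(0,t)$ lies in the $G$-orbit of some $(y,t')$ with $t'\in\{t,m-t\}$ and $0\leq y<t$.

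Next, split on parity. For $r=2k$, $\omega^{2k}(0,t)=(km,t)$. Matching against $G$-orbit representatives with second coordinate $t$ (use even power of $\tau^n\omega$) gives the condition $\rem{km}_N<t$; matching against those with second coordinate $m-t$ (use odd power) yields $\rem{km-m-n+t}_N<t$, which equals $\rem{km+n+t}_N<t$ because $-m-2n\equiv 0\pmod N$. The latter is equivalent to $\rem{km+n}_N\geq N-t$ (the shift by $t$ is valid since $t\leq m/2<N$). For $r=2k+1$, $\omega^{2k+1}(0,t)=(km+t,m-t)$; the analogous two subcases yield $\rem{km-n}_N<t$ and $\rem{km}_N\geq N-t$.

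Finally, I would reconcile these with $rM,(r-1)M$ modulo $N$ via the identity $2M=N+m$. This gives $2kM\equiv km\pmod N$ and $(2k+1)M\equiv km+M=km+n+m\pmod N$. Moreover $km-n\equiv km+n+m\pmod N$. Substituting, the four cases above match the two conditions $\rem{rM}_N<t$ or $\rem{(r-1)M}_N\geq N-t$ uniformly, completing the proof. The only real care needed is in the shift conversion (such as $\rem{X+t}_N<t \iff \rem{X}_N\geq N-t$), but this is routine given $t<N$; there is no essential obstacle, the argument is a direct orbit computation.
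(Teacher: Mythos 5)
Your argument is correct and follows essentially the same route as the paper's proof: an explicit computation of the $G$-orbit split by the parity of $r$, using $\omega^2=\tau^m$ and the congruence $2M\equiv m\pmod N$ to rewrite the four resulting conditions as $\rem{rM}_N<t$ or $\rem{(r-1)M}_N\geq N-t$. The only nitpick is that Lemma \ref{lemma-HfA} does not itself assert $t'\in\{t,m-t\}$ — that restriction comes from the fact that the orbit of $(0,t)$ under $\langle\tau,\omega\rangle$ only meets those two rows — but your use of it is sound.
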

	\begin{proof}
	
	$r\in\SE_{G}(t)$ if and only if
	 $G\omega^{r}(0,t)\cap H^{-}(0, t)\neq\emptyset$, if and only if there is an integer $k$ such that $$(\tau^{n}\omega)^{2k}\omega^{r}(0,t)\in H^{-}(0,t), \mbox{ or } (\tau^{n}\omega)^{2k+1}\omega^{r}(0,t)\in H^{-}(0,t). \quad (\star)$$
	Note that $\omega^{2}=\tau^{m}$, which will be used frequently in this proof.
	
	(1) Assume that $r=2l$ is even. Then
	$$(\tau^{n}\omega)^{2k}\omega^{r}(0,t)  =\tau^{2kn}\tau^{m(l+k)}(0,t) = (kN+lm,t), $$
	$$
	(\tau^{n}\omega)^{2k+1}\omega^{r}(0,t)  =\tau^{2kn+n}\tau^{m(l+k)}\omega(0,t) = (kN+lm+n+t,m-t).$$
	Thus $r\in\SE_G(t)$ if and only if there is some integer $k$ such that $0\leq kN+lm<t$ or $0\leq kN +lm+n+t<t$. Equivalently, $\rem{lm}_{N}<t$ or $\rem{lm+n+t}_{N}<t$. Now $\rem{lm}_{N}=\rem{2lM}_{N}=\rem{rM}_{N}$ and $\rem{lm+n+t}_{N}<t$ if and only if $\rem{lm+n}_{N}\geq N-t$. However $(2l-1)M\equiv lm+n \pmod{N}$. Hence $r=2l\in \SE_{G}(t)$ if and only if $\rem{rM}_{N}<t$ or $\rem{(r-1)M}_{N}\geq N-t$.
	
	(2) Now suppose that $r=2l+1$ is odd. By calculation, one gets
	$$(\tau^{n}\omega)^{2k}\omega^{r}(0,t)=(kN+lm+t,m-t)$$
	$$(\tau^{n}\omega)^{2k+1}\omega^{r}(0,t)=(kN+lm+m+n,t)$$
	and deduces that $r\in \SE_{G}(t)$ if and only if $\rem{lm+t}_{N}<t$ or $\rem{lm+m+n}_{N}<t$.  Equivalently, $\rem{lm}_{N}\geq N-t$ or $\rem{lm+m+n}_{N}<t$. Finally $\rem{lm}_{N}=\rem{2lM}_{N}$ and $(2l+1)M\equiv lm+m+n\pmod{N}$. Hence $r\in\SE_{G}(t)$ if and only if $\rem{rM}_{N}<t$ or $\rem{(r-1)M}\geq N-t$.
	\end{proof}
	
	An immediate consequence is the following.
	
	\begin{Koro}
	\label{koro-mob-rd-decreasing}
	Suppose $0<t\leq t'\leq m/2$. Then $\SE_{G}(t)\subseteq\SE_{G}(t')$ and $\rd_{G}(t)\geq\rd_{G}(t')$.
	\end{Koro}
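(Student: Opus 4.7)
The plan is that this corollary should fall out immediately from Proposition \ref{prop-mob-SE}, so the proof is essentially a monotonicity observation. First I would fix $r\in\SE_G(t)$ and apply Proposition \ref{prop-mob-SE} to rewrite membership as the disjunction $\rem{rM}_N<t$ or $\rem{(r-1)M}_N\geq N-t$. The key point is that both inequalities are monotone in the parameter $t$ in the right direction: from $t\leq t'$ we get $\rem{rM}_N<t\leq t'$ in the first case and $\rem{(r-1)M}_N\geq N-t\geq N-t'$ in the second. Each case therefore witnesses membership in $\SE_G(t')$ by Proposition \ref{prop-mob-SE} again, yielding the containment $\SE_G(t)\subseteq\SE_G(t')$.

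The inequality on rigidity degrees then follows from the definition $\rd_G(-)=\inf\SE_G(-)-1$, recalling the convention $\inf\emptyset=\infty$. Since the infimum of a set is weakly greater than the infimum of any superset, $\inf\SE_G(t')\leq\inf\SE_G(t)$, so $\rd_G(t')\leq\rd_G(t)$. The case when $\SE_G(t)$ is empty (infinite rigidity degree) is handled by the same convention and causes no issue.

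I don't anticipate any serious obstacle. The only thing to be careful about is making sure the translation from Proposition \ref{prop-mob-SE} is applied in the correct direction (that the conditions defining $r\in\SE_G(t)$ weaken as $t$ grows), which is transparent once one writes the two inequalities side by side. This also parallels part (3) of Proposition \ref{prop-inSE} in Case I, so the statement is the natural analogue for the twisted group $G=\langle\tau^n\omega\rangle$.
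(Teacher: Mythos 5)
Your proof is correct and is exactly the argument the paper intends: the corollary is stated as an ``immediate consequence'' of Proposition \ref{prop-mob-SE}, and your monotonicity observation (both defining conditions for $r\in\SE_G(t)$ weaken as $t$ increases, hence the containment, hence the inequality of infima) is the whole content. No issues.
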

	
Thus, $\rd_G(t)$ is increasing when $t$ goes from $m/2$ down to $1$, and we can similarly call a positive integer $t\leq m/2$ an endpoint if $t=1$ or $\rd_G(t)<\rd_G(t-1)$.

	\begin{Prop}
	\label{prop-mob-cut-off}
	Suppose that $0<t\leq m/2$. Then $t$ is an endpoint of rigidity degree $r$ if and only if $\rd_{G}(t)\geq r$ and either $\rem{(r+1)M}_{N}=t-1$ or $\rem{rM}_{N}=N-t$ holds.
	\end{Prop}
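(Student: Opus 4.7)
The plan is to unwind the definition of \emph{endpoint} in terms of the Ext-support sets $\SE_G$, then translate the conditions via Proposition~\ref{prop-mob-SE} and simplify using the specific arithmetic $M=n+m$, $N=2n+m$, $1\leq t\leq m/2$. By definition, $t$ is an endpoint of rigidity degree $r$ exactly when $\rd_G(t)=r$ and either $t=1$ or $\rd_G(t-1)>r$. Under the standing hypothesis $\rd_G(t)\geq r$, the equality $\rd_G(t)=r$ is equivalent to $r+1\in\SE_G(t)$; and for $t>1$, the strict increase $\rd_G(t-1)>r$ is equivalent to $r+1\notin\SE_G(t-1)$, since by Corollary~\ref{koro-mob-rd-decreasing} the smaller exponents are already excluded from $\SE_G(t-1)\subseteq\SE_G(t)$.

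Applying Proposition~\ref{prop-mob-SE}, the condition $r+1\in\SE_G(t)$ becomes ``$\rem{(r+1)M}_N<t$ or $\rem{rM}_N\geq N-t$'', while $r+1\notin\SE_G(t-1)$ becomes ``$\rem{(r+1)M}_N\geq t-1$ and $\rem{rM}_N\leq N-t$''. Intersecting these gives the disjunction: either $\rem{(r+1)M}_N=t-1$ together with $\rem{rM}_N\leq N-t$, or $\rem{rM}_N=N-t$ together with $\rem{(r+1)M}_N\geq t-1$. The task is to verify that under our parameter constraints the secondary inequality in each clause is an automatic consequence of the primary equality, so that the condition collapses to the clean ``$\rem{(r+1)M}_N=t-1$ or $\rem{rM}_N=N-t$'' in the statement.

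For the first clause, if $\rem{(r+1)M}_N=t-1$ then $\rem{rM}_N\equiv t-1-M\pmod{N}$; since $t-1-M<0$ and $t-1-M+N=n+t-1\in[0,N)$, one gets $\rem{rM}_N=n+t-1$, and the bound $n+t-1\leq N-t=2n+m-t$ reduces to $2t\leq n+m+1$, which is implied by $2t\leq m$. For the second clause, if $\rem{rM}_N=N-t$, then $\rem{(r+1)M}_N=\rem{(N-t)+M}_N=M-t=n+m-t$, and $n+m-t\geq t-1$ is again implied by $2t\leq m$. The case $t=1$ is handled uniformly: the ``$r+1\notin\SE_G(t-1)$'' requirement is vacuous, and $r+1\in\SE_G(1)$ translates directly to $\rem{(r+1)M}_N=0=t-1$ or $\rem{rM}_N=N-1=N-t$.

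The main technical point is keeping careful track of which of the two disjuncts in Proposition~\ref{prop-mob-SE} is active on each of the two sides of the passage from $t-1$ to $t$; the short arithmetic verifications above are exactly what collapse the intersected two-part conditions down to a single equality per clause, yielding the stated characterization.
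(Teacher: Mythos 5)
Your proof is correct and follows essentially the same route as the paper: unwind the definition of endpoint into the two membership conditions $r+1\in\SE_G(t)$ and (for $t>1$) $r+1\notin\SE_G(t-1)$, then translate both through Proposition~\ref{prop-mob-SE}. You are in fact slightly more thorough than the paper, which asserts the final equivalence after intersecting the two translated conditions without explicitly checking that the residual inequalities ($\rem{rM}_{N}\leq N-t$ in the first clause, $\rem{(r+1)M}_{N}\geq t-1$ in the second) are automatic consequences of the corresponding equalities; your short computations using $M=n+m$, $N=2n+m$ and $2t\leq m$ supply exactly that verification.
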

\begin{proof}
Suppose that $\rd_G(t)=r$. By definition, $t$ is an endpoint if and only if either $t=1$ or $\rd_G(t)<\rd_G(t-1)$. Equivalently, $t=1$ or $t>1, r+1\notin\SE_G(t-1)$.

Suppose that $t=1$. Then $r+1\in\SE_G(1)$. By Proposition \ref{prop-mob-SE}, this means that $\rem{(r+1)M}_N<1$ or  $\rem{rM}_N\geq N-1$. This forces that $\rem{(r+1)M}_N=0=t-1$ or  $\rem{rM}_N=N-1=N-t$.

Suppose that $t>1$. Then $r+1\notin \SE_G(t-1)$ means $\rem{(r+1)M}_N\geq t-1$ and $\rem{rM}_N< N-(t-1)$. Since $\rd_G(t)=r$, we have $r+1\in\SE_G(t)$, and thus $\rem{(r+1)M}_N<t$ or  $\rem{rM}_N\geq N-t$. Hence $r+1\notin \SE_G(t-1)$ is equivalent to the condition  $\rem{(r+1)M}_{N}=t-1$ or $\rem{rM}_{N}=N-t$ in this case.
\end{proof}

	\begin{proof}[{\bf Proof of Theorem \ref{theo-typeA}($s=2$)}]
	For simplicity, we write $\Fb_l$ for $\Fb_l(\kk)$, and write $d+1=|\kk|$. Note that $t\leq m/2<M=s_1$.
	For each $1\leq l\leq d+1$, $0<r<\Fb_l$ and $t<s_l$, one has
	$$\rem{rM}_N\geq s_l>t\mbox{ and } \rem{(r-1)M}_N\leq N-s_l<N-t$$
	by Proposition \ref{prop-rm-range}. Together with Proposition \ref{prop-mob-SE}, this implies that $r\notin\SE_G(t)$. Hence $\rd_G(t)\geq \Fb_l-1$ for all $t<s_l$.
	
	If $l$ is even, or $l=d+1$, then by Lemma \ref{lemma-Bls1-rem}
	$$\rem{\Fb_lM}_N\equiv (-1)^{l}s_{l+1}=\left\{\begin{array}{ll}
	s_{l+1},& l \mbox{ is even,} \\
	0,& l=d+1.
	\end{array}\right. \pmod{N}$$
	Actually in both cases we get $\rem{\Fb_lM}_N=s_{l+1}$ since $s_{d+2}=0$. It follows from Proposition \ref{prop-mob-cut-off} that {$t=s_{l+1}+1$} is an endpoint of rigidity degree $\Fb_l-1$. Altogether, we have
	$$\Fb_l-1\leq \rd_G(t)\leq \rd_G(s_l+1)=\Fb_l-1$$	
	for all $s_{l+1}<t<s_l$. Here the second $\leq$ follows from Corollary \ref{koro-mob-rd-decreasing}. Hence $\rd_G(t)=\Fb_l-1$ for all $s_{l+1}<t<s_l$ if $l$ is even or $l=d+1$.
	
	Now assume that $l\leq d$ is odd, $r\leq \Fb_l$ and $t\leq s_l$. If follows from Proposition \ref{prop-rm-range} that
	$$\rem{rM}_N\geq s_l\geq t\mbox{ and }\rem{(r-1)M}_N\leq N-s_l\leq N-t.$$
	Moreover, the equality $\rem{(r-1)M}_N= N-s_l$ holds only if $l=d+1$ which is excluded by our assumption $l\leq d$. Hence
$\rem{rM}_N\geq t\mbox{ and }\rem{(r-1)M}_N<N-t$ for all $r\leq \Fb_l$ and $t\leq s_l$, and thus $\rd_G(t)\geq \Fb_l$ for all $t\leq s_l$. Moreover
	$$\rem{\Fb_lM}_N\equiv (-1)^ls_{l+1}\equiv N-s_{l+1} \pmod{N}.$$
	By Proposition \ref{prop-mob-cut-off}, $t=s_{l+1}$ is an endpoint of rigidity degree $\Fb_l$, and it follows that $\rd_G(t)=\Fb_l$ for all $s_{l+1}\leq t\leq s_l$.
	
	Now the only missing case is that $l=d+1$ is odd and $t=s_{d+1}\leq m/2$. In this case
	$$\rem{(\Fb_{d+1}-\Fb_d)M}_N=N-s_{d+1}$$
	by Proposition \ref{prop-rm-range}. This means that
	$$\Fb_{d+1}-\Fb_d+1\in \SE_G(s_{d+1})$$
	by Proposition \ref{prop-mob-SE} and thus $\rd_G(t)\leq \Fb_{d+1}-\Fb_d$. Now for $r\leq \Fb_{d+1}-\Fb_d$ which is of course less than $\Fb_{d+1}$. By Proposition \ref{prop-rm-range}, we have
	$$\rem{rM}_N\geq s_{d+1}\mbox{ and }\rem{(r-1)M}_N\leq N-s_{d+1}.$$
	Here the equality $\rem{(r-1)M}_N= N-s_{d+1}$ cannot hold since $r\neq \Fb_{d+1}-\Fb_d+1$. Together with Proposition \ref{prop-mob-SE}, one again has $r\notin\SE_G(s_{d+1})$. Hence
	$\rd_G(s_{d+1})=\Fb_{d+1}-\Fb_d$ in this case.
	\end{proof}

\section{Rigidity degrees of indecomposable modules: type $D$}

For convenience, we assume that the stable Auslander-Reiten quiver of $\Lambda$ is $\mathbb{Z}D_{m+1}/G$ so that $m$ is precisely half of the Coxeter number.  The vertices of $D_{m+1}$ are labelled as follows.
\begin{center}
\begin{tikzpicture}[scale=0.5]
	\draw (1,4)--(0,3)--(1,2);
	\draw[dotted] (1,2)--(2,1);
	\draw (2,1)-- (3,0);
	\draw (0,3)--(1,3);
	\fill[fill=black]  (0,3) circle (2pt) node[left] {${\scriptstyle m-1}$};
	\fill[fill=black]  (1,2) circle (2pt) node[left] {${\scriptstyle m-2}$};
	\fill[fill=black]  (3,0) circle (2pt) node[right] {${\scriptstyle 1}$};
	\fill[fill=black]  (2,1) circle (2pt) node[right] {${\scriptstyle 2}$};
	\fill[fill=black]  (1,3) circle (2pt) node[right] {${\scriptstyle m_{+}}$};
	\fill[fill=black]  (1,4) circle (2pt) node[right] {${\scriptstyle m_-}$};
\end{tikzpicture}
\end{center}
The type of $\Lambda$ is one of the following.
\begin{itemize}
	\item $(D_{m+1}, u, 1), m, u\in \mathbb{N}, m\geq 3$;
	\item $(D_{3w},v/3,1), w,v\in\mathbb{N},w\geq 2, 3\nmid v$;
	\item $(D_{m+1}, u, 2), m, u\in \mathbb{N}, m\geq 3$;
	\item $(D_4, u, 3), u\in\mathbb{N}$.
\end{itemize}
For each type $(D_{m+1}, u, s)$, where $s=1,2,3$, the group $G$ is generated by $\tau^n\phi$, where $n=u(2m-1)$ and $\phi$ is induced by an automorphism of $D_{m+1}$ with order $s$. That is, $\phi$ is the identity map when $s=1$; $\phi$ interchanges $(x, m_+)$ and  $(x, m_-)$ for all $x\in\mathbb{Z}$ when $s=2$. The case $s=3$ happens only for $D_4$ and $\phi$ is induced by the obvious automorphism of $D_4$ of order $3$.

\medskip
The rigidity degrees of indecomposable $\Lambda$-modules can be formulated as the following theorem.

\begin{Theo}\label{theo-typeD}
Suppose that $\Lambda$ is of type $(D_{m+1}, u, s)$. Let $n=u(2m-1)$ and let ${\bf k}=k(m,n)$ be the weight sequence. Then for each indecomposable $\Lambda$-module $X$, corresponding to the vertex $(x,t)$ on $\mathbb{Z}D_{m+1}$, the rigidity degree $\rd(X)$ can be read from Table \ref{tab-typeD}.
	\begin{table}[h]
    \renewcommand{\arraystretch}{1.7}
	\centering
	\begin{tabular}{c|c|ll}
	\hline
	  & vertex $t$ & $\rd_G(t)$ & condition \\
	 \hline
	\multirow{3}{*}{$s\neq 3$} & \multirow{3}{*}{$t<m$} &  $\Fb_l({\bf k})-1$, &  $s_{l+1}<t<s_l$, $l$ is even, or $l=|{\bf k}|$ \\
	& & $\Fb_l({\bf k})$, &  $s_{l+1}\leq t\leq s_l$, $l<|\kk|$ is odd\\
	& &$\Fb_{|\kk|}({\bf k})-\Fb_{|\kk|-1}({\bf k})$, & $|\kk|$ is odd and $t=s_{|\kk|}<m$\\
	\hline
	$s\neq 3, m\geq n$
	& $t=m_{\pm}$ & 0 &\\
	\hline
	\multirow{4}{*}{$s\neq 3, m<n$} & \multirow{4}{*}{$t=m_{\pm}$} &
	$\Fb_1({\bf k})-1$, & $m\mid n,  \Fb_1({\bf k})+n+s$  is odd\\
	&& $2\Fb_1({\bf k})-1$, & $m\mid n,  \Fb_1({\bf k})+n+s$  is even\\
	&& $\Fb_1({\bf k})$, &$m\nmid n,  \Fb_1({\bf k})+n+s$  is even\\
	&&  $\Fb_1({\bf k})+\Fb_2({\bf k})$, & $m\nmid n,  \Fb_1({\bf k})+n+s$  is odd\\
	\hline
	\multirow{5}{*}{$s=3$} & \multirow{3}{*}{$t\neq 2$} &
	$3\Fb_1({\bf k})-1$, & $u\equiv 0 \pmod{3}$\\
	&& $\Fb_1({\bf k})$, & $u\equiv 1 \pmod{3}$\\
	&& $2\Fb_1({\bf k})$, &$u\equiv 2 \pmod{3}$\\
	\cline{2-4}
	& \multirow{2}{*}{$t=2$} &
	$\Fb_1({\bf k})-1$, & $3\mid u$\\
	&& $\Fb_1({\bf k})$, & $3\nmid u$\\
	\hline
	\end{tabular}
	\caption{\label{tab-typeD} Rigidity degrees: Type $D$}
  \end{table}
	\end{Theo}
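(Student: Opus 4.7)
The plan is to adapt the strategy of the type $A$ proof to the more intricate combinatorics of $\mathbb{Z}D_{m+1}$. The main structural differences are that (i) the syzygy $\omega$ on $\mathbb{Z}D_{m+1}$ is no longer a pure power of $\tau$ but of the form $\omega=\tau^{-(m-1)}\psi$, where $\psi$ is the identity or the fork-exchanging automorphism according to the parity of $m$, and (ii) the two fork vertices $m_{+}$ and $m_{-}$ have smaller and differently shaped hammocks $H^{-}$ than the ordinary vertices $t<m$. Accordingly I would split the argument into three blocks, treating in turn the non-fork vertices for $s\in\{1,2\}$, the fork vertices for $s\in\{1,2\}$, and the full analysis for $s=3$ (which occurs only for $D_{4}$).

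For the non-fork case $t<m$ I would first describe $H^{-}(x,t)$ explicitly as the analogue of the rectangle from type $A$, truncated near the fork. Using the formula for $\omega$ one then obtains a direct analogue of Proposition \ref{prop-inSE}: a positive $r$ lies in $\SE_{G}(t)$ precisely when a specific reduction $\rem{rm}_{n}$ (or $\rem{(r-1)m}_{n}$) lies in an interval of length $t$. This yields monotonicity $\SE_{G}(t-1)\subseteq\SE_{G}(t)$ for $t\leq m-1$, so the problem reduces to locating the endpoints of $\rd_{G}$ in the sense of Proposition \ref{prop-cutoff-pts}. Applying Proposition \ref{prop-rm-range} to the weight sequence $\kk=\wseq{m,n}$ then delivers the three values $\Fb_{l}(\kk)-1$, $\Fb_{l}(\kk)$, and $\Fb_{|\kk|}(\kk)-\Fb_{|\kk|-1}(\kk)$ of Table \ref{tab-typeD} in exactly the same way as in Theorem \ref{theo-typeA}. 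The only difference from type $A$ is that the Coxeter-number doubling does not appear here, reflecting the convention $h^{*}_{\Delta}=h_{\Delta}/2$ for type $D$.

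For the fork vertices $t=m_{\pm}$, the hammock $H^{-}(x,m_{\pm})$ is essentially one-dimensional, lying along a single $\tau$-line. I would write $\omega^{r}(x,m_{\pm})$ explicitly and separate the case $m\geq n$, in which the orbit escapes $H^{-}$ for every $r\geq 1$ and forces $\rd_{G}(m_{\pm})=0$, from $m<n$. In the second case, whether $\omega^{r}(x,m_{\pm})$ meets the $G$-orbit of $H^{-}(x,m_{\pm})$ is governed by whether the composite of $\phi$ and $\psi$ sends a $+$-leaf to a $+$-leaf or to a $-$-leaf, which in turn is controlled by the parity of an integer involving $\Fb_{1}(\kk)$, $n$, and $s$, and by whether $m\mid n$. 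A parity case analysis, subdivided by the divisibility, recovers the four sub-cases of the fork section of the table; the values $\Fb_{1}(\kk)-1$, $2\Fb_{1}(\kk)-1$, $\Fb_{1}(\kk)$, $\Fb_{1}(\kk)+\Fb_{2}(\kk)$ correspond to the first indices in the Euclidean expansion at which the correct leaf is reached.

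For $s=3$, the algebra is of type $D_{4}$ with $m=3$ and $n=5u$, so $\kk=\wseq{3,5u}$ has length $1$ or $2$ depending on $u\bmod 3$, and $\phi$ acts with order $3$ on the three outer vertices $\{1,m_{+},m_{-}\}$. I would run the same endpoint analysis as above while tracking $\omega^{r}$ modulo the cyclic group $\langle\tau^{n}\phi\rangle$: a leaf is first hit by $\omega^{r}$ only when $r(m-1)$ lies in an appropriate residue class mod $3$, which splits the answer according to $u\bmod 3$. The central vertex $t=2$ is fixed by $\phi$, so the analysis there collapses to the basic Euclidean combinatorics of $\wseq{3,5u}$ and produces $\Fb_{1}(\kk)-1$ or $\Fb_{1}(\kk)$ according to whether $3\mid u$. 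The main obstacle will be the fork-vertex case in the mixed types $(D_{m+1},u,s)$: the interplay between $\phi$, $\psi$, and the divisibility condition $m\mid n$ produces several subtly different parity conditions, and the cleanest route is to set up the explicit formula for $\omega^{r}(x,m_{\pm})$ in $\mathbb{Z}D_{m+1}/G$ once, after which the remaining work is a finite-case verification driven by Proposition \ref{prop-rm-range}.
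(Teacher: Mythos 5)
Your plan follows the same architecture as the paper's proof: split into non-fork vertices ($s\neq 3$, $t<m$), fork vertices ($s\neq 3$, $t=m_{\pm}$), and the $D_4$ case $s=3$; describe the hammocks and the action of $\omega$; convert membership in $\SE_G(t)$ into remainder conditions modulo $n$; and run the endpoint analysis through Proposition \ref{prop-rm-range}. However, three concrete points are wrong as written and would derail the computation if carried out literally. First, the shift in $\omega$ is $m=h_{\Delta}/2$, not $m-1$: from the hammock one reads off $\omega(x,t)=(x+m,t)$ for $t<m$ and $\omega^{r}(0,m_{+})=(rm,\,m_{\sgn(rm+r)})$. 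Since every remainder in the argument is a remainder of a multiple of this shift modulo $n$, writing $\omega=\tau^{-(m-1)}\psi$ replaces $\rem{rm}_{n}$ by $\rem{r(m-1)}_{n}$ throughout and yields a different table; the content of the theorem is precisely that the Euclidean data of the pair $(m,n)$ with $m=h^{*}_{\Delta}$ governs the answer. Second, for $t<m$ the correct template is Proposition \ref{prop-mob-SE} (the M\"obius case of type $A$), not Proposition \ref{prop-inSE}: because $\omega$ is a pure shift off the fork, the $\tau$-line of $(0,t)$ meets $H^{-}(0,t)$ in the union of \emph{two} intervals $[0,t)\cup[m-t,m)$, so for \emph{every} $r$ (no even/odd split) one has $r\in\SE_G(t)$ iff $\rem{rm}_{n}<t$ or $\rem{(r-1)m}_{n}\geq n-t$; this is Lemma \ref{lem-typeD-low}, and it is exactly what removes the factor $2$ from the type-$A$ formulae. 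Your appeal to Proposition \ref{prop-inSE} would reinstate that factor.

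Third, and most seriously as a matter of logic, your explanation of the case $m\geq n$ is inverted. There $\rd_G(m_{\pm})=0$ because $1\in\SE_G(m_{\pm})$, i.e.\ the $G$-orbit of $\omega(0,m_{\pm})$ \emph{does} meet $H^{-}(0,m_{\pm})$: the case $m\geq n$ occurs only for type $(D_{3w},1/3,1)$, where $0\leq m-n<m$ and $\sgn(m+1)=\sgn(m-n)$, so the first syzygy already lands in the hammock. If, as you assert, ``the orbit escapes $H^{-}$ for every $r\geq 1$,'' then $\SE_G(m_{\pm})$ would be empty and $\rd_G(m_{\pm})=\infty$, not $0$. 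A smaller omission: for the fork vertices with $m<n$ you need not only the first index with $\rem{rm}_{n}<m$ but a classification of \emph{all} such $r$ in a suitable range (the analogue of Lemma \ref{lem-rem<m}: multiples of $\Fb_1(\kk)$ when $m\mid n$, and $r=p\Fb_1(\kk)+1$ otherwise), because the parity obstruction coming from $\phi$ and the fork exchange can eliminate the first several candidates before one actually enters $\SE_G(m_{+})$; this is what produces the values $2\Fb_1(\kk)-1$ and $\Fb_1(\kk)+\Fb_2(\kk)$ in the table. With these corrections your outline matches the paper's proof.
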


\medskip
We divide the proof of Theorem \ref{theo-typeD} into three cases:

\medskip
\quad {\bf Case 1}: $s\neq 3$ and $t<m$;

\quad {\bf Case 2}: $s\neq 3$ and $t=m_{\pm}$;

\quad {\bf Case 3}: $s=3$, that is, $\Lambda$ is of type $(D_4, u, 3)$.

\medskip
First we assume that $s\neq 3$.
When $t<m$, the figure of $H^-(x, t)$ in $\mathbb{Z}D_{m+1}$ is as follows.
\begin{center}
	\begin{tikzpicture}
	\pgfmathsetmacro{\ul}{0.2};
	\draw [-,fill=gray!10] (-8*\ul,0)--(-12*\ul,4*\ul)--(-4*\ul,12*\ul)--(4*\ul,12*\ul)--(12*\ul,4*\ul)--(8*\ul,0*\ul)--(0*\ul,8*\ul)-- cycle;
	\fill[fill=black]  (-8*\ul,0) circle (1pt) node[left] {$\scriptstyle{(x+m-1,1)}$};
	\fill[fill=black]  (-12*\ul,4 *\ul) circle (1pt) node[left] {$\scriptstyle{(x+m-1,t)}$};
	\fill[fill=black]  (-4*\ul,12*\ul) circle (1pt) node[left] {$\scriptstyle{(x+t,m_{-})}$};
	\fill[fill=black]  (12*\ul,4*\ul) circle (1pt) node[right] {$\scriptstyle{(x,t)}$};
	\fill[fill=black]  (4*\ul,12*\ul) circle (1pt) node[right] {$\scriptstyle{(x+1,m_{-})}$};
	\fill[fill=black]  (8*\ul,0*\ul) circle (1pt) node[right] {$\scriptstyle{(x+t-1,1 )}$};
	\node at (0*\ul,10*\ul) {$\scriptstyle{ H^-(x,t)}$};

	\end{tikzpicture}
	\end{center}
The group $G$ is generated by $\tau^n\phi$, where $\phi$ is the identity map or the map interchanging $(x,m_-)$ and $(x, m_+)$. Since $t<m$, we have $\phi(x,t)=(x,t)$ for all $x\in\mathbb{Z}$. Hence $G(x,t)=\{(x+kn, t)\mid k\in\mathbb{Z}\}$. Moreover, from the figure, we have $\omega(x,t)=(x+m,t)$.
\begin{Lem}\label{lem-typeD-low}
Suppose that $\Lambda$ is of type $(D_{m+1}, u, s)$ with $s\neq 3$. Let $n=u(2m-1)$. Then for each $t<m$, $r\in \SE_G(t)$ if and only if $\rem{rm}_n<t$ or $\rem{(r-1)m}_n\geq n-t$.
\end{Lem}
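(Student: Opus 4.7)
The plan is to reduce the membership condition $r\in\SE_G(t)$ to the stated arithmetic condition on $\rem{rm}_n$ and $\rem{(r-1)m}_n$ via three steps.

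First, I identify the orbit $G\omega^r(0,t)$. Since $t<m$, the vertex $(x,t)$ is neither $(x,m_+)$ nor $(x,m_-)$, so the automorphism $\phi$ (either the identity or the involution swapping $m_+$ and $m_-$) fixes $(x,t)$. Hence $(\tau^n\phi)^k(x,t)=(x+kn,t)$ for every $k\in\Z$. Combined with the identity $\omega(x,t)=(x+m,t)$ recorded just before the lemma, this yields
\[
G\,\omega^r(0,t)\;=\;\{(rm+kn,\,t)\,:\,k\in\Z\}.
\]

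Second, I describe the level-$t$ slice $T:=\{y\in\Z\,:\,(y,t)\in H^-(0,t)\}$. Reading the hexagonal picture of $H^-(0,t)$ together with the combinatorial construction of $H^-$ in \cite[4.4.2]{Iyama2005c}, this slice splits into two blocks
\[
T\;=\;\{0,1,\ldots,t-1\}\,\cup\,\{m-t,\,m-t+1,\ldots,m-1\}.
\]
The first block is the type-$A$ style contribution from the lower cone of $H^-(0,t)$ bounded by $(0,t)$ and $(t-1,1)$; the second block arises because, in $\mathbb{Z}D_{m+1}$, paths from $(0,t)$ propagating upward through the branching vertices $m_\pm$ and descending again on the opposite side reach level $t$ precisely at $(m-t,t),\ldots,(m-1,t)$. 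Since $n=u(2m-1)>m$, the set $T$ already lies in $[0,n-1]$, so reduction modulo $n$ is inessential here.

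Third, I combine these: $r\in\SE_G(t)$ iff $rm+kn\in T$ for some $k\in\Z$, iff $\rem{rm}_n\in T$. One subcondition, $\rem{rm}_n\in[0,t-1]$, is literally $\rem{rm}_n<t$. The other subcondition, $\rem{rm}_n\in[m-t,m-1]$, says $rm\equiv m-j\pmod n$ for some $1\leq j\leq t$; using $(r-1)m\equiv rm-m\pmod n$, this is equivalent to $\rem{(r-1)m}_n\in[n-t,n-1]$, that is, $\rem{(r-1)m}_n\geq n-t$.

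The main obstacle is the second step: one must rigorously verify, via the Iyama combinatorial recipe for $H^-$, that type-$D$ branching contributes precisely the second interval $[m-t,m-1]$ at level $t$ and no other vertices. This replaces a direct reading of the schematic figure by an explicit mesh-knitting argument in $\mathbb{Z}D_{m+1}$; once this slice description is secured, the passage to the arithmetic condition on $\rem{rm}_n$ is essentially formal.
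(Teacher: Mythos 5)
Your argument is correct and follows the paper's proof essentially verbatim: identify the $G$-orbit of $\omega^r(0,t)$ as $\{(rm+kn,t)\mid k\in\mathbb{Z}\}$ using that $\phi$ fixes all vertices of level $t<m$ and $\omega(x,t)=(x+m,t)$, read off the level-$t$ slice $[0,t)\cup[m-t,m)$ of $H^-(0,t)$ from the figure, and translate into the two remainder conditions. One small inaccuracy: the claim $n=u(2m-1)>m$ fails for type $(D_{3w},v/3,1)$ with small $v$ (e.g.\ $v=1$ gives $n=2w-1<3w-1=m$), but since you handle the block $[m-t,m)$ by shifting by $m$ rather than by literally intersecting $T$ with $[0,n)$, the stated equivalence survives this edge case.
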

\begin{proof}
By the discussion before the lemma, together with the figure of $H^-(0, t)$, we see that $r\in \SE_G(0,t)$ if and only if there exists integer $k$ such that
$$rm+kn\in [0, t)\cup [m-t, m)$$
This is equivalent to the condition $\rem{rm}_n<t$ or $\rem{(r-1)m}_n\geq n-t$.
\end{proof}

\begin{proof}[{\bf Proof of Theorem \ref{theo-typeD}(Case 1)}]
	Note that Lemma \ref{lem-typeD-low} is similar to Proposition \ref{prop-mob-SE}. Carrying out an identical proof as Subsection \ref{subsect-mob} gives the proof of Theorem \ref{theo-typeD} in Case 1.
\end{proof}

Now we switch to Case 2. Clearly $\rd_G(m_-)=\rd_G(m_+)$. It suffices to consider $\rd_G(m_+)$. For convenience, we define $\sgn(k)$ to be the the symbol "$+$" when $k$ is even, and "$-$" when $k$ is odd.   The figure of $H^-(x, m_+)$ is as follows.
\begin{center}
\begin{tikzpicture}
\pgfmathsetmacro{\ul}{0.3};
\draw [-,fill=gray!10] (0*\ul,6*\ul)--(2*\ul,6*\ul)--(3*\ul,5*\ul)--(4*\ul,6*\ul)--(6*\ul,6*\ul)--(7*\ul,5*\ul)--(8*\ul,6*\ul)--(10*\ul,6*\ul)--(11*\ul,5*\ul)--(12*\ul,6*\ul)--(6*\ul,0*\ul)--cycle;
\draw (-1*\ul,7*\ul)--(0*\ul,6*\ul)--(2*\ul,6*\ul)--(3*\ul,7*\ul)--(4*\ul,6*\ul)--(6*\ul,6*\ul)--(7*\ul,7*\ul)--(8*\ul,6*\ul)--(10*\ul,6*\ul)--(11*\ul,7*\ul)--(12*\ul,6*\ul)--(13*\ul,6*\ul);
\fill[fill=black]  (13*\ul,6*\ul) circle (1pt) node[right] {${\scriptstyle (x,m_+)}$};
\fill[fill=black]  (11*\ul,7*\ul) circle (1pt) node[above] {${\scriptstyle (x+1,\, m_-)}$};
\fill[fill=black]  (-1*\ul,7*\ul) circle (1pt) node[left] {${\scriptstyle (x+m-1,\, m_{\sgn(m-1)})}$};
\fill[fill=black]  (6*\ul,0*\ul) circle (1pt) node[right] {${\scriptstyle (x+m-2,\, 1)}$};
\node at (6*\ul,3*\ul) {${\scriptstyle H^-(x, m_+)}$};
\end{tikzpicture}
\end{center}
 Moreover, $\omega^r(0, m_+)=(rm, m_{\sgn(rm-r)})=(rm, m_{\sgn(rm+r)})$. We need the following lemma.
\begin{Lem}\label{lem-rem<m}
	Suppose that $n>m$ are positive integers and $\kk=\wseq{m,n}$ is the corresponding weight sequence. Then the following hold.
\begin{itemize}
	\item[$(1)$] If $m\mid n$,  then $\rem{rm}_n<m$ if and only if $r$ is a multiple of $\Fb_1(\kk)$. In this case $\rem{rm}_n=0$.
	\item[$(2)$]  If $m\nmid n$, then, for each $0<r<\Fb_1(\kk)+\Fb_2(\kk)$, $\rem{rm}_n<m$ if and only if $r=p\Fb_1(\kk)+1$ for some integer $1\leq p\leq k_2$.
\end{itemize}
	\end{Lem}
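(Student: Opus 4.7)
For (1), when $m\mid n$ the Euclidean algorithm terminates in a single step: $\kk = (n/m)$ with $\Fb_1(\kk) = n/m$. My plan is to exploit that every remainder $\rem{rm}_n$ is then automatically a multiple of $m$ (since $m$ divides both $rm$ and $n$), and the only multiple of $m$ lying in $[0,m)$ is $0$. Hence the condition $\rem{rm}_n < m$ collapses to $\rem{rm}_n = 0$, equivalently $n\mid rm$, equivalently $(n/m)\mid r$, which is exactly the statement.

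For (2), my plan rests on the elementary reformulation
$$\rem{rm}_n < m \iff kn \leq rm < kn+m \text{ for some integer } k\geq 0 \iff r = \lceil kn/m\rceil \text{ for some integer } k\geq 0,$$
so the positive integers $r$ with $\rem{rm}_n < m$ are exactly $r_k := \lceil kn/m\rceil$ for $k\geq 1$. Using $m\nmid n$, I write $n = k_1 m + s_2$ with $0 < s_2 < m$, so that $r_k = kk_1 + \lceil ks_2/m\rceil = k\Fb_1(\kk) + \lceil ks_2/m\rceil$. The next step is to pin down $\lceil ks_2/m\rceil$ using the further Euclidean step $m = k_2 s_2 + s_3$ with $0\leq s_3 < s_2$: since $m/s_2 = k_2 + s_3/s_2 \in [k_2, k_2+1)$, one has $\lceil ks_2/m\rceil = 1$ precisely when $1\leq k\leq k_2$, while $(k_2+1)s_2 = m + (s_2-s_3) \in (m, 2m)$ forces $\lceil (k_2+1)s_2/m\rceil = 2$.

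Assembling these facts, for $1\leq k\leq k_2$ I obtain $r_k = k\Fb_1(\kk)+1$, which lies in $(0, \Fb_2(\kk)] \subset (0, \Fb_1(\kk)+\Fb_2(\kk))$; for $k = k_2+1$, $r_k = (k_2+1)k_1 + 2 = \Fb_1(\kk)+\Fb_2(\kk)+1$ already exceeds the allowed range, and for $k\geq k_2+2$ the $r_k$ are even larger. Enumeration over $k\in\{1,\ldots,k_2\}$ therefore produces exactly the claimed set $\{p\Fb_1(\kk)+1 : 1\leq p\leq k_2\}$. The one delicate point I expect is the boundary $k = k_2$: one must confirm $\lceil k_2 s_2/m\rceil = 1$ uniformly whether $s_3 = 0$ (the case $d = 1$, with $k_2 s_2 = m$) or $s_3 > 0$ (the case $d\geq 2$, with $k_2 s_2 < m$). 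Both situations yield ceiling $1$, so ultimately no case split is required, but this small verification is what makes the two ranges $d=1$ and $d\geq 2$ collapse into the single clean statement of the lemma.
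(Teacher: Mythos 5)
Your proof is correct, but it takes a genuinely different route from the paper's. For part (1) the paper invokes Proposition \ref{prop-rm-range} to get $\rem{rm}_{n}\geq s_{1}=m$ for all $0<r<\Fb_1(\kk)$ and then concludes; you instead observe directly that $m$ divides both $rm$ and $n$, so every remainder $\rem{rm}_{n}$ is a multiple of $m$ and the condition $\rem{rm}_{n}<m$ collapses to $\rem{rm}_{n}=0$, i.e.\ $(n/m)\mid r$ --- more elementary, and it bypasses the Section 3 machinery entirely. For part (2) the paper writes each $0<r<\Fb_1(\kk)+\Fb_2(\kk)$ as $r=q+p\Fb_1(\kk)$ with $1\leq q\leq k_1$, $0\leq p\leq k_2$ via Lemma \ref{lemma-rs1-rem}, reads off $\rem{rm}_{n}=qm-ps_2$ from Lemma \ref{lemma-rem-range}, and extracts the characterization $q=1$, $p>0$; you instead parametrize \emph{all} solutions of $\rem{rm}_{n}<m$ as $r_k=\lceil kn/m\rceil$, $k\geq 1$, and evaluate the ceilings using only the two Euclidean steps $n=k_1m+s_2$ and $m=k_2s_2+s_3$. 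Your reformulation is valid (the key point being $0\leq rm-kn<m\leq n$, so $rm-kn$ really is the remainder), the monotonicity of $r_k$ guarantees no repetitions, and the two boundary checks you single out --- $ks_2\leq m$ for all $k\leq k_2$ whether $s_3=0$ or $s_3>0$, and $r_{k_2+1}=\Fb_1(\kk)+\Fb_2(\kk)+1$ falling just outside the stated range --- are exactly the delicate points and are handled correctly. What each approach buys: yours is self-contained and even produces the complete increasing list of all $r$ with $\rem{rm}_{n}<m$, not merely those below $\Fb_1(\kk)+\Fb_2(\kk)$; the paper's is shorter given that Lemmas \ref{lemma-rs1-rem} and \ref{lemma-rem-range} are already in place, and its weighted-Fibonacci bookkeeping is the form that scales to the deeper levels of the remainder sequence needed elsewhere (e.g.\ in Proposition \ref{prop-rm-range}).
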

\begin{proof}
For simplicity, we write $\Fb_l$ for $\Fb_l(\kk)$ for each $l$. Let $s_1=m, s_2, \cdots, s_{|\kk|+1}=0$ be the remainder sequence of $m,n$.

If $m\mid n$, then $|\kk|=1$, and $\Fb_1m=k_1m=n\equiv 0\pmod{n}$. By Proposition \ref{prop-rm-range}, $\rem{rm}_n\geq s_1=m$ for all $0<r<\Fb_1$. It follows that $\rem{rm}_n<m$ if and only if $r$ is divided by $\Fb_1$, that is, $r=a\Fb_1$ for some positive integer $a$. Clearly $\rem{a\Fb_1m}_n=0$ in this case.

If $m\nmid n$, then $s_2\neq 0$.  Recall that $\Fb_1=k_1$ and $\Fb_2=k_2\Fb_1+1$. Thus $\Fb_1+\Fb_2-1=k_2k_1+k_1$.  Each positive integer $r<\Fb_1+\Fb_2$ can be written as $r=q+p\Fb_1$ with $1\leq q\leq k_1$ and $0\leq p\leq k_2$.
By Lemma \ref{lemma-rs1-rem} and Lemma \ref{lemma-rem-range}, we deduce that $\rem{rm}_n = qm-ps_2$. Hence $\rem{rm}_n<m$ if and only if $q=1$ and $0<p\leq k_2$, that is, $r=p\Fb_1+1, 0<p\leq k_2$.
\end{proof}

Now we give the proof of Case 2.
\begin{proof}[{\bf Proof of Theorem \ref{theo-typeD}(Case 2)}]
For simplicity, we shall write $\Fb_l$ for $\Fb_l(\kk)$ in the proof.
Let $r$ be a positive integer, then
$r\in\SE_G(m_+)$ if and only if there exists some integer $k$ such that $$(\tau^n\phi)^k\omega^r(0,m_+)\in H^-(0,m_+),$$
Since $\phi$ is the identity map when $s=1$, and $\phi$ interchanges $(x, m_+)$ and  $(x, m_-)$ when $s=2$, the  condition above is equivalent to
$0\leq rm+kn<m$
and $$\sgn(rm+r)=\sgn(rm+kn)\mbox{ when }s=1; \sgn(rm+r+k)=\sgn(rm+kn)\mbox{ when }s=2.$$
Note that it may happen that $m\geq n$, however, only for type  $(D_{3w},1/3,1)$, where $m=3w-1$ and $n=2w-1$. In this case, $m+1-(m-n)=2w$ is even, that is, $\sgn(m+1)=\sgn(m-n)$. Moreover, $0\leq m-n<m$. It follows that $1\in\SE_G(m_+)$ always holds. Hence $\rd_G(m_+)=0$. Now assume that $n>m$. The condition above is further equivalent to $\rem{rm}_n<m$ and $T^s_r$ is even, where
$$T^s_r:=\begin{cases}
	rm+r-\rem{rm}_n, & s=1;\\
	rm+r+({\rem{rm}_n-rm})/{n}-\rem{rm}_n, &  s=2.\\
\end{cases}$$

If $m\mid n$, then $\rem{rm}_n<m$ if and only if $r=i\Fb_1$ for some positive integer $i$ by Lemma \ref{lem-rem<m}, and in this case $i\Fb_1m=in\equiv 0\pmod{n}$. Moreover,
$$T^1_{i\Fb_1}=i\Fb_1(m+1)=i(n+\Fb_1), T^2_{i\Fb_1}=i(n+\Fb_1-1),$$
If $n+\Fb_1+s$ is odd, then $T^s_{\Fb_1}=n+\Fb_1-s+1$ is even, this implies that $\Fb_1\in \SE_G(m_+)$, and thus $\rd_G(m_+)=\Fb_1-1$.
If $n+\Fb_1+s$ is even, then the condition above does not hold for $i=1$, but holds for $i=2$. It follows that $\Fb_1\notin\SE_G(m_+)$ and $2\Fb_1\in\SE_G(m_+)$. Hence $\rd_G(m_+)=2\Fb_1-1$.

Assume now that $m\nmid n$. By Lemma \ref{lem-rem<m}, for each positive integer $r<\Fb_1+\Fb_2$, $\rem{rm}_n<m$ if and only if $r=p\Fb_1+1, 0<p\leq k_2$.
Let us check whether $p\Fb_1+1\in \SE_G(m_+)$. It remains to check whether $T^s_{p\Fb_1+1}$ is even.
Using the fact that $\Fb_1m+s_2=n$ and $\Fb_2m-s_3=k_2n$, it is straightforward to check that $$T^s_{p\Fb_1+1}=p(n+\Fb_1-s+1)+1,$$
which is even if and only if both $p$ and $n+\Fb_1-s+1$ are odd. If $n+\Fb_1+s$ is even, taking $p=1$, we get that $\Fb_1+1\in\SE_G(m_+)$ and thus $\rd_G(m_+)=\Fb_1$.

 Assume that  $n+\Fb_1+s$ is odd. Then $T^s_{p\Fb_1+1}$ can  never be even, and $p\Fb_1+1\notin\SE_G(m_+)$ for all $0<p\leq k_2$. It follows that $\rd_G(m_+)\geq \Fb_1+\Fb_2-1$. Note that $(\Fb_1+\Fb_2)m\equiv -s_2+s_3 \pmod{n}$. Thus $\rem{(\Fb_1+\Fb_2)m}_n=n-s_2+s_3=k_1m+s_3\geq m$. Hence $\Fb_1+\Fb_2\notin\SE_G(m_+)$. Since $s_2>s_3\geq 0$, we deduce that
$$\rem{(\Fb_1+\Fb_2+1)m}_n=m-s_2+s_3<m.$$
Moreover, $T^s_{\Fb_1+\Fb_2+1}=(k_2+1)(n+\Fb_1-s+1)+2$
which is even for $s=1,2$ since $n+\Fb_1+s$ is odd. Therefore $\Fb_1+\Fb_2+1\in\SE_G(m_+)$, and $\rd_G(m_+)=\Fb_1+\Fb_2$. This finishes the proof of Case 2.
\end{proof}

Finally, let us consider the type $(D_4, u, 3)$. In this case, $m=3$ and $n=5u>3$. Without loss of generality, we assume that $\phi$ is induced by the $3$-cycle $\sigma=(1,m_-, m_+)$.
\begin{proof}[{\bf Proof of Theorem \ref{theo-typeD}(Case 3)}]
First, we look at the vertex $(0, 2)$. $\omega(0, 2)=(m,2)$ and $r\in\SE_G(2)$ if and only if $\rem{rm}_n<m$. If $m\mid n$, equivalently $3\mid u$, then $\rem{rm}_n<m$ happens only for $r=i\Fb_1$. In this case $\rem{\Fb_1m}_n=0<m$ and thus $\rd_G(2)=\Fb_1-1$. If  {$3\nmid u$}, then $m\nmid n$. By Lemma \ref{lem-rem<m}, $\rem{rm}_n\geq m$ for {$0<r\leq\Fb_1$} and $\rem{rm}_n<m$ for $r=\Fb_1+1$. Hence $\rd_G(2)=\Fb_1$ in this case.

For the vertex $(0,1)$, {$\omega(0,1)=(m,1)$}, and $r\in \SE_G(1)$ if and only if there is some integer $k$ such that
$$(\tau^n\phi)^k\omega^r(0,1)\in H^-(0, 1),$$
equivalently, $\rem{rm}_n<m$  and either of the following conditions holds

\medskip
(a) $\rem{rm}_n=1$ and $\sigma^k(1)\neq 1$ ($3\nmid k$);

(b) $\rem{rm}_n\neq 1$ and $\sigma^k(1)= 1$ ($3\mid k$),

\medskip
\noindent
where $k=(rm-\rem{rm}_n)/n$. For convenience, we write $T_r:=(rm-\rem{rm}_n)/n$.

If $3\mid u$, then $\rem{rm}_n<m$ if and only if $r=i\Fb_1$. In this case $\rem{i\Fb_1m}_n=0$ and $T_{i\Fb_1}=i$. Hence $3\Fb_1\in\SE_G(1)$ and $r\notin\SE_G(1)$ for all $r<3\Fb_1$, that is, $\rd_G(1)=3\Fb_1-1$.

Assume that $3\nmid u$. Then, for $r<\Fb_1+\Fb_2$,  $\rem{rm}_n<m$ if and only if
$r=p\Fb_1+1$ with $1\leq p\leq k_2$. In this case $\rem{rm}_n=m-ps_2<m$ and $T_r=(rm -\rem{rm}_n)/n=p$.

If $u\equiv 1\pmod{3}$, then $s_2=2$. It follows that $\rem{(\Fb_1+1)m}_n=m-s_2=1$ and $T_{\Fb_1+1}=1$ is not divided by $3$. Hence $\Fb_1+1\in\SE_G(1)$ and therefore $\rd_G(1)=\Fb_1$.

If $u\equiv 2\pmod{3}$, then $k_2=3$ and $s_2=1$. $\rem{(\Fb_1+1)m}_n=m-s_2=2$. However $T_{\Fb_1+1}=1$ is not divided by $3$. Thus $\Fb_1+1\notin\SE_G(1)$. Finally, $\rem{(2\Fb_1+1)m}_n=m-2s_2=1$ and $T_{2\Fb_1+1}=2$ is not divided  by $3$. This implies that $2\Fb_1+1\in\SE_G(1)$. Hence $\rd_G(1)=2\Fb_1$ in this case.

Finally, $\rd_G(m_\pm)=\rd_G(1)$ by symmetry. This finishes the proof.
\end{proof}

\section{Rigidity degrees of indecomposable modules: type $E$}
In this section, we assume that $\Lambda$ is of type $E$. The Dynkin graph of type $E$ is labelled as follows.
\begin{center}
	\begin{tikzpicture}
		\begin{scope}[scale=0.5]
			\node at (-1,2) {$E_6:$};
			\draw (0,4) node {${\scriptscriptstyle \bullet}$} node[left] {${\scriptstyle 5}$}--(1,3) node {${\scriptscriptstyle \bullet}$} node[left] {${\scriptstyle 4}$}
			--(2,2)
			node {${\scriptscriptstyle \bullet}$} node[left] {${\scriptstyle 3}$}
			--(3,1)
			node {${\scriptscriptstyle \bullet}$} node[left] {${\scriptstyle 2}$}
			--(4,0) node {${\scriptscriptstyle \bullet}$} node[left] {${\scriptstyle 1}$};
	\draw (2,2)--(3,2) node {${\scriptscriptstyle \bullet}$} node[right] {${\scriptstyle 6}$};
		\end{scope}

			\begin{scope}[scale=0.5,xshift=7cm]
				\node at (-1,2) {$E_7:$};
				\draw (0,4) node {${\scriptscriptstyle \bullet}$} node[left] {${\scriptstyle 6}$}--(1,3) node {${\scriptscriptstyle \bullet}$} node[left] {${\scriptstyle 5}$}
				--(2,2)
				node {${\scriptscriptstyle \bullet}$} node[left] {${\scriptstyle 4}$}
				--(3,1)
				node {${\scriptscriptstyle \bullet}$} node[left] {${\scriptstyle 3}$}
				--(4,0) node {${\scriptscriptstyle \bullet}$} node[left] {${\scriptstyle 2}$}
				--(5,-1) node {${\scriptscriptstyle \bullet}$} node[left] {${\scriptstyle 1}$};
		\draw (2,2)--(3,2) node {${\scriptscriptstyle \bullet}$} node[right] {${\scriptstyle 7}$};
			\end{scope}

			\begin{scope}[scale=0.5,xshift=14cm]
				\node at (-1,2) {$E_8:$};
				\draw (0,4) node {${\scriptscriptstyle \bullet}$} node[left] {${\scriptstyle 7}$}--(1,3) node {${\scriptscriptstyle \bullet}$} node[left] {${\scriptstyle 6}$}
				--(2,2)
				node {${\scriptscriptstyle \bullet}$} node[left] {${\scriptstyle 5}$}
				--(3,1)
				node {${\scriptscriptstyle \bullet}$} node[left] {${\scriptstyle 4}$}
				--(4,0) node {${\scriptscriptstyle \bullet}$} node[left] {${\scriptstyle 3}$}
				--(5,-1) node {${\scriptscriptstyle \bullet}$} node[left] {${\scriptstyle 2}$}
				-- (6,-2) node {${\scriptscriptstyle \bullet}$} node[left] {${\scriptstyle 1}$};
		\draw (2,2)--(3,2) node {${\scriptscriptstyle \bullet}$} node[right] {${\scriptstyle 8}$};
			\end{scope}
	\end{tikzpicture}
\end{center}
The type of $\Lambda$ is one of the following.
\begin{itemize}
	\item $(E_r, u, 1)$, $r=6,7,8$,  $u\in\mathbb{N}$;
	\item $(E_6, u, 2)$, $u\in\mathbb{N}$.
\end{itemize}
The stable Auslander-Reiten quiver is $\mathbb{Z}E_r/G$, where $G$ is a cyclic group generated by $\tau^n$ for type $(E_r, u, 1)$ where $n=um_{E_r}$. Here $m_{E_6}=11, m_{E_7}=17,m_{E_8}=29$. For type $(E_6, u, 2)$, the group $G$ is generated by $\tau^{n-6}\omega$. Let $h_{\Delta}=m_{\Delta}+1$ be the Coxeter number, and let $h_{\Delta}^*=h_{\Delta}/2$ be half of it.

The main result of this section is the following theorem.
\begin{Theo}\label{theo-typeE}
	Suppose that $\Lambda$ is of type $(\Delta, u, s)$, $\Delta=E_r, r=6,7,8$. Let $n=um_{\Delta}$ and $\kk=\wseq{h^*_{\Delta},n}$ be the weight sequence. Suppose that $\Fb_i:=\Fb_i(\kk)$ is the weighted Fibonacci sequence. Let $X$ be an indecomposable $\Lambda$-module corresponding to the vertex $t$ on $\Delta$. Then $\rd(X)$ can be read from Table \ref{tab-typeE}.

	\renewcommand{\arraystretch}{1.3}
\begin{table}[h!]
		\centering
		
		$\Delta=E_7,\quad  h^*_{\Delta}=9$

		\medskip
	\begin{tabular}{l|l|l|l|l|l|l|l|l|l}
	\hline
	$\rem{u}_9$ & 0 & 1& 2&3 &4 &5 & 6 & 7 & 8\\
	\hline
	$t=1$ & $\Fb_1-1$ & $\Fb_1+3\Fb_2$ & $\Fb_1+\Fb_2$ & $\Fb_3-1$ & $\Fb_1$ & $\Fb_1+3\Fb_2$ & $\Fb_2-1$ & $\Fb_1+\Fb_2$ & $\Fb_1$ \\
	\hline
	$t=2$ & $\Fb_1-1$ & $\Fb_1$ & $\Fb_1+\Fb_2$ & $\Fb_1$ & $\Fb_1$ & $\Fb_1$ & $2\Fb_1$ & $\Fb_1$ & $\Fb_1$ \\
	\hline
	$t=3,4,5$ & $\Fb_1-1$ & $\Fb_1$ & $\Fb_1$ & $\Fb_1$ & $\Fb_1$ & $\Fb_1$ & $\Fb_1$ & $\Fb_1$ & $\Fb_1$ \\
	\hline
	$t=6$ & $\Fb_1-1$ & $\Fb_1+2\Fb_2$ & $\Fb_1+3\Fb_2$ & $\Fb_1$ & $\Fb_1+\Fb_2$ & $\Fb_1$ & $2\Fb_1$ & $2\Fb_1$ & $\Fb_1$ \\
	\hline
	$t=7$ & $\Fb_1-1$ & $\Fb_1+\Fb_2$ & $\Fb_1$ & $\Fb_1$ & $\Fb_1$ & $\Fb_1$ & $\Fb_1$ & $2\Fb_1$ & $\Fb_1$ \\
	\hline
	\end{tabular}

       \medskip
	$\Delta=E_8, \quad h^*_{\Delta}=15$

	\medskip
	\begin{tabular}{l|l|l|l|l|l|l|l|l}
	\hline
	$\rem{u}_{15}$ & 0 & 1& 2&3 &4 &5 & 6 & 7 \\
	\hline
	$t=1$ & $\Fb_1-1$ & $\Fb_1+4\Fb_2$ & $\Fb_3$ & $\Fb_1+2\Fb_2$ & $\Fb_1+\Fb_2+\Fb_3$ & $\Fb_1$ & $2\Fb_3$ & $\Fb_1+\Fb_2$  \\
	\hline
	$t=2$ & $\Fb_1-1$ & $\Fb_1$ & $\Fb_1+\Fb_2$ & $\Fb_1+\Fb_2$ & $\Fb_1$ & $\Fb_1$ & $\Fb_1$ & $\Fb_1+\Fb_2$  \\
	\hline
	$t=3,4,5,6$ & $\Fb_1-1$ & $\Fb_1$ & $\Fb_1$ & $\Fb_1$ & $\Fb_1$ & $\Fb_1$ & $\Fb_1$ & $\Fb_1$  \\
	\hline
	$t=7$ & $\Fb_1-1$ & $\Fb_1+2\Fb_2$ & $\Fb_1+2\Fb_2$ & $\Fb_1$ & $\Fb_1+\Fb_2$ & $\Fb_1$ & $\Fb_1$ & $\Fb_1+\Fb_2$  \\
	\hline
	$t=8$ & $\Fb_1-1$ & $\Fb_1+\Fb_2$ & $\Fb_1$ & $\Fb_1$ & $\Fb_1$ & $\Fb_1$ & $\Fb_1$ & $\Fb_1$  \\
	\hline
	\end{tabular}

	\medskip
	\begin{tabular}{l|l|l|l|l|l|l|l}
	\hline
	$\rem{u}_{15}$ & 8 & 9& 10&11&12 &13 & 14 \\
	\hline
	$t=1$ & $\Fb_1+\Fb_2$ &  $\Fb_1$ &$2\Fb_1$ & $\Fb_1+\Fb_2$ & $2\Fb_1$ & $3\Fb_1$ & $\Fb_1$ \\
	\hline
	$t=2$ & $\Fb_1$ & $\Fb_1$ & $\Fb_1$ & $\Fb_1+\Fb_2$ & $2\Fb_1$ & $\Fb_1$ & $\Fb_1$ \\
	\hline
	$t=3,4,5,6$ & $\Fb_1$ &  $\Fb_1$ & $\Fb_1$ & $\Fb_1$ & $\Fb_1$ & $\Fb_1$ & $\Fb_1$  \\
	\hline
	$t=7$ & $\Fb_1$ &  $\Fb_1$ & $2\Fb_1$ & $\Fb_1$ & $2\Fb_1$ & $2\Fb_1$ & $\Fb_1$ \\
	\hline
	$t=8$ & $\Fb_1$ & $\Fb_1$ & $\Fb_1$ & $\Fb_1$ & $\Fb_1$ & $2\Fb_1$ & $\Fb_1$  \\
	\hline
	\end{tabular}

   $(E_6, u, s), s=1, 2$

	\begin{tabular}{l|l|l|l|l|l|l}
		\hline
		$\rem{u}_{6}$ & 0 & 1& 2&3 &4 &5\\
		\hline
		$t=3$ & $\Fb_1-1$ & $\Fb_1$ & $\Fb_1$ & $\Fb_1$ & $\Fb_1$ & $\Fb_1$  \\
		\hline
		$t=6$ & $\Fb_1-1$ & $\Fb_1+\Fb_2$ & $\Fb_1$ & $\Fb_1$ & $2\Fb_1$ & $\Fb_1$  \\
		\hline
		\end{tabular}

		\medskip
	   $(E_6, u, 1), \lfloor u/6\rfloor\mbox{ is even}$
	
	   $(E_6, u, 2), \lfloor u/6\rfloor\mbox{ is odd}$

	   \begin{tabular}{l|l|l|l|l|l|l}
		\hline
		$\rem{u}_{6}$ & 0 & 1& 2&3 &4 &5\\
		\hline
		$t=1,5$ & $\Fb_1-1$ & $\Fb_1+2\Fb_2$ & $\Fb_1+\Fb_3$ & $\Fb_1$ & $2\Fb_1$ & $3\Fb_1$  \\
		\hline
		$t=2,4$ & $\Fb_1-1$ & $\Fb_1$ & $\Fb_1$ & $\Fb_1$ & $\Fb_1$ & $2\Fb_1$  \\
		\hline
		\end{tabular}

		\medskip
	   $(E_6, u, 1), \lfloor u/6\rfloor\mbox{ is odd}$

	   $(E_6, u, 2),  \lfloor u/6\rfloor\mbox{ is even}$

	   \begin{tabular}{l|l|l|l|l|l|l}
		\hline
		$\rem{u}_{6}$ & 0 & 1& 2&3 &4 &5\\
		\hline
		$t=1,5$ & $2\Fb_1-1$ & $\Fb_1+4\Fb_2$ & $\Fb_1$ & $\Fb_1+\Fb_2$ & $2\Fb_1$ & $\Fb_1$  \\
		\hline
		$t=2,4$ & $2\Fb_1-1$ & $\Fb_1$ & $\Fb_1$ & $\Fb_1$ & $\Fb_1$ & $\Fb_1$  \\
		\hline
		\end{tabular}
	\caption{\label{tab-typeE} Rigidity degrees: Type $E$.}
\end{table}
\end{Theo}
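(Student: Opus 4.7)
The plan is to mimic the strategy used for types $A$ and $D$: by Lemma \ref{lem-SE-basic-property}, the rigidity degree of the indecomposable module corresponding to vertex $t$ of $E_r$ equals $\rd_G(0,t)$, which in turn is computed from $\SE_G(0,t)$. Thus I need, for each vertex $t$ of $E_r$, the smallest positive $r$ with $G\omega^r(0,t)\cap H^-(0,t)\neq\emptyset$, and then assemble the answers into Table \ref{tab-typeE}.

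First I would record explicit descriptions of $H^-(x,t)$ for every vertex $t$ of $E_r$ and of the action of $\omega$ on $\mathbb{Z}E_r$; as in the type $A$ and $D$ arguments, $\omega$ acts essentially as a shift by $h^*_{\Delta}$ in the first coordinate, composed (for the branch-side vertices) with the non-trivial Dynkin-diagram symmetry of $E_6$ or the trivial symmetry for $E_7, E_8$. The orbits of $G=\langle\tau^n\rangle$ are then $\{(x+kn,t)\mid k\in\Z\}$, while for type $(E_6,u,2)$ the generator $\tau^{n-6}\omega$ additionally toggles the Dynkin symmetry once per application. Translating $G\omega^r(0,t)\cap H^-(0,t)\neq\emptyset$ through these descriptions converts membership in $\SE_G(t)$ into a condition that $\rem{rh^*_{\Delta}}_n$ lie below $s_2$ (equivalently, near $0$ or near $n$), possibly together with a parity condition on $r$ and on $\lfloor u/h^*_{\Delta}\rfloor$ that encodes which Dynkin-automorphism orbit the image lies in.

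Second, I would apply Proposition \ref{prop-rm-range} to the weight sequence $\kk=\wseq{h^*_{\Delta},n}$ to read off the minimal $r$ satisfying each type of remainder condition. The smallest $r$ with $\rem{rh^*_{\Delta}}_n<s_2$, or in a prescribed sub-interval, is one of $\Fb_1(\kk)$, $\Fb_1(\kk)+1$, $\Fb_2(\kk)$, $\Fb_3(\kk)$, or small linear combinations thereof, and the exact choice is forced by the residue $\rem{u}_{h^*_{\Delta}}$, which determines $s_2$, $k_1$, $k_2$ and the relevant parities. Combining this with the parity constraint from the Dynkin symmetry gives the entry in each cell of the table.

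The main obstacle is two-fold. On the one hand, $H^-(x,t)$ for the branch vertex and its neighbours in $E_r$ is not a neat rectangle as in type $A$; one must spell out the shape on a vertex-by-vertex basis, which is what forces the row structure of Table \ref{tab-typeE}. On the other hand, since $h^*_{\Delta}\in\{6,9,15\}$ is small, the residue $\rem{u}_{h^*_{\Delta}}$ produces many distinct Euclidean-algorithm regimes, especially for $E_8$ where there are $15$ residues and $8$ vertices; for each, the signs in Lemma \ref{lemma-rs1-rem} and the parity condition coming from the Dynkin symmetry must be checked separately. The argument is therefore routine in spirit — once the set-up is correctly unwound, each cell is a direct application of Proposition \ref{prop-rm-range} — but the book-keeping of cases is the real labour and must be organised carefully, e.g.\ by grouping vertices $t$ that share the same $H^-$-shape and by splitting the residues of $u$ into the intervals where $s_2$ is constant.
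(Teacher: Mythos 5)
Your plan is essentially the paper's own method: translate $r\in\SE_G(t)$ into the condition that $\rem{rh^*_{\Delta}}_n$ land in an explicitly computed, vertex-dependent subset $\mathcal{X}_{\Delta}(t)\subseteq[0,h^*_{\Delta})$ read off from $H^-(0,t)$ (with the extra parity bookkeeping arising only in the $E_6$ cases, where $\omega$ involves the diagram symmetry, and with a further parity of $(rh^*_{\Delta}-\rem{rh^*_{\Delta}}_n)/n$ for $(E_6,u,2)$), then enumerate the candidates $r=a\Fb_{|\kk|}+1+\sum_{i\geq 2}\lambda_i\Fb_{i-1}$ via Lemma \ref{lemma-rs1-rem} and Proposition \ref{prop-rm-range} and minimize cell by cell over the residues of $u$. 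One correction to your second step: the relevant cutoff is $s_1=h^*_{\Delta}$ rather than $s_2$, and the sets $\mathcal{X}_{\Delta}(t)$ are in general not intervals (e.g.\ $\{0,3\}$ for an end vertex of $E_6$), so one cannot simply compare $\rem{rh^*_{\Delta}}_n$ against a threshold but must test each candidate remainder for membership individually --- which is precisely the finite case check the paper performs.
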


Let us explain how we calculate the rigidity degrees of indecomposable modules in type $E$.

\medskip
We keep the notations in Theorem \ref{theo-typeE}. For simplicity, we assume that $|\kk|=d+1$ and let $s_{-1}, s_0, \cdots, s_{d+1}$ be the remainder sequence. Note that $h_{\Delta}^*<n$ always holds. Hence $s_1=h_{\Delta}^*$ and $s_2<h_{\Delta}^*$. Each positive integer $r$ can be written as
$$r=a\Fb_{d+1}+\sum_{i=1}^{d+1}\lambda_i\Fb_{i-1}\quad (\star)$$ with $a\geq 0$, $0\leq \lambda_i\leq k_i$ for all $i$ and $\lambda_1>0$. It follows from Lemma \ref{lemma-rs1-rem} and the proof of Lemma \ref{lemma-rem-range} that
$$rh^*_{\Delta}\equiv \sum_{i=1}^{d+1}(-1)^{i-1}\lambda_is_i\pmod{n} \mbox{  and } (\lambda_1-1)h^*_{\Delta}\leq \sum_{i=1}^{d+1}(-1)^{i-1}\lambda_is_i\leq n.$$
It follows that $\rem{rh^*_{\Delta}}_n<h^*_{\Delta}$ only if $\lambda_1=1$, that is, $r=a\Fb_{d+1}+1+\sum_{i=2}^{d+1}\lambda_i\Fb_{i-1}$. This will be frequently used below.

Assume that $\Delta=E_7$ or $E_8$ and  define
$$\mathcal{X}_{\Delta}(t):=\{0\leq x< h^*_{\Delta}\mid (x,t)\in H^-(0,t)\}.$$
One can draw the picture of $H^-(0,t)$ by the algorithm given in \cite[4.4.2]{Iyama2005c} and write down the elements in $\mathcal{X}_{\Delta}(t)$ explicitly. Note that $\omega(x,t)=(x+h_{\Delta}^*,t)$. Since $G$ is generated by $\tau^n$ in this case, it is easy to see that a positive integer $r$ belongs to $\SE_G(t)$ if and only if $\rem{rh^*_{\Delta}}_n\in\mathcal{X}_{\Delta}(t)$. Clearly $\Fb_{d+1}\in\SE_G(t)$. Suppose $r\leq \Fb_{d+1}$ belongs to $\SE_G(t)$. We write $r$ as the form $(\star)$ with $a=0$.  Since all elements in $\mathcal{X}_{\Delta}(t)$ is less than $h_{\Delta}^*$, $r$ must be of the form  $1+\sum_{i=2}^{d+1}\lambda_i\Fb_{i-1}$.  The possibilities of $\lambda_i, i\geq 2$ are very limited. One can carefully check for which $\lambda_i, i\geq 2$, the remainder
$$h^*_{\Delta}+\sum_{i=2}^{d+1}(-1)^{i-1}\lambda_is_i.$$
is in $\mathcal{X}_{\Delta}(t)$, choose the smallest
$r=1+\sum_{i=2}^{d+1}\lambda_i\Fb_{i-1}$, and get the rigidity degree
$\rd_G(t)=\sum_{i=2}^{d+1}\lambda_i\Fb_{i-1}$.

Assume that $\Delta=E_6$. In this case $h^*_{\Delta}=6$ and $G$ is generated by $\tau^n\phi$, where $\phi$ is the identity when $s=1$ and $\phi=\tau^{-6}\omega$ when $s=2$. The automorphism $\omega$ satisfies the following conditions
$${\omega(x,t)=(x+t+3,6-t)}, t\leq 5, \quad \omega(x, 6)=(x+6,6), \quad \omega^2=\tau^{2h^*_{\Delta}}.$$
For each vertex $t$ of $\Delta$, one can draw the picture of $H^-(0,t)$, and find
$$\mathcal{X}_0(t)=\{0\leq x<h_{\Delta}^*\mid (x,t)\in H^-(0,t)\},$$
$$\mathcal{X}_1(t)=\{0\leq x<h_{\Delta}^*\mid \omega(x-h^*_{\Delta},t)\in H^-(0,t)\}.$$
To determine $\rd_G(t)$, we need the sequences $k_i, i\geq 2$, $s_i, i\geq 1$, and the parity of $\Fb_1$ which is { opposite to the parity of $\lfloor u/6\rfloor$ when $6\nmid u$
 and is the same as the parity of $\lfloor u/6\rfloor$ when $6\mid u$}. The sequence $k_i,i\geq 2$ is completely determined by $s_2$ which can only be a non-negative integer less than $6$. For each $t$, $r\in\SE_G(t)$ if and only if there is some integer $k$ such that $$(\tau^n\phi)^k\omega^r(0,t)\in H^-(0,t).$$
If $t=3,6$, then the above condition is equivalent to $\rem{rh^*_{\Delta}}_n\in\mathcal{X}_0(t)$, since $\phi(x,t)=(x,t)$ always holds for $t=3,6$. Then the method for $E_7$ and $E_8$ also applies.

Suppose that $t=1, 2$ and $s=1$. Then $r=2l\in\SE_G(t)$ if and only if there is an integer $k$ such that
$$\omega^{2l}(\tau^n)^{k}(0,t)\in H^-(0,t)$$
Equivalently, $(2lh^*_{\Delta}+kn,t)\in H^-(t)$. That is $\rem{rh^*_{\Delta}}_n\in\mathcal{X}_0(t)$. An odd integer $r=2l+1\in\SE_G(t)$ if and only if there is an integer $k$ such that $\omega^{2l+1}\tau^{kn}(0,t)\in H^-(0,t)$, equivalently $\omega((2l+1)h^*_{\Delta}-h^*_{\Delta}+kn,t)\in H^-(0,t)$, which is further equivalent to $\rem{rh^*_{\Delta}}_n\in\mathcal{X}_1(t)$. In either case, we have $\rem{rh^*_{\Delta}}_n<h^*_{\Delta}$. Each $r$ with $\rem{rh_{\Delta}^*}_n<h_{\Delta}^*$ is of the form
$$a\Fb_{d+1}+1+\sum_{i=2}^{d+1} \lambda_i\Fb_{i-1}, \quad a\geq 0, 0\leq \lambda_i\leq k_i.$$
Its parity can be deduced from that of $\Fb_1$ and the coefficients $a, \lambda_i, i\geq 2$ using the relation $\Fb_{i}=k_i\Fb_{i-1}+\Fb_{i-2}$. The remainder
$$\rem{rh_{\Delta}^*}_n=h_{\Delta}^*+\sum_{i=2}^{d+1}(-1)^{i-1}\lambda_is_i$$
can be calculated, and one can check whether it is in $\mathcal{X}_0(t)$ when $r$ is even, or in $\mathcal{X}_1(t)$ when $r$ is odd.

It remains to consider the case $s=2$ and $t=1,2$. In this case $r\in\SE_G(t)$ if and only if there is an integer $k$ such that
$$\omega^{r}(\tau^{n-6}\omega)^{2k}(0,t)\in H^-(0,t)\mbox{ or }\omega^{r}(\tau^{n-6}\omega)^{2k+1}(0,t)\in H^-(0,t)$$
If $r=2l$ is even, then this is equivalent to
$$\rem{rh^*_{\Delta}}_n\in \mathcal{X}_0(t) \mbox{ and } (rh^*_{\Delta}-\rem{rh^*_{\Delta}}_n)/n \mbox{ is even}, or $$
$$\rem{rh^*_{\Delta}}_n\in \mathcal{X}_1(t) \mbox{ and } (rh^*_{\Delta}-\rem{rh^*_{\Delta}}_n)/n \mbox{ is odd}.$$
Similarly, $r=2l+1\in\SE_G(t)$ if and only if
$$\rem{rh^*_{\Delta}}_n\in \mathcal{X}_1(t) \mbox{ and } (rh^*_{\Delta}-\rem{rh^*_{\Delta}}_n)/n \mbox{ is even, or} $$
$$\rem{rh^*_{\Delta}}_n\in \mathcal{X}_0(t) \mbox{ and } (rh^*_{\Delta}-\rem{rh^*_{\Delta}}_n)/n \mbox{ is odd}.$$
Again, in all cases, we need $\rem{rh^*_{\Delta}}_n<h^*_{\Delta}$. This implies that
$$r=a\Fb_{d+1}+1+\sum_{i=2}^{d+1}\lambda_i\Fb_{i-1},\quad a\geq 0,\quad 0\leq \lambda_i\leq k_i.$$
The remainder
$$\rem{rh^*_{\Delta}}_n=h^*_{\Delta}+\sum_{i=2}^{d+1}(-1)^{i-1}s_i$$
only depends on $\lambda_i\leq k_i, s_i, i\geq 2$, which has very limited choices. Again the parity of $\Fb_1$ and $a, \lambda_i, i\geq 2$ determine the parity of $r$.  By the proof of Lemma \ref{lemma-Bls1-rem}, we have
$$\Fb_{i-1}h^*_{\Delta}-(-1)^{i-1}s_i=\Fb_{i-1}(\kk')n,$$
where $\kk'$ is the sequence $k_2, k_3, \cdots$. Thus, $(rh^*_{\Delta}-\rem{rh^*_{\Delta}}_n)/n$ has the same parity as
$$a\Fb_{d+1}(\kk')+\sum_{i=2}^{d+1} \lambda_i\Fb_{i-1}(\kk').$$
which is easily deduced from $k_i, i\geq 2$ and the coefficients $a$,  $\lambda_i, i\geq 2$.

Altogether, given the parity of $\lfloor u/6\rfloor$ and $0\leq s_2<6$, one can come up with a computer algorithm to determine the coefficients $\lambda_i$ and $a$ so that $r$ is the smallest positive integer belonging to $\SE_G(t)$.

\section{Rigidity  dimension via maximal orthogonal modules}
Let us recall some basic facts on maximal orthogonal modules from \cite{Iyama2005c}. Let $\Lambda$ be an algebra, and let $M$ be a $\Lambda$-module. For a non-negative integer $r$, define
$$M^{\perp_r}:=\{Y\in\modcat{\Lambda}\mid \Ext_{\Lambda}^i(M, Y)=0\mbox{ for all }0<i\leq r\}.$$
One can similarly define ${}^{\perp_r}M$. $M$ is called a maximal
$r$-orthogonal module if
$$M^{\perp_r}=\add(M)={}^{\perp_r}M. $$
In case that $\Lambda$ is self-injective, this is equivalent to $M^{\perp_r}=\add(M)$, or equivalently  $\add(M)={}^{\perp_r}M$. The endomorphism algebra of a maximal $r$-orthogonal module is called an $(r+2)$-Auslander algebra which has global dimension at most $r+2$ and dominant dimension at least $r+2$. Particularly $\rd(M)\geq r$.

For a Dynkin quiver $\Delta$, a subset $M$ of vertices on $\mathbb{Z}\Delta$ is called a maximal $r$-orthogonal subset if
$$\mathbb{Z}\Delta\backslash M=\bigcup_{v\in M, 0<i\leq r} H^+(\omega^iv). $$
Note that maximal $r$-orthogonal subset is always $\tau\omega^r$-stable (\cite[Proposition 4.2.1]{Iyama2005c}).
Suppose that $\Lambda$ is a representation-finite self-injective algebra such that its stable AR-quiver $\Gamma_s(\Lambda)$ is isomorphic to $\mathbb{Z}\Delta/G$. Let $\pi: \mathbb{Z}\Delta\lra \Gamma_s(\Lambda)$ be the canonical map. It was proved in \cite[Theorem 4.2.2]{Iyama2005c} that a $\Lambda$-module $M$ is maximal $r$-orthogonal if and only if the set of preimages of the indecomposable non-projective direct summands of $M$ under $\pi$ is a maximal $r$-orthogonal subset of $\mathbb{Z}\Delta$.

Based on the formulae of rigidity degrees of indecomposable modules, it is possible to determine the rigidity dimension of some representation-finite self-injective algebras. The idea is as follows. For an indecomposable non-semisimple representation-finite self-injective algebra $\Lambda$, take an indecomposable module $X$ with the maximal rigidity degree $r$. If we are so lucky that $\Lambda\oplus X$ is a maximal $r$-orthogonal module, then $\gldim \End_{\Lambda}(\Lambda\oplus X)$ is finite and thus $\rigdim \Lambda\geq r+2$. By the maximality of $r$, every non-projective generator-cogenerator $M$ has rigidity degree at most $r$. It follows that $\rigdim \Lambda=r+2$.

\begin{Theo}\label{theo-max-orth-typeA}
Let $\Lambda$ be an indecomposable representation-finite non-semisimple self-injective algebra of type $(A_{m-1}, n/(m-1), 1)$.  Suppose that $X$ is an indecomposable $\Lambda$-module  corresponding to the vertex $(x,1)$ with $\rd(X)=r$. Then $\Lambda\oplus X$ is maximal $r$-orthogonal if and only if one of the following conditions holds.

\smallskip
$(1)$ $m=2$ and $n=2a$, $a\in\mathbb{N}$.  In this case, $r=2a-1$ and $\rigdim\Lambda=2a+1$.


$(2)$ $n=am-1$, $a\in\mathbb{N}$. In this case, $r=2(am-a-1)$ and  $\rigdim \Lambda=2(am-a)$.
\end{Theo}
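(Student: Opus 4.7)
The strategy is to translate ``$\Lambda\oplus X$ is maximal $r$-orthogonal'' into a combinatorial covering condition on $\mathbb{Z}A_{m-1}$. By the criterion of Iyama recalled at the start of this section, $\Lambda\oplus X$ is maximal $r$-orthogonal iff the $G$-orbit $G(0,1)=\{(kn,1):k\in\mathbb{Z}\}$ is a maximal $r$-orthogonal subset of $\mathbb{Z}A_{m-1}$, i.e., every vertex outside this orbit lies in $H^+(\omega^i(kn,1))$ for some $k\in\mathbb{Z}$ and some $0<i\leq r$. Direct computation gives $\omega^{2k}(0,1)=(km,1)$ and $\omega^{2k+1}(0,1)=(km+1,m-1)$, and the identity $H^+(v)=H^-(\omega^{-1}\tau v)$ together with the hammock formula of Section~\ref{rig-nakayama} yields $H^+(x,1)=\{(x-u,u+1):0\leq u\leq m-2\}$ (an anti-diagonal) and $H^+(x,m-1)=\{(x,t'):1\leq t'\leq m-1\}$ (a vertical segment). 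Consequently, a vertex $(y,t)$ is covered by an even-$i$ hammock iff $y+t-1\equiv km\pmod n$ for some $1\leq k\leq r/2$, and by an odd-$i$ hammock iff $y-1\equiv km\pmod n$ for some $0\leq k\leq(r-2)/2$.

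For the reverse direction in case~(1) with $m=2$, $n=2a$, $r=2a-1$, the only row is $t=1$; the even-$i$ anchors fill the even residues mod $2a$ and the odd-$i$ anchors fill the odd residues, so together with the orbit residue $0$ they exhaust $\{0,\dots,n-1\}$. For case~(2), where $n=am-1$ and $r=2(n-a)$, let $a$ be the inverse of $m$ modulo $n$ so that $am=n+1$. A vertex $(y,t)$ is uncovered iff $y\equiv 1-jm\pmod n$ for some $j\in\{1,\dots,a\}$ and $y+t\equiv 1-j'm\pmod n$ for some $j'\in\{0,\dots,a-1\}$, forcing $t\equiv(j-j')m\pmod n$ with $j-j'\in\{2-a,\dots,a\}$. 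Using $am=n+1$, a direct enumeration shows the only residue $(j-j')m\bmod n$ lying in $\{1,\dots,m-1\}$ occurs at $j-j'=a$, giving $(y,t)=(0,1)$, which is in the orbit; so every other vertex is covered.

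For the forward direction, assume $\Lambda\oplus X$ is maximal $r$-orthogonal. If $m=2$ then case~(1) or case~(2) applies according to the parity of $n$, so assume $m\geq 3$. \emph{Step A}: If $d=\gcd(m,n)\geq 2$, then all anchors $km\bmod n$ lie in the subgroup $d\mathbb{Z}/n\mathbb{Z}$, so the vertex $(0,2)$ (which exists since $m\geq 3$) is uncovered, a contradiction. \emph{Step B}: Assume $\gcd(m,n)=1$; let $a\in\{1,\dots,n-1\}$ satisfy $am\equiv 1\pmod n$ and write $am=cn+1$. The type constraint $n=u(m-1)$ forces $a\equiv 1\pmod{m-1}$, so either $a=1$ (which immediately gives $c=1$) or $a\geq m$. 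Suppose $c\geq 2$, so $a\geq m$. Set $k=\lfloor n/a\rfloor$ and $t=m-k$. A short calculation combining $am=cn+1$ with $a\equiv 1\pmod{m-1}$ shows $t\in\{2,\dots,m-1\}$ and
\[
at\bmod n=n+1-ka\in\{1,\dots,a\};
\]
the endpoint $n+1-ka=a$ would force $a\mid c-1$, hence $c\geq a+1\geq m+1$, contradicting $am=cn+1$ together with $n\geq m-1$. Therefore $at\bmod n\in\{1,\dots,a-1\}$, which lies in the ``both-bad'' range of the reverse-direction analysis, so some $(j,j')\neq(a,0)$ produces an uncovered vertex $(y,t)$, a contradiction. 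Hence $c=1$, i.e., $n=am-1$.

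Finally, in either case $\Lambda\oplus X$ is maximal $r$-orthogonal, so $\End_{\Lambda}(\Lambda\oplus X)$ is an $(r+2)$-Auslander algebra with finite global dimension and, by Theorem~\ref{relat-rig-dom}, dominant dimension exactly $r+2$, giving $\rigdim\Lambda\geq r+2$. Conversely, Proposition~\ref{prop-inSE}(3) and the $\omega$-symmetry of Lemma~\ref{lem-SE-basic-property}(2) imply that the rigidity degree at $t=1$ is the maximum over all indecomposable non-projective $\Lambda$-modules, so every generator-cogenerator $M$ has $\rd(M)\leq r$ and $\rigdim\Lambda\leq r+2$. The main obstacle is Step~B of the forward direction, where the interaction of the Bezout identity $am=cn+1$ with the type constraint $n=u(m-1)$ must be carefully exploited to pin down a concrete uncovered vertex via $k=\lfloor n/a\rfloor$; by contrast, the reverse implication in case~(2) becomes transparent once the identity $am=n+1$ is used to enumerate the residues $(j-j')m\bmod n$.
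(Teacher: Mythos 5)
Your translation of maximal $r$-orthogonality into the two covering conditions ($y+t-1\equiv km \pmod n$ for even-index hammocks, $y-1\equiv km\pmod n$ for odd-index ones) agrees with the paper, and your reverse direction for cases (1) and (2) is sound, granting that the stated values of $r$ really are $\rd(X)$ (this needs Theorem~\ref{theo-typeA}, which you use only tacitly). Step A is also fine. But your forward direction takes a genuinely different route from the paper — the paper first uses $\tau\omega^r$-stability together with Theorem~\ref{theo-typeA} to pin $r$ down to $2\Fb_d(\kk)$ or $2(\Fb_{d+1}(\kk)-\Fb_d(\kk))$ with $s_{d+1}=1$, and then runs a counting argument on $\mathcal{X}=\{\rem{bm}_n\mid 1\le b\le r/2\}$ against the Fibonacci recursion — and your route has a genuine gap at its crucial point, Step B.

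The gap is this: whether the vertex $(y,t)$ you construct (with $t=m-\lfloor n/a\rfloor$) is actually uncovered depends on the value of $r$, which you never determine. The ``both-bad'' ranges $j\in\{1,\dots,a\}$, $j'\in\{0,\dots,a-1\}$ were derived in your reverse-direction analysis under the hypotheses $am=n+1$ \emph{and} $r=2(n-a)$; in Step B you have $am=cn+1$ with $c\ge 2$ and no control on $r$ at all. If $r/2$ were larger than $n-a$, the even and odd anchor sets would be larger, the bad ranges correspondingly smaller, and your candidate vertex could well be covered — so the contradiction does not follow as written. What is missing is the observation that, once $\gcd(m,n)=1$, Theorem~\ref{theo-typeA} forces $r=\rd_G(1)$ to be even (the odd values $2\Fb_l-1$ occur at $t=1$ only when $s_{|\kk|}\ge 2$), and then $r+1=2(r/2)+1\in\SE_G(1)$ together with Proposition~\ref{prop-inSE}(2) gives $\rem{(r/2)\,m}_n=n-1$, i.e.\ $r/2\equiv -a\pmod n$; combined with $0<r/2<n$ (which needs $2\Fb_{d+1}(\kk)\in\SE_G(1)$) this yields $r/2=n-a$ exactly. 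Only with this identity in hand do your bad ranges, and hence the uncovered vertex, materialize. With that supplement your Step B does close the argument and is arguably more direct than the paper's Fibonacci-inequality chase, but as written the proof is incomplete.
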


\begin{proof}
	Let $\kk=\wseq{m,n}$ be the weight sequence of $m,n$. Note that $m\geq 2$ since $\Lambda$ is not semisimple. For simplicity, we assume that $|\kk|=d+1$ and let $\Fb_i:=\Fb_i(\kk), i=1, 2,\cdots, d+1$ be the corresponding weighted Fibonacci sequence.

 The stable AR-quiver of $\Lambda$ is of the form $\mathbb{Z}A_{m-1}/G$, where $G=\langle\tau^n\rangle$. Without loss of generality, one can assume that $X$ corresponds to the vertex $(0,1)$ on $\mathbb{Z}A_{m-1}$. By \cite[Theorem 4.2.2]{Iyama2005c}, $\Lambda\oplus X$ is maximal $r$-orthogonal if and only if the orbit $G(0,1)$ is a maximal $r$-orthogonal subset of $\mathbb{Z}A_{m-1}$, that is, $G(0,1)$ satisfies the following condition:
$$\mathbb{Z}A_{m-1}\backslash G(0,1)=\bigcup_{v\in G(0,1), 0<i\leq r}H^+(\omega^iv) \quad (\dagger)$$
That is, for each vertex $(x,t)$ in
$$S:=\{(x,t)\mid
		0\leq x<n, 1\leq t\leq m-1
	\}\backslash\{(0,1)\},$$
there is a positive  integer $i\leq r$ and an integer $a$ such that $(x+an,t)\in H^+(\omega^i(0,1))$. If $i=2b$ is even, this is equivalent to $x+an=bm-t+1$. If $i=2b-1$ is odd, this is equivalent to $x+an=(b-1)m+1$.

\medskip
Assume that $r=2l$ is even. Then $\omega^r=\tau^{lm}$.  Since $G(0,1)$ is $\tau\omega^r$-stable, there is some integer $a$ such that $\tau\omega^r(0,1)=(an,1)$. This implies that $\rem{lm}_n=n-1$.  By checking $\rd_G(1)$ given in Theorem \ref{theo-typeA}, this happens if and only if

\medskip
\begin{itemize}
	\item $r=2\Fb_{d}$, $d$ is odd and $s_{d+1}=1$, or
	\item $r=2(\Fb_{d+1}-\Fb_{d})$, $d$ is even and $s_{d+1}=1$.
\end{itemize}

\medskip\noindent
By the discussion above, condition ($\dagger$) is equivalent to that, for each $(x,t)\in S$, there is $0<b\leq l$ such that $\rem{bm}_n\equiv x-1+t\pmod{n}$ or $\rem{(b-1)m}_n\equiv x-1\pmod{n}$. We denote this condition by ($\ddagger$).

\medskip
If $d=-1$, then $r=2\Fb_{d}=0$,  $m=k_0n$ for some positive integer $k_0$ and $n=s_{d+1}=1$. If $m>2$,  then $(0,2)\in S$. There is no positive integer $0<b\leq \Fb_d=0$ satisfying the above condition, and $\Lambda\oplus X$ is not maximal $r$-orthogonal. If $m=2$, then $X$ is the only indecomposable non-projective $\Lambda$-module. Hence $\Lambda\oplus X$ is maximal $0$-orthogonal and $\rigdim\Lambda=2$. Taking $a=1$, it is easy to check that $n=am-1$ and $\rigdim\Lambda=2(am-a)$.

Now assume that $d\geq 0$. Then $n=s_0\geq s_d>s_{d+1}=1$ and $k_{d+1}=s_d/s_{d+1}>1$. Hence
$$\Fb_{d+1}=k_{d+1}\Fb_{d}+\Fb_{d-1}\geq 2\Fb_{d}>\Fb_d.$$
It follows that $r=2l>0$ with $0<l<\Fb_{d+1}$ in both cases above. Set $$\mathcal{X}=\{\rem{bm}_n\mid 1\leq b\leq l\}.$$
Since $l<\Fb_{d+1}$, the remainder $\rem{bm}_n, 1\leq b\leq l$ are pairwise distinct. Hence $l=|\mathcal{X}|$.  Since  $\rem{lm}_n=n-1$,  there cannot be any integer $0<b\leq l$ such that $(b-1)m\equiv -1\pmod{n}$. For the vertices $(0,t), t=2,\cdots, m-1$, the condition ($\ddagger$) holds if and only if   $\{\rem{b}_n\mid 1\leq b \leq m-2\}\subseteq \mathcal{X}$.
For vertices $(x,t)$ with $0<x<n$, the condition ($\ddagger$) means that either $x-1$ or $\rem{x+t-1}_n$ belongs to $\mathcal{X}$.  Altogether $\Lambda\oplus X$ is maximal $2l$-orthogonal if and only if the following  conditions hold.

\medskip
(a) $\{\rem{b}_n\mid 1\leq b\leq m-2\}\subseteq \mathcal{X}$;

(b) For each $0<x<n$, either $x-1$ or $\rem{x-1+t}_n, t=1,\cdots,m-1$ belongs to $\mathcal{X}$.

\medskip
Assume that  $m>n$. If $m\geq n+2$, then $n\in\{1,\cdots, m-2\}$ and consequently $\rem{n}_n=0\notin\mathcal{X}$, and (a) is not satisfied. In case that $m=n+1$, one has $d=0$ and $r=2(\Fb_1-\Fb_0)=2(n-1)$,  $\mathcal{X}=\{1,2,\cdots, m-2\}$. Conditions (a) and (b) are both satisfied. $\Lambda\oplus X$ is a maximal $2(m-2)$-orthogonal module and $\rigdim \Lambda=2(m-1)$.

$m=n$ cannot happen since $d\geq 0$.

Now assume that $m<n$. Then $s_1=m$ and $n=k_1m+s_2$. The conditions (a) and (b) above imply that there are at most $k_1$ positive integers less than $n$ not in $\mathcal{X}$, that is, $l=|\mathcal{X}|\geq n-1-k_1$.  Moreover,
$$l\geq n-1-k_1\geq \Fb_{d+1}-(\Fb_1+\Fb_0)\geq \Fb_{d+1}-\Fb_2.$$
If $l=\Fb_d$, then $d$ is odd and $\Fb_d\leq \Fb_{d+1}-\Fb_d\leq \Fb_2$. Since $d\geq 0$, we have $d=1$, $s_2=s_{d+1}=1$. Thus $n=k_1m+1$. The inequality above then implies that $\Fb_1\geq \Fb_2-(\Fb_1+\Fb_0)$, that is, $\Fb_2\leq 2\Fb_1+\Fb_0$. Hence $m=k_2\leq 2$. This forces $m=2$ and $n=2k_1+1=2(k_1+1)-1$. Now $\mathcal{X}=\{n-1, n-3, \cdots, 2\}$. It is straightforward to check that the conditions (a) and (b) hold. Hence $\Lambda\oplus X$ is a maximal $2k_1$-orthogonal module. Taking $a=k_1+1$, one has $\rigdim\Lambda=2k_1+2=2(am-a)$.

If $l=\Fb_{d+1}-\Fb_d$, then $d$ is even and the inequality above implies that $\Fb_d\leq \Fb_2$. Therefore $d=0$ or $2$. $d=0$ cannot happen, otherwise, $m=s_1=s_{d+1}=1$ which is impossible. Hence $d=2$. Again the inequality provides
$$\Fb_3-\Fb_2\geq \Fb_3-(\Fb_1+\Fb_0),$$
equivalently $\Fb_2\leq \Fb_1+\Fb_0$. This forces $k_2=1$ and thus $n=k_1m+s_2$ and $m=s_2+1$. That is, $n=k_1m+(m-1)$. It is straightforward to check that $\Fb_3=n$,
$$\mathcal{X}=\{1,\cdots,n-1\}\backslash\{n-m, \cdots, n-k_1m\}.$$
The conditions (a) and (b) are both satisfied. Taking $a=k_1+1$, one has $\Fb_3-\Fb_2=n-(k_1+1)=2(am-a-1)$. Hence  $\Lambda\oplus X$ is a maximal $2(am-a-1)$-orthogonal module and  $\rigdim\Lambda=2(am-a)$.

\medskip

If $r=2l+1$ is odd, then
$$\tau\omega^r(0,1)=(lm+2,m-1)=(an,1)$$
if and only if $m=2$ and $lm+2=an$. By checking $\rd_G(1)$ in Theorem \ref{theo-typeA}, this happens if and only if $s_{d+1}\geq 2$. In this case $r=2\Fb_{d+1}-1$. If $d\geq 0$, then $2\leq s_{d+1}\leq s_1\leq m=2$. It follows that $d=0$, $n>m$ and $n=k_1m$ for some $k_1>1$. If $d=-1$, then $2\leq s_0=n\leq m=2$. Altogether We have $m=2$ and $n=2a$ for some positive integer $a$. In this case $\rd(X)=2a-1$, and $H^+(\omega^i(0,1))=\{(i,1)\}$ for all $i\leq 2a-1$. It follows easily that $A\oplus X$ is a maximal $(2a-1)$-orthogonal module and $\rigdim \Lambda=2a+1$. \end{proof}

\begin{Theo}\label{theo-max-orth-typeA2}
	Let $\Lambda$ be an indecomposable representation-finite non-semisimple self-injective algebra of type $(A_{m-1}, u, 2)$, and let $n=u(m-1)-m/2$. Suppose that $X$ is an indecomposable $\Lambda$-module  corresponding to the vertex $(x,1)$ with $\rd(X)=r$. Then $\Lambda\oplus X$ is maximal $r$-orthogonal if and only if  $n=am-1$ for some integer $a>1$, $r=2am+m-2a-3$. In this case $\rigdim\Lambda=(2a+1)(m-1)$.
\end{Theo}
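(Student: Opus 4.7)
The plan is to follow the template of the proof of Theorem \ref{theo-max-orth-typeA}: reduce to a covering condition in $\mathbb{Z}A_{m-1}$, extract necessary conditions from $\tau\omega^r$-stability, and then pin down the exact Euclidean data by a direct check. By \cite[Theorem 4.2.2]{Iyama2005c}, $\Lambda\oplus X$ is maximal $r$-orthogonal if and only if the $G$-orbit $G(0,1)$ is a maximal $r$-orthogonal subset of $\mathbb{Z}A_{m-1}$, where $X$ is taken to correspond to $(0,1)$. Since $g:=\tau^n\omega$ satisfies $g^2=\tau^N$ with $N=2n+m$, the orbit reads $g^{2k}(0,1)=(kN,1)$ and $g^{2k+1}(0,1)=(kN+n+1,m-1)$; it sits entirely in the two boundary layers $t=1$ and $t=m-1$. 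Using $\omega^2=\tau^m$ I obtain $\omega^{2l}(0,1)=(lm,1)$ and $\omega^{2l+1}(0,1)=(lm+1,m-1)$, and an inspection of the parallelogram shape of $H^-$ shows that $H^+(\omega^{2l}(0,1))$ is an antidiagonal line meeting each layer in one vertex, while $H^+(\omega^{2l+1}(0,1))$ is a triangular block.

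The first step is to derive $\rem{rM}_N=N-1$ as a necessary consequence of $\tau\omega^r$-stability of the orbit. Computing $\tau\omega^r(0,1)$ in both parities of $r$ and demanding that it lie in $G(0,1)$ gives, after using $2M\equiv m\pmod N$ (from $M=n+m$ and $N=2n+m$), the single condition $\rem{rM}_N=N-1$. Consulting Table \ref{tab-typeA} for $s=2,\ t=1$, the value $\rd(X)=r$ is exactly one of $\Fb_{|\kk|}(\kk)-1$, $\Fb_{|\kk|-1}(\kk)$, or $\Fb_{|\kk|}(\kk)-\Fb_{|\kk|-1}(\kk)$, according as $s_{|\kk|}\geq 2$, or $s_{|\kk|}=1$ with $|\kk|$ even, or $s_{|\kk|}=1$ with $|\kk|$ odd. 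By Lemma \ref{lemma-Bls1-rem} the first case forces $\rem{rM}_N=N-M$, hence $M=1$, which is impossible since $M=n+m\geq 3$. So $s_{|\kk|}=\gcd(M,N)=\gcd(n+m,\,2n+m)=\gcd(m,n)=1$ is necessary, and only the two remaining cases survive.

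The combinatorial heart of the proof is the second step: rule out the even-$|\kk|$ possibility and pin down the precise weight sequence. The Euclidean algorithm on $M=n+m$, $N=2n+m$ always begins with $k_1=1$, $s_2=n$, and (when $n>m$) continues with $k_2=1$, $s_3=m$; the subsequent quotients depend on $n$. The plan is to translate the covering condition
\[ \mathbb{Z}A_{m-1}\setminus G(0,1)=\bigcup_{v\in G(0,1),\ 0<i\leq r}H^+(\omega^iv) \]
into a system of remainder constraints on $\rem{iM}_N$ for $1\leq i\leq r$, using the explicit antidiagonal/triangular shapes of $H^+(\omega^i(0,1))$ and the description of these remainders supplied by Proposition \ref{prop-rm-range}. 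A counting and covering analysis should then show that every weight sequence other than $(1,1,a-1,1,m-1)$ (in particular every sequence with $|\kk|$ even, and every odd-length sequence of length $\neq 5$ or with different quotients) leaves some interior vertex $(x,t)$ with $2\leq t\leq m-2$ uncovered, while the sequence $(1,1,a-1,1,m-1)$ with $a\geq 2$ tessellates the complement exactly. This weight sequence is equivalent to $n=am-1$ with $a>1$.

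With the weight sequence $(1,1,a-1,1,m-1)$ fixed, the recursion gives $\Fb_4(\kk)=2a+1$ and $\Fb_5(\kk)=(m-1)(2a+1)+2a-1$, hence
\[ r=\Fb_5(\kk)-\Fb_4(\kk)=(m-1)(2a+1)-2=2am+m-2a-3, \]
and $\rigdim\Lambda=r+2=(2a+1)(m-1)$. The main obstacle is indeed the covering analysis in the second step: one must rule out, by explicit uncovered vertices, all weight sequences except $(1,1,a-1,1,m-1)$, and then verify tessellation in the surviving case. The delicacy comes from the asymmetry of the odd-$i$ and even-$i$ regions (one antidiagonal, one triangular) and from the fact that the orbit lives only in the boundary layers $t=1,m-1$, so all interior layers $2\leq t\leq m-2$ must be covered by oblique translates of $H^+(\omega^i(0,1))$, controlled through Proposition \ref{prop-rm-range}.
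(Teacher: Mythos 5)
Your setup matches the paper's: reduce to the covering condition $\mathbb{Z}A_{m-1}\backslash G(0,1)=\bigcup_{v,i}H^+(\omega^iv)$ via \cite[Theorem 4.2.2]{Iyama2005c}, extract $\rem{rM}_N=N-1$ from $\tau\omega^r$-stability, use Table \ref{tab-typeA} to see that only $r=\Fb_{|\kk|-1}$ ($|\kk|$ even) or $r=\Fb_{|\kk|}-\Fb_{|\kk|-1}$ ($|\kk|$ odd) with $\gcd(m,n)=1$ can occur, and your terminal computation from the weight sequence $(1,1,a-1,1,m-1)$ correctly yields $r=\Fb_5-\Fb_4=2am+m-2a-3$ and $\rigdim\Lambda=(2a+1)(m-1)$. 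However, the decisive step is missing: you announce that ``a counting and covering analysis should then show'' that every other weight sequence fails, and you yourself label this the main obstacle, but no mechanism is given for the elimination. The paper supplies one: setting $\mathcal{X}=\{\rem{bM}_N\mid 1\leq b\leq r\}$, the covering condition forces $\{1,\dots,m-2\}\subseteq\mathcal{X}$ and $|\mathcal{X}\cap[m,m+n-1]|\geq n-(k_3+1)$, and a bijection between $\mathcal{X}\cap[m,m+n-1]$ and $\mathcal{X}\cap[m+n,N-1]$ gives $r=|\mathcal{X}|\geq N-1-2(k_3+1)\geq \Fb_{|\kk|}-(\Fb_3+\Fb_2)$; combined with $\Fb_{|\kk|}>2\Fb_{|\kk|-1}$ this kills the odd-length$-1$ case outright and forces $|\kk|=5$, $k_4=1$, $s_5=1$ in the other, which is exactly $n=am-1$. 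Without some such quantitative lower bound on $r$ in terms of the Fibonacci data, ``rule out all other weight sequences'' is not a finite check and your argument does not close.

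Two further points. First, you never treat the degenerate parameters: for $u=1$ one has $n=m/2-1<m$, so your assumed Euclidean pattern $k_1=1$, $s_2=n$, $k_2=1$, $s_3=m$ breaks down, and the paper disposes of $u=1$ and $n=m$ by separate ad hoc arguments (e.g.\ $\gcd(M,N)>1$ when $m=6,n=2$ or $m=n=4$, an explicit uncovered vertex when $n>2$); these cases must be excluded before the general counting applies. Second, a small factual slip: for $v$ in the orbit $G(0,1)$ (which lies in layers $1$ and $m-1$), both $H^+(\omega^{2l}v)$ and $H^+(\omega^{2l+1}v)$ are degenerate parallelograms, i.e.\ lines meeting each layer in exactly one vertex --- there is no ``triangular block.'' This does not affect the strategy, but the correct shape (one vertex per layer per index $i$) is precisely what makes the paper's cardinality count of $\mathcal{X}$ work.
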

\begin{proof}
Set $M=m+n$ and $N=m+2n$. Let $\kk=\wseq{M, N}$ be the weight sequence, and let $\Fb_i:=\Fb_i(\kk)$ be the corresponding weighted Fibonacci sequence. Suppose that $|\kk|=d+1$. Note that $m=2p+2\geq 4$ and $d\geq 1$.

Similarly as in the proof of Theorem \ref{theo-max-orth-typeA}, one can show that $G(0,1)$ is $\tau\omega^r$-stable if and only if $\rem{rM}_N=N-1$,  if and only if $M, N$ are coprime and $r=\Fb_d$ with $d$ odd or $r=\Fb_{d+1}-\Fb_d$ with $d$ even.  $G(0,1)$ is a maximal $r$-orthogonal subset of $\mathbb{Z}A_{m-1}$ if and only if for each vertex $(x,t)$ in the set
$$S:=\{(x,t)\mid 0\leq x\leq n, 1\leq t\leq m-1, \mbox{ or } n<x\leq m+n-2, x+t<m+n\}\backslash\{(0,1)\},$$
there is  $1\leq b\leq r$ such that $\rem{bM}_N=x+t-1$ or $\rem{(b-1)M}_N\equiv x-1 \pmod{N}$.

If $u=1$, then $n=m/2-1$. $n=1$ if and only if $m=4$. In this case $r=1$, and one can directly check that $G(0,1)$ is not maximal $1$-orthogonal. If $n=2$, then $m=6$. Then $(M, N)=2$ and thus $G(0,1)$ cannot be $\tau\omega^r$-stable. Now assume that $n>2$. Then $M=k_2n+2$ and $n=k_3\cdot 2+1$ since $s_{d+1}=1$, $d=3$ and $r=\Fb_3$. Taking  $(x,t)=(0, 2)$,
for each $0<b\leq \Fb_3$, we have $\rem{bM}_N\geq s_3=2$. Since $\rem{\Fb_3M}_N=N-1$, there cannot be any integer $l$ less that $\Fb_3$ such that $\rem{lM}_N=N-1$. This shows that there is no integer $1\leq b\leq r$ such that $\rem{bM}_N=0+2-1=1$ or $\rem{(b-1)M}_N\equiv 0-1\equiv N-1\pmod{N}$. Hence $G(0,1)$ is not maximal $\Fb_3$-orthogonal.

Now assume that $u>1$. Then $n=u(m-1)-m/2\geq m$, and $n=m$ if and only if $m=4$ and $u=2$. If $m=n=4$, then $(M, N)=4$ and thus $G(0,1)$ is not $\tau\omega^r$-stable.

Finally, we assume that $n>m$. Then $d\geq 3$, $s_2=n$, $s_3=m$ and $n=k_3m+s_4$. The weighted Fibonacci sequence satisfies $\Fb_0=\Fb_1=1$, $\Fb_2=2$, $\Fb_3=2k_3+1$ and $\Fb_4=k_4\Fb_3+\Fb_2$. Set
$$\mathcal{X}=\{\rem{bM}_N\mid 1\leq b\leq r\}.$$
Considering the vertices $(0,t), t=2, \cdots, m-1$, we deduce that $\{1,2,\cdots,m-2\}\subseteq \mathcal{X}$. Using the other vertices in $S$, one can deduce that $|\mathcal{X}\cap [m,m+n-1]|\geq n-(k_3+1)$.
Note that there is a bijection between $\mathcal{X}\cap [m,m+n-1]$ and $\mathcal{X}\cap [m+n,N-1]$ sending $\rem{bM}_N$ to $\rem{(b-1)M}_N$. It follows that $|\mathcal{X}|\geq N-1-2(k_3+1)$. No matter $r=\Fb_d$ or $r=\Fb_{d+1}-\Fb_d$, we have $r<\Fb_{d+1}\leq N$. It follows that $\rem{bM}_N$, $1\leq b\leq r$ are pairwise distinct and thus $r=|\mathcal{X}|$. Hence
$$r\geq N-\Fb_3-\Fb_2\geq \Fb_{d+1}-(\Fb_3+\Fb_2).$$
Note that $s_{d+1}=1$ and $k_{d+1}=s_{d}\geq 2$. Thus $\Fb_{d+1}=k_{d+1}\Fb_d+\Fb_{d-1}>2\Fb_{d}$. If $r=\Fb_d$ with $d$ odd, then $\Fb_d>2\Fb_{d}-(\Fb_3+\Fb_2)$ and thus $\Fb_d<\Fb_4$. This forces that $d=3$. But in this case $s_4=1$ and $k_4=m$ and therefore $\Fb_3\geq \Fb_4-\Fb_3-\Fb_2=k_4\Fb_3-\Fb_3=(m-1)\Fb_3$ which is a contradiction since $m\geq 4$. If $r=\Fb_{d+1}-\Fb_d$ with $d$ even, then $\Fb_d\leq \Fb_3+\Fb_2\leq \Fb_4$. This forces that $d=4$ and $\Fb_4=\Fb_3+\Fb_2$. Hence $k_4=1$, $s_5=1$, $m=s_4+s_5$, and thus $n=k_3m+s_4=(k_3+1)m-1$. Let $a=k_3+1$. One can check that
$$r=\Fb_5-\Fb_4=2am+m-2a-3$$
and that $G(0,1)$ is indeed a maximal $r$-orthogonal subset of $\mathbb{Z}A_{m-1}$.
\end{proof}

For type $D$, there is no vertex $(0,t)$ such that $G(0,t)$ is a maximal $\rd_G(t)$-orthogonal subset. However, for type $E$, we have the following theorem.

\begin{Theo}\label{theo-max-orth-typeE}
	Let $\Lambda$ be an indecomposable representation-finite non-semisimple self-injective
	algebra of type $(E_m, u, s)$. Suppose that $X$ is an indecomposable $\Lambda$-module corresponding to the vertex
	$(0,t)$ with $\rd(X) = r$. Then $X$ is maximal $r$-orthogonal if and only if $m=7$, $t=1$, $u=9a+5$ for some non-negative integer $a$ and $r=119a+66$. In this case, $\rigdim\Lambda=119a+68$.
\end{Theo}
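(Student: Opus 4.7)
The plan is to follow the template established in the proofs of Theorems \ref{theo-max-orth-typeA} and \ref{theo-max-orth-typeA2}. By \cite[Theorem 4.2.2]{Iyama2005c}, $\Lambda\oplus X$ is maximal $r$-orthogonal if and only if the orbit $G(0,t)$ is a maximal $r$-orthogonal subset of $\mathbb{Z}E_m$, that is,
$$\mathbb{Z}E_m\setminus G(0,t)=\bigcup_{v\in G(0,t),\, 0<i\leq r}H^+(\omega^i v).$$
The proof reduces to a purely combinatorial check on $\mathbb{Z}E_m$.

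First I would extract the necessary $\tau\omega^r$-stability condition: $\tau\omega^r(0,t)\in G(0,t)$, which, combined with $r=\rd(X)$ as given by Theorem \ref{theo-typeE}, forces a tight congruence between $r$, $u$, and the type. Going through each block of Table \ref{tab-typeE} for $E_6$, $E_7$, $E_8$ (and $s=1,2$), I expect the stability condition to exclude most rows outright. For instance, in all rows where $r$ equals $\Fb_1$ (or $\Fb_1-1$), the orbit $G(0,t)$ is too small compared with the ambient quiver for any covering to be possible. This stage narrows the candidates down to a handful of specific residue classes of $u$ modulo $h_\Delta^*$.

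Second, for each surviving candidate, I would apply a counting argument. The cardinality $|\mathbb{Z}E_m/G|$ equals $mn$ (with one orbit removed), while $\sum_{i=1}^{r}|H^+(\omega^i(0,t))|$ admits an explicit upper bound in terms of the ``rectangle'' sizes appearing in the description of $H^+$ in $\mathbb{Z}E_m$ (analogous to the picture used in the $D$-case). Requiring the union to cover the complement of $G(0,t)$ yields a sharp lower bound on $r$, and matching it against the formula for $\rd(X)$ in Table \ref{tab-typeE} eliminates every remaining case except $(E_7,u,1)$ with $t=1$ and $u\equiv 5\pmod 9$.

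Third, I would verify the lucky case directly. Write $u=9a+5$ and $n=17u=153a+85$. The Euclidean algorithm for $\wseq{9,n}$ gives
$$n=(17a+9)\cdot 9+4,\quad 9=2\cdot 4+1,\quad 4=4\cdot 1,$$
so $|\kk|=3$, $\Fb_1(\kk)=17a+9$, $\Fb_2(\kk)=2(17a+9)+1=34a+19$, and hence the rigidity degree from Table \ref{tab-typeE} is
$$r=\Fb_1(\kk)+3\Fb_2(\kk)=(17a+9)+3(34a+19)=119a+66,$$
which matches the claim. I would then check explicitly that the sets $H^+(\omega^i(0,1))$, $1\leq i\leq 119a+66$, partition $\mathbb{Z}E_7\setminus G(0,1)$; this is a geometric verification in $\mathbb{Z}E_7$ using the explicit form of $\omega$ and $H^+$ recalled in Section~6, combined with the remainder calculations in Proposition \ref{prop-rm-range}. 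Finally, maximality of $r$ among rigidity degrees of indecomposables (Theorem \ref{theo-typeE}) together with the discussion preceding Theorem \ref{theo-max-orth-typeA} gives $\rigdim\Lambda=r+2=119a+68$.

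The main obstacle will be the bookkeeping in the second step: ruling out the $E_6$ and $E_8$ cases and the remaining $E_7$ residue classes requires handling many subcases, each involving the precise shape of $H^+$ on $\mathbb{Z}E_m$ and the interplay between the coefficients $\lambda_i$ expressing $r$ in the weighted Fibonacci basis and the cover-by-orbits counting. The verification in the $(E_7,u,1,t=1,u\equiv 5\pmod 9)$ case is a finite but delicate check that the covering is tight rather than merely achievable.
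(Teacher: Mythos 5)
Your proposal follows essentially the same route as the paper: reduce via Iyama's criterion to the covering condition on $\mathbb{Z}E_m$, use $\tau\omega^r$-stability together with Table \ref{tab-typeE} to cut the candidates down to a short list, then verify directly that only $(E_7,u,1)$ with $t=1$, $u\equiv 5\pmod 9$ survives; your weight-sequence computation $k_1=17a+9$, $k_2=2$, $k_3=4$ and hence $r=\Fb_1+3\Fb_2=119a+66$ matches the paper exactly, as does the conclusion $\rigdim\Lambda=r+2$. The only divergence is that you propose a cardinality/counting bound to eliminate the surviving candidates, whereas the paper simply checks each leftover vertex orbit against $\bigcup_{1\leq i\leq r}H^+(\omega^i(0,t))$ directly — a minor methodological difference that does not change the argument.
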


\begin{proof}
	 The idea is similar to the proofs of Theorem \ref{theo-max-orth-typeA} and Theorem \ref{theo-max-orth-typeA2}. For $r\leq \Fb_1$, it is easy to find a vertex $(x,y)$ such that $G(x,y)\cap H^+(\omega^i(0,t))=\emptyset$ for all $1\leq i\leq r$ and thus $G(0,t)$ is not maximal $r$-orthogonal. Now assume that $r>\Fb_1$. By Theorem \ref{theo-typeE}, the cases where $G(0,t)$ is $\tau\omega^r$-stable are as follows.

	Type $(E_7, u, 1)$:
	
	\begin{itemize}
		\item $[u]_9 = 2$, $t = 6$, $r = \Fb_1 +3\Fb_2$;
		\item $[u]_9 = 4$, $t = 6$, $r = \Fb_1 +\Fb_2$;
		\item $[u]_9 = 5$, $t = 1$, $r = \Fb_1 +3\Fb_2$;
		\item  $[u]_9 = 7$, $t = 1$, $r = \Fb_1 +\Fb_2$;
	\end{itemize}
	
	Type $(E_8, u, 1)$:
	
	\begin{itemize}
		\item $[u]_{15} = 2$, $t = 1$, $r = \Fb_3$;
		\item $[u]_{15} = 7$, $t = 1,2,7$, $r = \Fb_1 +\Fb_2$;
		\item $[u]_{15} = 11$, $t = 1,2$, $r = \Fb_1 +\Fb_2$;
	\end{itemize}

	Type $(E_6, u, s)$:
\begin{itemize}
	\item $[u]_6=1, t=1,5$, $\lfloor u/6\rfloor-s$ is even, $r=\Fb_1+4\Fb_2$.
\end{itemize}
Finally, let $M=G(0, t)$, we directly check for each $(x,y)\notin M$, whether $$G(x, y)\bigcap \left(\bigcup_{1\leq i\leq r}H^+(\omega^i(0,t))\right)\neq\emptyset.$$
The only survivor is the case that $\Lambda$ is of type $(E_7, u, 1)$, $[u]_9 = 5$, $t = 1$, $r = \Fb_1 +3\Fb_2$. In this case, suppose that $u=9a+5$, the weight sequence $\wseq{9, 17u}$ is $k_1=17a+9$, $k_2=2$ and $k_3=4$. Thus $r=\Fb_1+3\Fb_2=119a+66$. Note that $\rd_G(1)>\rd_G(y)$ for all $y\neq 1$ in this case. Therefore $\rigdim\Lambda=r+2=119a+68$.
\end{proof}

\noindent
{\bf Example.} Suppose that $\Lambda$ is the self-injective Nakayama algebra with $17$ simple modules and Loewy length $9$. By the Euclidean algorithm, we have $9=0\times 17+9$, $17=1\times 9+8$, $9=1\times 8+1, 8=8\times 1$. That is, the remainder sequence is $s_{1}=9,s_2=8, s_3=1, s_4=0$, and the weight sequence is $k_1=1, k_2=1, k_3=8$. The corresponding weighted Fibonacci sequence is $\Fb_0=1, \Fb_1=1, \Fb_2=2,\Fb_3=17$. By Theorem \ref{theo-typeA}, we get $\rd_G(1)=2(\Fb_3-\Fb_2)=30$ and $\rd_G(t)=2\Fb_2-1=3$ for all $2\leq t\leq \min\{m/2, s_2\}$. That is, $\rd_G(t)=3$ for $t=2,3,4$. By symmetry, one has $\rd_G(8)=30$ and $\rd_G(t)=3$ for $t=5,6,7$. Let $S$ be a simple $\Lambda$-module. By Theorem \ref{theo-max-orth-typeA}, $\Lambda\oplus S$ is a maximal $30$-orthogonal module and $\rigdim \Lambda=32$.

\medskip
\noindent
{\bf Acknowledgement.} The second author would like to thank China Scholarship Council for supporting her study at the University of Stuttgart and also wish to
thank the representation theory group in Stuttgart for hospitality. The research work was partially supported by NSFC (12031014).

\bigskip
{Wei Hu, School of Mathematical Sciences,  Laboratory of Mathematics and Complex Systems, MOE, Beijing Normal University, 100875 Beijing, People's Republic of  China.

 {\tt Email: huwei@bnu.edu.cn}}

 \medskip
 \bigskip
 {Xiaojuan Yin, School of Mathematical Sciences,  Laboratory of Mathematics and Complex Systems, MOE,  Beijing Normal University, 100875 Beijing, People's Republic of  China.

  {\tt Email: xiaojuan\_yin11@163.com}}
\end{document}